\DeclareMathAlphabet{\mathpzc}{OT1}{pzc}{m}{it}
\newtheorem{theorem}{Theorem}[section]
\newtheorem{thm}[equation]{Theorem}
\newtheorem{Q}[equation]{Question}
\newtheorem{prop}[equation]{Proposition}
\newtheorem{cor}[equation]{Corollary}
\newtheorem{lem}[equation]{Lemma}
\newtheorem{dfn}[equation]{Definition}
\theoremstyle{remark}
\newtheorem{rmk}[theorem]{Remark}
\newtheorem{Rmk}[theorem]{Remark}
\numberwithin{equation}{section}
\numberwithin{equation}{section}
\newcommand{\be}{begin{equation}}
\newcommand{\dw}{d_{\mathsf w}}
\newcommand{\e}{\epsilon}
\renewcommand{\c}{\mathbb{C}}
\newcommand{\br}{\mathbb{R}}
\newcommand{{\grinv}}{{\Cal G}^{-r}}
\newcommand{\ba}{\backslash}
\newcommand{\G}{\Gamma}
\newcommand{\Cal}{\mathcal}
\newcommand{\la}{\langle}
\newcommand{\bp}{\begin{pmatrix}}
\newcommand{\ep}{\end{pmatrix}}
\renewcommand{\be}{\begin{equation}}
\newcommand{\ee}{\end{equation}}
\renewcommand{\bp}{{\rm bp}}
\newcommand{\SO}{\operatorname{SO}}
\renewcommand{\L}{\Cal L}
\newcommand{\PSL}{\op{PSL}}
\newcommand{\norm}[1]{\lVert #1 \rVert}
\newcommand{\op}{\operatorname}
\newcommand{\BMS}{\operatorname{BMS}}
\newcommand{\rank}{\operatorname{rank}}
\newcommand{\Om}{\Omega}
\def\om{\omega}
\newcommand{\ga}{\gamma}
\newcommand{\F}{\mathcal F}
\newcommand{\La}{\Lambda}
\renewcommand{\i}{\op{i}}
\def\e{\mathrm{e}}
\def\i{\mathrm{i}}
\def\dim{\operatorname{dim}}
\def\SO{\operatorname{SO}}
\def\PSL{\operatorname{PSL}}
\newcommand{\pc}{P^{\circ}}
\newcommand{\fg}{\mathfrak{g}}
\newcommand{\fa}{\mathfrak{a}}
\newcommand{\Ga}{\Gamma}
\newcommand{\cal}{\mathcal}
\renewcommand{\e}{\varepsilon}
\renewcommand{\epsilon}{\e}
\newcommand{\inte}{\op{int}}
\renewcommand{\la}{\lambda}
\newcommand{\m}{\mathsf m}
\newcommand{\E}{\mathcal E}
\begin{document}

\title[Dichotomy and measures]{Dichotomy and  measures
on limit sets of Anosov groups.}

\author{Minju Lee and Hee Oh}
\address{School of mathematics, Institute for Advanced Study, Princeton, NJ 08540, current address: Mathematics department, University of Chicago, Chicago, IL, USA}
\address{Mathematics department, Yale university, New Haven, CT 06520 and Korea Institute for Advanced Study, Seoul, Korea}
\address{}

\email{minju1@uchicago.edu}
\email{hee.oh@yale.edu}
\thanks{Lee and Oh are respectively partially supported by 
 the NSF grant No. DMS-1926686 (via the
Institute for Advanced Study), and the NSF grant No. DMS-1900101.}

\begin{abstract}  Let $G$ be a connected semisimple real algebraic group. For a Zariski dense Anosov subgroup $\Ga<G$,
we show that a $\Ga$-conformal measure is supported on the limit set of $\Ga$ if and only if its {\it dimension} is $\Ga$-critical. This implies the uniqueness of a $\G$-conformal measure for each critical dimension, answering the question posed in our earlier paper with Edwards \cite{ELO2}.
We obtain this by proving a higher rank analogue of the Hopf-Tsuji-Sullivan dichotomy for the {\it maximal diagonal} action. Other applications include an analogue of the Ahlfors measure conjecture for Anosov subgroups. \end{abstract}

\maketitle

\section{Introduction}
Let $G$ be a connected semisimple real algebraic group. In this paper, we investigate properties of
$\Gamma$-conformal measures on the Furstenberg boundary of $G$ for a certain class of 
discrete subgroups $\Gamma$ of $G$, called {\it Anosov} subgroups.
Associated to each conformal measure 
does there exist a linear form
on the Cartan subspace of the Lie algebra of $G$, which may be regarded as the {\it dimension} of the measure. We show that a $\Ga$-conformal measure is supported on the limit set
of $\G$ if and only if this dimension is $\Ga$-critical. We deduce  this result from a higher rank analogue of the Hopf-Tsuji-Sullivan dichotomy for the maximal diagonal action, which relates the supports of conformal measures, critical exponents of Poincare series, and
the dynamical properties of the action of a maximal diagonal subgroup
on $\Ga\ba G$ relative to higher rank generalizations of Bowen-Margulis-Sullivan measures. Applications include an analogue of the Ahlfors measure conjecture for Anosov subgroups of $G$.

\medskip 

To state our main results precisely, we let  $P=MAN$ be a minimal parabolic subgroup of $G$ with a fixed Langlands decomposition, where  $A$ is a maximal real split torus of $G$, $M$
is the maximal compact subgroup centralizing
$A$ and $N$ is the unipotent radical of $P$. Let $\fg=\op{Lie} G$, $\fa=\op{Lie} A$ and $\fa^+$ denote the positive Weyl chamber so that $\log N$ consists of positive root subspaces. Let $K$ be a maximal compact subgroup so that the Cartan decomposition $G=K(\exp \fa^+) K $ holds.
Let $\mu:G\to \fa^+$ denote the Cartan projection map defined by the condition
$\exp \mu(g)\in KgK$ for all $g\in G$.

A finitely generated discrete subgroup $\Ga<G$ is called an \textit{Anosov subgroup} (with respect to $P$) if  there exist constants $C,C'>0$ such that for all $\ga\in\Ga$ and all simple root $\alpha$ of $(\fg, \fa)$,
$$
\alpha(\mu(\ga))\ge C|\ga|-C'
$$
 where $|\ga|$ denotes the word length of $\ga$ with respect to a fixed finite symmetric set of generators of $\Ga$.
 The notion of Anosov subgroups was first introduced by Labourie for surface groups \cite{La}, and was extended to general word hyperbolic groups by Guichard-Wienhard \cite{GW}.
Several equivalent characterizations have been  established, one of which is the above definition (see \cite{GGKW}, \cite{KL}, \cite{KLP1}, \cite{KLP2}). Anosov subgroups are regarded as natural generalizations of convex cocompact subgroups of rank one groups.


\subsection*{Uniqueness of conformal measures} We set $\F:=G/P$ which is the Furstenberg boundary of $G$. Let $\G<G$ be a Zariski dense discrete subgroup.
A Borel probability measure $\nu$ on $\F$ is called a $\Gamma$-conformal measure if, 
there exists a linear form $\psi\in \fa^*$ such that
for any $\ga\in \G$ and $\xi\in \F$,
 \be\label{gc0} \frac{d\ga_* \nu}{d\nu}(\xi) =e^{\psi (\beta_\xi (e, \gamma))}\ee
where $\beta$ denotes the $\frak a$-valued Busemann function defined in Def. \eqref{Bu}. 
We call $\nu$ a $(\Ga, \psi)$-conformal measure and $\psi$ the  dimension of $\nu$. Although $\psi$ is a linear form instead of a number, we find it convenient to treat it as a sort of dimension of the measure $\nu$ and hence the name.

If $\rho$ denotes the half sum of all positive roots of $(\fg, \fa)$,
 the $K$-invariant probability measure on $\F$ (the Lebesgue measure) is  the unique $G$-conformal measure of dimension $2\rho$ \cite{Q7}.

We let $\psi_\Ga:\fa\to \br\cup \{-\infty\}$ denote the growth indicator function of $\Ga$ (see Def. \eqref{def.GI}). Let $\L\subset \fa^+$ denote the limit cone of $\Ga$, which is the asymptotic cone of the Cartan projection of $\Ga$.

We mention that the dimension of a $\G$-conformal measure is always bounded below by $\psi_\Ga$ \cite{Q}.
We call a linear form $\psi\in \fa^*$ {\it $\G$-critical}, or simply, critical,
if it is tangent to $\psi_\Ga$, i.e., 
$$\psi\ge \psi_\Ga\quad\text{ and }\quad  \psi(u)=\psi_\Ga(u)\;\; \text{ for some  $u\in \mathcal L \cap  \op{int} \fa^+$} .$$
When $G$ has rank one, $\psi_\Ga$ is simply the critical exponent $\delta$ of $\Ga$ and hence a critical linear form is just given by $\delta$.
Note that the dimension $\psi$ of a $\G$-conformal measure is either critical
or $\psi>\psi_\Ga$.

We denote 
by $\La$ the limit set of $\Ga$, which is the unique $\Ga$-minimal subset of $\F$. For each $\G$-critical 
dimension
$\psi\in \fa^*$, Quint  constructed a $(\Ga, \psi)$-conformal measure supported on the limit set $\La$, following the approach of Patterson and Sullivan (\cite{Pa}, \cite{Su}, \cite{Q}). 
Moreover, for any Anosov subgroup of {\it the second kind} (see \cite[Def. 5.1]{EO}), a $(\Ga, \psi)$-conformal measure exists for any dimension $\psi \ge \max (\psi_\Ga, \rho)$ by \cite[Cor. 5.3]{EO}.

Our first theorem gives a criterion on the support of a conformal measure in terms of its dimension. This generalizes Sullivan's theorem \cite{Su} that for $\Ga<\SO(n,1)$ convex cocompact,
any $\Ga$-conformal measure
of dimension equal to the critical exponent is necessarily supported on the limit set.
\begin{thm} \label{main}  
Let $\G<G$ be a Zariski dense Anosov subgroup. 
For any $\Ga$-conformal measure $\la$ on $\F$,
we have 
\begin{equation*} \la(\La)= \begin{cases} 1 & \text{if its dimension is $\Ga$-critical }\\
0 &\text{ otherwise}.
\end{cases}
\end{equation*}
In particular, for each $\G$-critical linear form $\psi\in \fa^*$, there exists a unique $\Ga$-conformal measure on $\F$ with dimension $\psi$.
\end{thm}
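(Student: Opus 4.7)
The plan is to obtain both assertions of Theorem \ref{main} from a higher-rank Hopf-Tsuji-Sullivan (HTS) dichotomy for the maximal diagonal action of $A$ on $\Gamma\backslash G$. Given any $(\Gamma,\psi)$-conformal measure $\lambda$ on $\mathcal{F}$, I would first produce a companion conformal measure $\nu$ on the opposite Furstenberg boundary $\check{\mathcal{F}}$ of dimension $\psi\circ\iota$ (with $\iota$ the opposition involution), either via a Patterson-Sullivan construction based at $\psi\circ\iota$, or, for Anosov $\Gamma$ of the second kind, by appealing to \cite{EO}. The pair $(\lambda,\nu)$ then assembles into an $A$-invariant locally finite BMS-type Radon measure $m_\lambda$ on $\Gamma\backslash G/M$ via the Hopf parametrization $G/M\simeq\mathcal{F}^{(2)}\times\mathfrak{a}$, whose Radon-Nikodym density with respect to $d\lambda\,d\nu\,db$ is $\exp\bigl(\psi(\beta_{\xi}(e,\cdot))+\psi\circ\iota(\beta_{\eta}(e,\cdot))\bigr)$.

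The heart of the argument is the equivalence, for Zariski dense Anosov $\Gamma$ and a $(\Gamma,\psi)$-conformal $\lambda$, of the following three conditions:
\begin{enumerate}
\item[(a)] $\lambda$ charges the limit set $\Lambda$;
\item[(b)] the Poincar\'e series $\mathcal{P}_\Gamma(\psi):=\sum_{\gamma\in\Gamma}e^{-\psi(\mu(\gamma))}$ diverges;
\item[(c)] the $A$-action on $(\Gamma\backslash G,m_\lambda)$ is conservative and ergodic.
\end{enumerate}
The equivalence (a)$\Leftrightarrow$(b) should follow from a uniform higher-rank shadow lemma: the Anosov hypothesis forces $\mu(\gamma)$ to lie uniformly deep inside the positive Weyl chamber, so the $\lambda$-mass of the shadow of a fixed ball at $\gamma o$ is comparable to $e^{-\psi(\mu(\gamma))}$; a Borel-Cantelli argument on a shadow cover of the radial limit set then yields the dichotomy. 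The equivalence with (c) is the genuine HTS step: conservativity is restated, via a Halmos recurrence criterion, as divergence of an orbital sum equivalent to $\mathcal{P}_\Gamma(\psi)$; ergodicity is then deduced from conservativity via a higher-rank Hopf ratio argument applied in a generic direction $v\in\mathcal{L}\cap\op{int}\mathfrak{a}^+$ where $\psi$ is tangent to $\psi_\Gamma$.

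Granting the dichotomy, Theorem \ref{main} follows. For $\Gamma$-critical $\psi$, the divergence of $\mathcal{P}_\Gamma(\psi)$ is the divergence-type property of Anosov groups (a consequence of the thermodynamic formalism for the symbolic coding of the Anosov flow, cf.\ \cite{ELO2}), yielding $\lambda(\Lambda)=1$; for $\psi$ strictly exceeding $\psi_\Gamma$ on $\mathcal{L}\cap\op{int}\mathfrak{a}^+$, convergence of $\mathcal{P}_\Gamma(\psi)$ is standard, forcing $\lambda(\Lambda)=0$. Uniqueness at a critical $\psi$ is then extracted from the ergodicity part of (c): given two $(\Gamma,\psi)$-conformal $\lambda_1,\lambda_2$, both are absolutely continuous with respect to their average $\lambda=\tfrac12(\lambda_1+\lambda_2)$, and the $\Gamma$-invariance of the density $d\lambda_i/d\lambda$ on $\mathcal{F}$ lifts to an $A$-invariant function on $(\Gamma\backslash G/M,m_\lambda)$; ergodicity forces this density to be constant, hence $\lambda_1=\lambda_2$.

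The main obstacle I expect is the conservativity-to-ergodicity step in (c). In rank one this goes through Hopf's ratio ergodic theorem for a single-parameter geodesic flow, and no such theorem is directly available for the multi-parameter group $A$. My approach would be to reduce to a carefully chosen one-parameter subgroup $\{\exp tv\}$ with $v$ interior to the limit cone at a point where $\psi$ achieves its minimum on a level set, prove mixing of $m_\lambda$ under $\exp tv$ using Anosov regularity together with boundary mixing, and then promote directional ergodicity to full $A$-ergodicity via the product structure of $m_\lambda$ on $\mathcal{F}^{(2)}\times\mathfrak{a}$. A secondary hurdle is establishing the shadow lemma with the uniform comparison constants required for the Borel-Cantelli step; here again the Anosov hypothesis is essential, as it supplies a uniform gap between $\mu(\gamma)$ and the walls of $\mathfrak{a}^+$.
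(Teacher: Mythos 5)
Your overall architecture matches the paper's: both deduce the theorem from a higher-rank Hopf--Tsuji--Sullivan dichotomy for the $A$-action, and your derivation of uniqueness from ergodicity of the diagonal $\Gamma$-action is a legitimate variant of the paper's appeal to the known uniqueness of conformal measures supported on $\Lambda$. However, there are two genuine gaps. First, the crucial implication --- divergence of $\sum_{\gamma}e^{-\psi(\mu(\gamma))}$ implies $\lambda(\Lambda_c)>0$ for an \emph{arbitrary} $(\Gamma,\psi)$-conformal measure $\lambda$ --- cannot be obtained by ``a Borel--Cantelli argument on a shadow cover'': the first Borel--Cantelli lemma only yields the convergence direction ($\lambda(\Lambda_c)=0$). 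The divergence direction requires a second-moment (quasi-independence) estimate, namely bounding the correlations $\mathsf m(P_{a_1}\cap P_{a_2})$ of the return events $P_a=\Gamma(Q_r\cap\Gamma Q_r a^{-1})$, $a\in A^+$, by the square of $\sum_{\|\mu(\gamma)\|\le T}e^{-\psi(\mu(\gamma))}$, and this is where the entire new technical content of the paper lies (Sections 3--5). That estimate rests on a uniform multiplicity bound for shadows in spherical shells, which in turn uses the specifically Anosov fact that if $Q\cap\gamma_1Qa_1^{-1}\cap\gamma_2Qa_2^{-1}\ne\emptyset$ with $a_1,a_2\in A^+$, then $\mu(\gamma_2)\approx\mu(\gamma_1)+\mu(\gamma_1^{-1}\gamma_2)$ or the symmetric statement holds --- a consequence of antipodality and of the properness of $xwA^+w^{-1}$ for Weyl elements $w\ne e,w_0$. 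Your proposal contains no substitute for this quasi-independence input; the ``uniform comparison constants in the shadow lemma'' that you flag as the secondary hurdle are already available from earlier work and are not the obstacle.

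Second, your route to ergodicity --- reduce to a one-parameter subgroup $\exp(tv)$ with $v$ interior to the limit cone, prove mixing there, and promote directional ergodicity to $A$-ergodicity --- is precisely the strategy that fails in general rank: when $\operatorname{rank} G>3$, the $v$-directional conical limit set is null for every conformal measure, so the flow $\exp(tv)$ is completely dissipative and non-ergodic with respect to the BMS measure, and there is no directional ergodicity to promote. This is exactly the point of Remark \ref{rank3}, which explains why the one-dimensional dichotomy of \cite{BLLO} does not suffice beyond rank $3$. The paper sidesteps this entirely: the $A$-ergodicity of the BMS measure in the critical case is imported from prior work of Sambarino and Lee--Oh, and the new correlation estimates are used only to establish conservativity, i.e.\ positivity of $\lambda(\Lambda_c)$.
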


The second part
follows from the first part together with the result in \cite{LO}, 
which showed that there exists a unique $\Ga$ supported measure {\it supported  on $\La$} for each critical dimension. These results together also imply that
the space of all $\G$-conformal measures on $\F$ is homeomorphic to the space of directions in the interior of the limit cone of $\G$. It also follows from \cite[Thm. 10.20]{LO} that conformal measures of distinct critical dimensions are mutually singular to each other. 
The study of $\Ga$-conformal measures is directly related to the study of
positive joint eigenfunctions on the associated locally symmetric manifold $\Ga\ba G/K$
 for the ring of $G$-invariant differential operators (\cite{Su2}, \cite{EO}).

\begin{Rmk} When the rank of $G$ is at most $3$, it was proved in \cite{ELO2}
that any conformal measure of critical dimension is supported on $\La$, and the general case was posed as an open problem there (see Remark \ref{rank3}).
\end{Rmk}

\subsection*{Analogue of the Ahlfors measure conjecture} The Ahlfors measure conjecture \cite{Ah} says that the limit set of
a finitely generated discrete subgroup of $\PSL_2(\c)$ is either
$\mathbb S^2$ or has Lebesgue measure zero; this is now a theorem following from the works of Agol \cite{Ag}, Calegari-Gabai \cite{CG} and Canary \cite{Ca}.
The following theorem is analogous to the case of
Ahlfors' conjecture proved by Ahlfors himself
for convex cocompact subgroups \cite{Ah}. We denote by $\op{Leb}$ the Lebesgue measure on $\F$.
\begin{thm}\label{null} For any Zariski dense Anosov subgroup $\Ga< G$, 
we have either
$$\La=\F\quad\text{ or }\quad \op{Leb}(\La)=0.$$

In the former case, $\rank (G)=1$ and $\Ga$ is cocompact in $G$. 
\end{thm}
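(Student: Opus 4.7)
\emph{Plan.} The strategy is to apply Theorem~\ref{main} directly to the Lebesgue measure on $\F$, and then to analyze the equality case using the BMS construction that underlies the higher rank Hopf-Tsuji-Sullivan dichotomy established elsewhere in the paper.

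\emph{Step 1: the dichotomy.} The $K$-invariant probability measure $\op{Leb}$ on $\F$ is, as noted just after \eqref{gc0}, the unique $G$-conformal measure of dimension $2\rho$; in particular, restricting the $G$-equivariance to $\Ga$ realizes it as a $(\Ga,2\rho)$-conformal measure. Theorem~\ref{main} applied to it yields the alternative $\op{Leb}(\La)\in\{0,1\}$, with value $1$ iff $2\rho$ is $\Ga$-critical. Since $\La$ is closed in $\F$ and $\op{Leb}$ is strictly positive on every nonempty open subset of $\F$, the equality $\op{Leb}(\La)=1$ forces $\La=\F$: otherwise $\F\setminus\La$ would be a nonempty open set of positive Lebesgue measure. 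This yields the stated dichotomy $\La=\F$ or $\op{Leb}(\La)=0$.

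\emph{Step 2: the equality case.} Suppose $\La=\F$. Then $2\rho$ is $\Ga$-critical and, by the uniqueness clause of Theorem~\ref{main}, the unique $(\Ga,2\rho)$-conformal measure is $\op{Leb}$ itself. The higher rank Hopf-Tsuji-Sullivan dichotomy attaches to each critical conformal measure on $\F$ a BMS-type measure on $\Ga\ba G$, constructed locally as the product of two conformal factors on $\F^{(2)}$ and a Haar factor on $MA$. Feeding $\op{Leb}$ into both conformal slots reproduces, up to a positive scalar, the $G$-invariant Haar measure on $\Ga\ba G$; finiteness of this BMS measure on the conservative side of the dichotomy then gives $\vol(\Ga\ba G)<\infty$, i.e., $\Ga$ is a lattice in $G$. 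Since Anosov subgroups are word hyperbolic while any lattice in a connected semisimple real algebraic group of real rank at least $2$ contains a copy of $\bz^2$ (coming from a split torus of rank $\ge 2$), we must have $\rank G=1$. In rank one, the Anosov condition coincides with convex cocompactness, so $\Ga$ acts cocompactly on the convex hull of $\La$ in the symmetric space $G/K$; when $\La=\F$ this convex hull is all of $G/K$, and $\Ga$ is therefore cocompact in $G$.

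\emph{Main obstacle.} The measure-theoretic dichotomy is essentially immediate from Theorem~\ref{main}. The substantive point lies in the equality case, namely identifying the BMS measure attached to $(\op{Leb},\op{Leb})$ at dimension $2\rho$ with the $G$-invariant Haar measure on $\Ga\ba G$, and upgrading the finiteness of the BMS measure to finiteness of Haar volume. This uses the explicit local product description of BMS measures furnished by the proof of the higher rank Hopf-Tsuji-Sullivan dichotomy developed in the body of the paper; the final step, ruling out Anosov lattices in higher rank and non-cocompact Anosov lattices in rank one, is then standard.
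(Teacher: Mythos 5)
Your Step 1 is fine and is in fact a cleaner route to the dichotomy than the paper's: applying Theorem~\ref{main} to $\op{Leb}$ viewed as a $(\Ga,2\rho)$-conformal measure gives $\op{Leb}(\La)\in\{0,1\}$, and full support of $\op{Leb}$ plus closedness of $\La$ upgrades $\op{Leb}(\La)=1$ to $\La=\F$. The rank one endgame (convex hull of $\La=\F$ is all of $G/K$, hence cocompactness) is also fine. The problem is the bridge in Step~2 from ``$2\rho$ is $\Ga$-critical'' to ``$\vol(\Ga\ba G)<\infty$''. You write that ``finiteness of this BMS measure on the conservative side of the dichotomy then gives $\vol(\Ga\ba G)<\infty$'', but Theorem~\ref{mp} asserts only that the BMS measure $\m_{\la_{2\rho},\la_{2\rho}}$ (which is indeed Haar, up to scalar) is \emph{ergodic and completely conservative} for the $A$-action on the divergence side; it says nothing about finiteness, and conservativity of an invariant Radon measure does not imply its finiteness (already in rank one, divergence-type groups of infinite covolume have conservative, ergodic, infinite invariant measures). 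So your argument never actually excludes the possibility that $2\rho$ is critical for a higher rank Anosov group of infinite covolume, which is exactly the substantive point.

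The paper closes this gap with two inputs you do not use. For factors of rank at least $2$ it invokes Theorem~\ref{inf}: if $\Ga\ba G$ has infinite volume (which holds for Anosov $\Ga$ in higher rank, since higher rank lattices contain $\bz^2$ and are not word hyperbolic), then Oh's uniform matrix coefficient bounds for $L^2(\Ga\ba G)$ force the quantitative gap $\psi_\Ga\le 2\rho-\Theta<2\rho$, so $2\rho$ is never critical and Theorem~\ref{mp} gives $\op{Leb}(\La)=0$. For products of rank one factors it uses Hausdorff dimension: Theorem~\ref{one} (Corlette--Iozzi, via the bottom of the $L^2$-spectrum) for a single non-cocompact convex cocompact factor, and a dimension comparison for self-joinings when there are several factors. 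To repair your proof you would need to replace the ``conservative $\Rightarrow$ finite'' step by one of these mechanisms; as written, the equality case is not established.
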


\medskip

\subsection*{Higher rank analogue of the Hopf-Tsuji-Sullivan dichotomy} Both theorems are deduced from a higher rank analogue of the Hopf-Tsuji-Sullivan dichotomy for the action of the maximal diagonal subgroup $A$. To state this dichotomy, we need to introduce some notations first.
Letting $\F^{(2)}$ denote the unique open diagonal $G$-orbit in $\cal F\times \cal F$, the quotient space $G/M$ is homeomorphic to
$\F^{(2)}\times \fa$ via the Hopf parameterization.  The notation $\i$ denotes the opposition involution of $\fa$, and let $db$ denote the Lebesgue measure on $\fa$.
For a given pair of $\G$-conformal measures $\la_{\psi}$ and $\la_{\psi\circ \i}$ on $\F$ with respect to $\psi$ and
$\psi\circ \i$ respectively,  one can use the Hopf parameterization
to define a non-zero $A$-invariant Borel measure $\m_{\la_{\psi}, \la_{\psi\circ {\i}}}$ on the quotient space $\G\ba G/M$, which is locally equivalent to $d\la_{\psi}\otimes d\la_{\psi\circ {\i}}\otimes db$ in the Hopf coordinates.
We will call it the Bowen-Margulis-Sullivan measure (or simply 
$\BMS$ 
measure) associated to the pair $(\la_\psi, \la_{\psi\circ \i})$. 
Each BMS measure $\m_{\la_{\psi}, \la_{\psi\circ {\i}}}$ on $\Ga\ba G/M$ can be considered as an $AM$-invariant measure on $\Ga\ba G$, for which we will use the same notation. For example, for $\psi=2\rho=\psi\circ \i$, the corresponding measure $\m_{\la_{2\rho}, \la_{2\rho}}$ is a $G$-invariant  measure on $\Ga\ba G$.

The conical limit set of $\Ga$ is defined as
$$\La_c=\{gP\in  \cal F: \text{$gA^+$ accumulates on $\Ga\ba G$}\},$$
in other words, $\La_c=\{gP\in  \cal F: \limsup \Gamma g A^+\ne \emptyset\}$,\footnote{For a sequence $S_n$ of subsets of a topological space $X$, $\limsup S_n$ is defined as the set
of all possible limits $s=\lim_{i\to \infty} s_{n_i}$ in $X$ where $s_{n_i}\in S_{n_i}$ for some infinite sequence $n_i$.
}
where $A^+=\exp \fa^+$. 
For Anosov subgroups, we have 
$$\La=\La_c,$$ as proved in \cite{KLP1} using the Morse property.

For $\psi\in \fa^*$, let $\cal M_\psi$ denote the collection of all $(\Ga, \psi)$-conformal measures. 

\begin{thm}[Dichotomy for the maximal diagonal action] \label{mp} Let $\Ga$ be a Zariski dense Anosov subgroup of $G$.
Let $\psi\in \fa^*$ be such that $\cal M_\psi\ne \emptyset$.  
Then the following are all equivalent to each other:
\begin{enumerate}
    \item $\sum_{\ga\in \Ga}e^{-\psi(\mu(\ga))} =\infty$ $($resp. $\sum_{\ga\in \Ga}e^{-\psi(\mu(\ga))} <\infty)$;
    \item $\psi$ is $\G$-critical (resp. $\psi>\psi_\Ga$);
    \item for any $\la_\psi\in \cal M_\psi$, $\lambda_\psi(\La_c)>0$  $($resp. $\lambda_\psi(\La_c)=0)$;
    \item for any $\la_\psi\in \cal M_\psi$, $\lambda_\psi(\La_c)=1$ $($resp. $\lambda_\psi(\La_c)=0)$;
    \item for any $(\la_\psi, \la_{\psi\circ \i})\in \cal M_\psi\times  \cal M_{\psi\circ\i}$,  the diagonal $\Ga$-action on $(\F^{(2)},  \lambda_\psi\otimes \lambda_{\psi\circ \i} |_{\F^{(2)}})$ is ergodic and completely conservative $($resp. non-ergodic and completely dissipative$)$; 
    \item for any $(\la_\psi, \la_{\psi\circ \i})\in \cal M_\psi\times  \cal M_{\psi\circ\i}$, the $A$-action on $(\Ga\ba G/M,  \m_{\la_\psi, \la_{\psi \circ \i}})$ is ergodic and completely conservative $($resp. non-ergodic and completely dissipative$)$;
    \item for any $(\la_\psi, \la_{\psi\circ \i})\in \cal M_\psi\times  \cal M_{\psi\circ\i}$ and any $\pc$-minimal subset $\E_0$ of $\Ga\ba G$, the $A$-action on $(\cal E_0, \m_{\la_\psi, \la_{\psi \circ \i}}|_{\cal E_0})$ is ergodic and completely conservative (resp. either $\m_{\la_\psi, \la_{\psi \circ \i}}(\cal E_0)=0$, or non-ergodic and completely dissipative$)$.
\end{enumerate}    
  \end{thm}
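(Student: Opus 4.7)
The plan is to prove the theorem by a cycle of implications, treating the ``divergent/critical'' and the ``convergent/supercritical'' cases in parallel. The equivalence (1) $\Leftrightarrow$ (2) amounts to the assertion that Zariski dense Anosov subgroups are of divergence type at every critical linear form: for $\psi>\psi_\Ga$ on the limit cone $\cal L$, convergence follows from the definition of the growth indicator (pick $\psi'$ with $\psi>\psi'>\psi_\Ga$ on $\cal L$ and compare termwise); for critical $\psi$ one uses the uniform counting of $\mu(\Ga)$ in sectors around a critical direction $u\in\cal L\cap\op{int}\fa^+$ with $\psi(u)=\psi_\Ga(u)$, combined with the well-behaved Cartan projection asymptotics available for Anosov subgroups (Sambarino-type estimates).

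For the critical direction, I would first establish everything for the Patterson--Sullivan measure $\nu_\psi$ supplied by Quint's construction, which is supported on $\La$. The shadow lemma for Anosov subgroups (as developed in \cite{LO},\cite{ELO2}) gives $\nu_\psi(O_\ga)\asymp e^{-\psi(\mu(\ga))}$ for shadows of fixed size, so combining with (1) via a Borel--Cantelli argument along orbits yields $\nu_\psi(\La_c)=1$. The crucial passage from $\nu_\psi$ to an arbitrary $\la_\psi\in\cal M_\psi$ is as follows: on $\La_c$, any two $(\Ga,\psi)$-conformal measures are mutually absolutely continuous with Radon--Nikodym derivative that is $\Ga$-invariant (by the conformality relation \eqref{gc0}); using the ergodicity of the $\Ga$-action on $((\La_c\times\La_c)\cap\FF,\,\nu_\psi\otimes\nu_{\psi\circ\i})$ already proved for the Patterson--Sullivan measure, this derivative is constant, so $\la_\psi$ restricted to $\La_c$ is proportional to $\nu_\psi$ restricted to $\La_c$. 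A Hopf-style $0$--$1$ argument, using $\Ga$-invariance of $\La_c$ and quasi-invariance of $\la_\psi$, then forces $\la_\psi(\La_c)\in\{0,1\}$, and absolute continuity rules out the value $0$.

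The implications (4) $\Rightarrow$ (5) $\Rightarrow$ (6) $\Rightarrow$ (7) are obtained by transporting ergodicity through the Hopf parameterization. Complete conservativity of the diagonal $\Ga$-action on $(\FF,\la_\psi\otimes\la_{\psi\circ\i})$ follows immediately from $\la_\psi(\La_c)=1$ and the Borel--Cantelli side of the shadow lemma, while ergodicity is produced by Sullivan's ratio argument (Hopf averages in the $A$-direction) adapted to higher rank. The equivalence (5) $\Leftrightarrow$ (6) is direct from the Hopf parameterization, which identifies $\m_{\la_\psi,\la_{\psi\circ\i}}$ locally with $d\la_\psi\otimes d\la_{\psi\circ\i}\otimes db$ and the $A$-action with translation in the $\fa$-factor that commutes with the diagonal $\Ga$-action. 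The equivalence (6) $\Leftrightarrow$ (7) uses the decomposition of $\Ga\ba G$ into $\pc$-minimal subsets from the authors' earlier work together with the fact that the BMS measure respects this decomposition. Finally, (7) $\Rightarrow$ (1) closes the cycle: conservativity of the $A$-flow on some $\pc$-minimal subset $\cal E_0$ of positive BMS mass translates, via the Hopf parameterization and a Birkhoff-style comparison between orbit returns and the counting function of $\mu(\Ga)$, into divergence of $\sum_\ga e^{-\psi(\mu(\ga))}$.

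The main obstacle is the step (2) $\Rightarrow$ (4) for a \emph{general} $\la_\psi\in\cal M_\psi$, which is not a priori supported on $\La$. One cannot directly invoke uniqueness of the Patterson--Sullivan measure, since that uniqueness is the output, not the input, of this theorem. The plan is instead to bootstrap: prove the dichotomy first for $\nu_\psi$, then compare an arbitrary $\la_\psi$ to $\nu_\psi$ on $\La_c$ via conformality and the already-established ergodicity. In higher rank, both the Hopf averaging argument and the use of the $\pc$-minimal decomposition require more care than in rank one, due to the multi-dimensional $A$-action and the presence of multiple $\pc$-minimal components, but the structural logic follows Sullivan's classical dichotomy.
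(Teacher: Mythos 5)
Your outline reproduces much of the easy part of the theorem (the equivalences $(1)\Leftrightarrow(2)$, $(5)\Leftrightarrow(6)$ via the Hopf parameterization, and the passage to $\pc$-minimal sets), but the step you yourself identify as the main obstacle --- getting statement (4) for an \emph{arbitrary} $\la_\psi\in\cal M_\psi$ rather than for Quint's measure $\nu_\psi$ --- is exactly where your argument breaks down. Your claim that ``on $\La_c$, any two $(\Ga,\psi)$-conformal measures are mutually absolutely continuous with $\Ga$-invariant Radon--Nikodym derivative, by the conformality relation'' is false: the conformality relation \eqref{gc0} only prescribes $d\ga_*\nu/d\nu$ for a single measure $\nu$ and says nothing about the relation between two different measures of the same dimension; two such measures can perfectly well be mutually singular (indeed, ruling this out on $\La$ is the content of the uniqueness theorem you are trying to prove, and for dimensions $\psi>\psi_\Ga$ mutually singular examples genuinely exist). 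The subsequent ``Hopf-style $0$--$1$ argument from quasi-invariance'' also does not work: quasi-invariance alone gives no zero--one law on the $\Ga$-invariant set $\La_c$ without ergodicity of $\la_\psi$ itself, which is again part of what is being proved. Note also that your preliminary step for $\nu_\psi$ is vacuous: since $\La=\La_c$ for Anosov groups (Lemma \ref{lem.RA}(3)) and $\nu_\psi$ is supported on $\La$, the statement $\nu_\psi(\La_c)=1$ is immediate and carries no information about other conformal measures.

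The paper's route around this obstacle is entirely different and is the actual content of the theorem: one runs a second-moment Borel--Cantelli argument (Lemma \ref{lem.AS}) \emph{directly} for the arbitrary measure $\la_\psi$, using the fact that the shadow lemma $\la_\psi(O_S(o,\ga o))\asymp e^{-\psi(\mu(\ga))}$ holds for every $(\Ga,\psi)$-conformal measure, not only for those supported on $\La$. The divergence of the Poincar\'e series then forces $\la_\psi(\La_c)>0$ via the correlation estimates of Proposition \ref{p1}. The essential new ingredient making the second moment controllable is the uniform multiplicity bound on shadows (Lemma \ref{sm}), whose proof rests on the properness property of $xwA^+w^{-1}$ for Weyl elements $w\ne e,w_0$ (Lemmas \ref{proper} and \ref{lem.RS}), a consequence of antipodality of Anosov subgroups. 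Your proposal contains no substitute for this multiplicity bound, and without it the Borel--Cantelli argument you invoke does not close. To repair the proof you would need to abandon the comparison with $\nu_\psi$ and instead carry out the shadow-counting and correlation estimates for the given $\la_\psi$ itself.
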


In the rank one case, the $A$-action on $\Ga\ba G/M$ corresponds to the geodesic flow
on the unit tangent bundle of the locally symmetric manifold $\Ga\ba G/K$. Therefore this theorem generalizes the Hopf-Tsuji-Sullivan dichotomy for the geodesic flow in the rank one case
(\cite{Ts}, \cite{Su}, \cite{Su1}, \cite{Hopf}, \cite{AS}, \cite{CI}, \cite{Ni});
we refer to Roblin's article \cite{Rob} for the most comprehensive exposition.

Theorem \ref{null} is deduced from Theorem \ref{mp} and Theorem \ref{inf} proved by Quint \cite{Q3}, using the matrix coefficient bounds for higher rank simple algebraic groups in \cite{Oh}. This in turn implies that, unless $\Ga\ba G$ is compact, $2\rho$ is not $\Ga$-critical  and hence the Haar measure on $\Ga\ba G$ is non-ergodic for the $AM$-action.

Since there exists a $\Ga$-conformal measure supported on $\La$
for each critical dimension, Theorem \ref{main} immediately follows from Theorem \ref{mp}
together with the uniqueness of $\Ga$-conformal measures supported on $\La$ \cite[Thm. 7.9]{ELO}.

\begin{rmk}\label{rank3} \rm When $\op{rank}G$ is at most $ 3$, 
it was shown in \cite{ELO2} that any $(\Ga,\psi)$-conformal measure is supported on the $u$-directional conical limit set $\La_u$ where $u$ is the unique unit vector $\psi(u)=\psi_\Ga(u)$; this implies Theorem \ref{main}. The proof of this result was based on  the Hopf-Tsuji-Sullivan Dichotomy for {\it one dimensional} diagonal flows $\{\exp (tu):t\in \br\}$ as established in \cite{BLLO}.  When the rank of $G$ exceeds $3$,
{\it directional} conical limit sets have negligible conformal measures, and hence this result of \cite{ELO2} did not prove Theorem \ref{main}. We note that while the dichotomy for one dimensional diagonal flows was obtained for any Zariski dense discrete subgroup,
our proof of Theorem \ref{mp} is heavily based on the hypothesis that $\Ga$ is Anosov.
\end{rmk}

While some of the implications of Theorem \ref{mp} were previously obtained in (\cite{LO}, \cite{LO2}),
the implication $(1)\Rightarrow (3)$ is the main new result of this paper,
which is needed for the application to Theorem \ref{main}.
Fixing  a $(\Ga, \psi)$-conformal measure
$\la_\psi$ for a critical $\psi\in \fa^*$, we consider the generalized Bowen-Margulis-Sullivan measure $\mathsf m=m_{\la_\psi,\la_{\psi\circ \i}}^{\BMS}$ on $\Ga\ba G$
for some conformal measure $\la_{\psi \circ \i}$ of dimension $\psi \circ \i$
 (see \eqref{eq.BMS0} for the definition).  
We use a variant of the Borel-Cantelli lemma for the $A^+$ action (Lemma \ref{lem.AS}) by relating the correlations functions of $\m$
with the Poincare series $\sum_{\ga\in\Ga,
    \norm{\mu(\ga)}\leq T    }e^{-\psi(\mu(\ga))}$.
This requires a control on the multiplicity of certain shadows (Lemma \ref{sm}),
the proof of which uses the following property of Anosov subgroups that for any $x\in \Ga\ba G$, accumulations of an orbit $xA$ in $\Ga\ba G$ can occur only via sequences in
$A^+\cup w_0A^+w_0^{-1}$ where $w_0$ is the longest Weyl element. In other words,
for any other Weyl element $w\ne e, w_0$, the subset $xwA^+w^{-1}$ is a proper embedding of $wA^+w^{-1}$, as was first observed in \cite[Lem. 8.13]{LO}. See
Lemmas \ref{proper} and \ref{lem.RS}. This phenomenon makes this higher rank situation a bit more like a rank one situation where the one dimensional 
subgroup $A$ is simply the union $A^+\cup w_0A^+w_0^{-1}$. Based on this and other properties of Anosov subgroups, we are able to extend the rank one argument in \cite{Rob} to this higher rank Anosov setting.

In a higher rank simple algebraic group, the conical limit set has Lebesgue measure zero for a discrete subgroup of infinite co-volume (see Proposition \ref{ds}).
We end the introduction by the following question:
 \begin{Q}Let $G$ be a connected simple real algebraic group with rank at least $2$ and $\Gamma <G$ be a Zariski dense
 discrete subgroup. Is the following true?:$$\La=\F\quad \text{ if and only if  }\quad\text{$\Gamma$ is a a lattice in $G$}.$$ \end{Q}

We remark that $\La=\F$ is equivalent to the minimality of the $P$-action on $\Ga\ba G$, which means that every $P$-orbit is dense in $\Ga\ba G$. Hence a weaker (still unknown) question than the above is
whether $\Ga$ is necessarily a lattice if the $NM$-action is minimal on $\Ga\ba G$, or equivalently if $\F$ is equal to the set of horospherical limit points  of $\G$, in the sense of \cite{LaO}.
In view of a theorem of Fraczyk and Gelander \cite{FG}, one can also ask whether the infinite injectivity radius of $\Ga\ba G$ implies that $\La$ cannot be all of $\F$ in higher rank setting.

\subsection*{Organization}
In section 2, basic definitions and properties of Anosov subgroups will be recalled.
In section 3, we prove a uniform bound on the multiplicity of
certain shadows, which is a main technical ingredient.
In section 4, we show that if $\sum_{\ga\in \Ga}e^{-\psi(\mu(\ga))}=\infty$,
then for a large compact subset $Q\subset \Ga \ba G$,
the events  $P_a=Q\cap Q a^{-1}$, $a\in A^+$ do not have a strong correlation with respect to the BMS measures of the form
$m_{\la_\psi,\la_{\psi\circ \i}}^{\BMS}$;
this will be used as a main input for the Borel-Cantelli lemma in section 5 to show that any $(\Ga,\psi)$-conformal measure is necessarily supported on the conical limit set $\La_c$.
In section 6, we establish all the equivalences of Theorem \ref{mp}. In section 7, we prove Theorem \ref{null}.

\subsection*{Acknowledgements} We would like to thank Peter Sarnak  and David Fisher for useful comments.

\section{Preliminaries}\label{s1}
Let $G$ be a connected semsimple real algebraic group. We let $P=MAN$, $\fg,\fa,\fa^+$, etc, be as defined in the introduction. 
We fix a maximal compact subgroup $K<G$ so that the Cartan decomposition $G=K(\exp \fa^+) K$ holds. Denote by $\mu:G\to \fa^+$ the Cartan projection, i.e., for $g\in G$, its Cartan projection $\mu(g)\in \fa^+$ is the unique element such that 
\be\label{Car} g\in K\exp \mu(g)K.\ee 
  We fix a norm $\|\cdot\|$ on $\fa$
which is induced from the Killing form on $\fg$.
The quotient space $X=G/K$ is the associated Riemannian symmetric space.
We denote by $d$ the Riemannian distance on $X$ induced by $\|\cdot \|$. We also set $o=[K]\in X$.

 Denote by $w_0\in K$ a representative of the unique element of the Weyl group $\cal W=N_K(A)/M$ such that $\op{Ad}_{w_0}\mathfrak a^+= -\mathfrak a^+$.
  The opposition involution  $\i:\mathfrak a \to \mathfrak a$ is defined by
  $$\i (u)= -\op{Ad}_{w_0} (u) \quad\text{for $u\in \fa$. }$$
   We have $\i(\mu(g))=\mu(g^{-1})$ for all $g\in G$.

  The Furstenberg boundary $\F=G/P$ is isomorphic to $K/M$ as $K$ acts on $\F$ transitively with $K\cap P=M$.
The $\frak a$-valued Busemann function $\beta: \cal F\times G \times G \to\frak a $ is defined as follows: for $\xi\in \cal F$ and $g, h\in G$,
 \be\label{Bu} \beta_\xi ( g, h):=\sigma (g^{-1}, \xi)-\sigma(h^{-1}, \xi)\ee 
where the Iwasawa cocycle $\sigma(g^{-1},\xi)\in \fa$
is defined by the relation $g^{-1}k \in K \exp (\sigma(g^{-1}, \xi)) N$ for $\xi=kP$, $k\in K$.

Let $\Ga<G$ be a Zariski dense discrete subgroup of $G$.
Denote by $\L\subset \fa^+$ the limit cone of $\Gamma$, which is the asymptotic cone of $\mu(\Ga)$, i.e.,
$$\L=\{v\in\frak a^+: v=\lim_{i\to\infty} t_i \mu(\ga_i)\text{ for some }t_i\to 0  \text{ and }\ga_i\to\infty\text{ in } \Ga\}.$$
It is a convex cone with non-empty interior \cite{Ben}.

 The growth indicator function $\psi_{\Gamma}\,:\,\frak a^+ \rightarrow \br \cup\lbrace- \infty\rbrace$  is defined as a homogeneous function, i.e., $\psi_\Gamma (tu)=t\psi_\Gamma (u)$ for all $t\in \br$, such that
  for any unit vector $u\in \frak a^+$,
 \begin{equation} \label{def.GI}
\psi_{\Gamma}(u):=\inf_{{u\in\cal C},{\mathrm{open\;cones\;}\cal C\subset \fa^+}}\tau_{\cal C}
\end{equation}
where $\tau_{\cal C}$ is the abscissa of convergence of the series $\sum_{\ga\in\Ga, \mu(\ga)\in\cal C}e^{-t\norm{\mu(\ga)}}$.
We have $\psi_\Ga \ge 0$ on $\L$ and $\psi_\Ga=-\infty$ outside $\L$.

\subsection*{The generalized BMS-measures $m_{\nu_1,\nu_2}$.}

For $g\in G$, we consider the following visual images:
$$g^+ :=gP\in \F \quad \text{ and}\quad g^- :=gw_0P\in \F.$$ 
Let $\F^{(2)}$ denote the unique open $G$-orbit in $\cal F\times\cal F$ under the diagonal action. In fact, 
$$\F^{(2)}=\{(g^+, g^-): g\in G\}.$$
Then the map $$gM\mapsto (g^+, g^-, b=\beta_{g^-}(e, g))$$ gives a homeomorphism
 $G/M\simeq  \F^{(2)}\times \fa $, called the Hopf parametrization of $G/M$.

For a pair of linear forms $\psi_1, \psi_2\in \mathfrak a^*$ and a pair of  $(\Gamma,\psi_1)$ and $(\Gamma,\psi_2)$ conformal measures $\nu_1$ and $\nu_2$ respectively, define a locally finite Borel measure $\tilde {m}_{\nu_1,\nu_2}$ on $G/M$ 
  as follows: for $g=(g^+, g^-, b)\in \F^{(2)}\times \mathfrak a$,
\begin{equation}\label{eq.BMS0}
d\tilde m_{\nu_1, \nu_2} (g)=e^{\psi_1 (\beta_{g^+}(e, g))+\psi_2( \beta_{g^-} (e, g )) } \;  d\nu_{1} (g^+) d\nu_{2}(g^-) db,
\end{equation}
  where $db=d\ell (b) $ is the Lebesgue measure on $\mathfrak a$.
  By abuse of notation, we also denote by $\tilde m_{\nu_1, \nu_2}$ the $M$-invariant measure on $G$ induced by $\tilde m_{\nu_1, \nu_2}$.
This is always left $\Ga$-invariant  and right $A$ quasi-invariant:
for all $a\in A$,
$$
a_*\tilde m_{\nu_1, \nu_2}=e^{(-\psi_1+\psi_2\circ \i)(\log a)}\,\tilde m_{\nu_1, \nu_2} ;$$
we refer to \cite{ELO} for more details on these measures.
We  denote by
$ m_{\nu_1, \nu_2}$ the $M$-invariant measure on $\Ga\ba G$ induced by $\tilde m_{\nu_1, \nu_2}$.

We will need the following notion:
\begin{dfn}\label{conf}
Let $g_i\in G$ be a sequence whose Cartan decomposition is given by $g_i=k_ia_i\ell_i\in KA^+K$.
As $i\to\infty$,
\end{dfn}
\begin{enumerate}
\item we say that $g_i\to\infty$ regularly if $\alpha(\log a_i)\to\infty$ for all simple root $\alpha$ of $(\fg, \fa)$;
\item we say that $g_i$ converges to $\xi\in\cal F$, if $g_i\to\infty$ regularly and $\lim\limits_{i\to\infty}k_i^+=\xi$;
\item we say that $p_i=g_i(o) \in X$ converges to $\xi\in \cal F$ if $g_i$ does.
\end{enumerate}

We then define the limit set $\La$ of $\Ga$ as the set of all accumulation points of $\Ga(o)$ in $\F$; this is the unique $\Ga$-minimal subset 
(\cite[Lem. 2.13]{LO}, \cite{Ben}).
As in the introduction, we also define the conical limit set:
$$\La_c=\left\{gP\in  \F: 
\begin{array}{c}
\text{there exist $\ga_i\in \Ga$ and $a_i\to \infty$ in $A^+$}       \\
\text{such that
$\ga_i g a_i$ is bounded}
\end{array}\right\}.$$

In the rest of this section, we assume that $\Ga<G$ is a Zariski dense Anosov subgroup
(with respect to $P$) as defined in the introduction. We collect some important properties of Anosov subgroups that we will be using.
\begin{lem} $($\cite{KLP1}, \cite{GW}$)$\label{lem.RA}
If $\Ga<G $ is Anosov, then we have:
\begin{enumerate}
    \item (Regularity) If $\ga_i\to \infty$ in $\Ga$, then $\ga_i\to\infty$ regularly as $i\to\infty$. 
    \item (Antipodality)  If $\xi, \eta\in \La$ are distinct, then $(\xi,\eta)\in\cal F^{(2)}$.
    \item (Conicality) $\La=\La_c$.
\end{enumerate}
\end{lem}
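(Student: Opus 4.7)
The plan is to address the three properties in turn, each drawing on the Anosov defining inequality $\alpha(\mu(\ga)) \ge C|\ga| - C'$ valid for every simple root $\alpha$ of $(\fg,\fa)$.

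\textbf{Regularity (1).} This is essentially immediate. Since $\Ga$ is finitely generated and discrete, its word-length balls are finite, so $\ga_i \to \infty$ in $\Ga$ forces $|\ga_i| \to \infty$. The Anosov inequality then gives $\alpha(\mu(\ga_i)) \to \infty$ for every simple root $\alpha$, which after Cartan-decomposing $\ga_i = k_i (\exp \mu(\ga_i)) \ell_i$ is precisely the definition of going to infinity regularly (the vector $\mu(\ga_i)$ moves into the interior of $\fa^+$ at a linear rate in $|\ga_i|$).

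\textbf{Antipodality (2).} I would invoke the $\Ga$-equivariant boundary map $\zeta : \partial_\infty \Ga \to \F$ furnished by the standard equivalent characterization of Anosov subgroups: $\zeta$ is continuous, antipodal (distinct points map to pairs in $\F^{(2)}$), and its image is $\La$. Given $\xi \neq \eta$ in $\La$, pull them back along $\zeta^{-1}$ to distinct points in $\partial_\infty \Ga$ and invoke antipodality of $\zeta$. Alternatively one can argue directly from Cartan decompositions of sequences $\ga_i \to \xi$ and $\ga_i' \to \eta$: a degeneration $(\xi,\eta) \notin \F^{(2)}$ would force the relevant $K$-components to collapse faster than the regularity just established in (1) permits, contradicting $\xi \ne \eta$.

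\textbf{Conicality (3).} The inclusion $\La_c \subseteq \La$ is formal. For the converse, fix $\xi \in \La$ and write $\xi = gP$. Pick $\ga_i \to \infty$ in $\Ga$ with $\ga_i \to \xi$ in the sense of Definition \ref{conf}. Cartan-decompose $\ga_i = k_i (\exp \mu(\ga_i)) \ell_i$; by (1), $\mu(\ga_i) \to \infty$ regularly and $k_i^+ \to \xi$. The task is to produce $b_i \to \infty$ in $A^+$ with $\ga_i^{-1} g b_i$ bounded in $G$, equivalently $d(\ga_i o, g b_i o)$ bounded in the symmetric space $X = G/K$. Here the higher-rank Morse property of Anosov subgroups enters: a word-geodesic ray in $\Ga$ converging (in $\partial_\infty \Ga$) to the preimage of $\xi$ maps under the orbit map to a uniform regular quasigeodesic in $X$, which by the Morse lemma lies within uniformly bounded Hausdorff distance of a genuine regular geodesic ray $\{g(\exp tu)o : t \ge 0\}$ for some $u \in \op{int}\fa^+$. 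Choosing $b_i = \exp(t_i u)$ with $t_i$ so that $g b_i o$ is the nearest point on this ray to $\ga_i o$ keeps $\ga_i o$ within bounded distance of $g b_i o$, so $\ga_i^{-1} g b_i$ stays in a compact subset of $G$, witnessing $\xi \in \La_c$.

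\textbf{Main obstacle.} Parts (1) and (2) are light consequences of the Anosov definition plus the existence of an antipodal boundary map. The substantive content lies in (3): one needs the higher-rank Morse lemma for Anosov subgroups with uniform constants over $\Ga$, asserting that discrete orbits shadow continuous regular geodesics in $X$. This is the nontrivial ingredient imported from \cite{KLP1}; once it is in hand, the extraction of the $A^+$-sequence $b_i$ is routine.
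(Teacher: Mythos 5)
The paper gives no proof of this lemma at all: it is stated with the citations \cite{KLP1}, \cite{GW} and immediately followed by the remark that these three properties in fact characterize Anosov subgroups (\cite[Thm.~1.1]{KLP1}). So there is no ``paper's proof'' to match; what you have written is a reconstruction of the standard arguments from the cited literature. Your part (1) is genuinely immediate from the definition the paper adopts (the Cartan-projection inequality $\alpha(\mu(\ga))\ge C|\ga|-C'$), since $\log a_i=\mu(\ga_i)$ and word-length balls in a finitely generated group are finite; this is correct and is the one part that needs no external input. Parts (2) and (3) are, as you say, the substantive content: antipodality is built into the boundary-map characterization (the transverse limit maps of \cite{GW}, \cite{GGKW}), and conicality rests on the higher-rank Morse property of \cite{KLP1}. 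For (3) one small correction of emphasis: the Morse lemma places the orbit image of a word-geodesic ray within bounded distance of a Weyl cone $gA^+o$ (a ``diamond''), not necessarily of a single geodesic ray $g\exp(\br_+u)o$; but since the definition of $\La_c$ only asks for $a_i\to\infty$ in $A^+$ with $\ga_i g a_i$ bounded, the cone version is exactly what is needed, so your argument goes through. Also note that the inclusion $\La_c\subseteq\La$, while routine, does use regularity from (1) together with the stability of convergence under bounded perturbation (the analogue of \cite[Lem.~2.11]{LO}).

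One genuine caveat: the ``alternative'' direct argument you sketch for antipodality does not work. Antipodality of the limit set is \emph{not} a consequence of regularity; there exist regular (non-Anosov) discrete subgroups whose limit sets fail to be antipodal, and nothing about the rate at which $\mu(\ga_i)$ enters $\op{int}\fa^+$ controls the relative position in $\F\times\F$ of limits of two different sequences. Antipodality must be imported from the boundary-map (or Morse) characterization, exactly as in your primary argument; the fallback should be deleted.
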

Indeed, these three properties characterize Anosov subgroups
\cite[Thm. 1.1]{KLP1}.
Note that the regularity of (1) implies that
    $\Gamma(o)\cup \La$ is compact. Moreover, 
    by \cite[Lem. 2.10]{LO}, we have:
    \begin{lem}\label{com} 
For any compact subset $Q\subset G$, the union
$\Gamma (Q)\cup \La$ is compact.
    \end{lem}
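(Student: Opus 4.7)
The plan is to prove this by a direct sequential compactness argument driven by the regularity property of Anosov subgroups (Lemma \ref{lem.RA}(1)). The natural ambient space is a Furstenberg-type compactification $X\cup\F$, in which $\Gamma(Q)=\Gamma Q\cdot o \subseteq X$ while $\La\subseteq \F$. Since $\La$ is closed in the compact $\F$, it is itself compact, so I only need to control the accumulation of $\Gamma(Q)$ at infinity — specifically, to show that every sequence in $\Gamma(Q)$ with unbounded $\Gamma$-part has a subsequence converging to a point of $\La$.

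Let $(x_n)$ be a sequence in $\Gamma(Q)\cup\La$. After extraction I may assume either $x_n\in\La$ (and quote compactness of $\La$) or $x_n=\gamma_n q_n\cdot o$ with $\gamma_n\in\Gamma$ and $q_n\in Q$. If $\{\gamma_n\}$ is relatively compact in $G$, the discreteness of $\Gamma$ forces it to be eventually constant, and compactness of $Q$ then yields a subsequence converging in $\Gamma(Q)$. Otherwise $\gamma_n\to\infty$ in $\Gamma$, and Lemma \ref{lem.RA}(1) gives $\alpha(\mu(\gamma_n))\to\infty$ for every simple root $\alpha$. The standard Lipschitz bound $\|\mu(g_1)-\mu(g_2)\|\leq d(g_1\cdot o,g_2\cdot o)$, combined with the uniform estimate $d(\gamma_n q_n\cdot o,\gamma_n\cdot o)=d(q_n\cdot o,o)\leq \sup_{q\in Q} d(q\cdot o,o)$, then shows that $\gamma_n q_n$ also diverges to infinity regularly.

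I then write the Cartan decomposition $\gamma_n q_n=k_n a_n \ell_n \in KA^+K$ and, by compactness of $K$, pass to a subsequence with $k_n\to k\in K$. Definition \ref{conf} gives $\gamma_n q_n\cdot o\to \xi:=kP\in \F$ in the compactification. To finish, I would argue $\xi\in\La$: since $\gamma_n\cdot o$ stays within uniformly bounded $X$-distance of $\gamma_n q_n\cdot o$ and the latter escapes regularly to $\xi$, the pure orbit $\gamma_n\cdot o$ must have the same limit $\xi$ in $\F$, which therefore lies in $\La$ by the very definition of the limit set.

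The one technical nuisance — which I expect to be the sole non-formal step — is this last claim: that convergence to a point of $\F$ along a regular sequence is preserved under bounded $X$-perturbations. This is a standard feature of the Furstenberg compactification and can be verified by tracking the shift in the $K$-factor of the Cartan decomposition under right multiplication by a bounded element (the ambiguity being absorbed by the expanding $A^+$-component), or equivalently via continuity of the Iwasawa cocycle along regular directions. Once this is granted, the remainder is a clean application of sequential compactness together with the regularity supplied by the Anosov hypothesis and the Lipschitz property of $\mu$.
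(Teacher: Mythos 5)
Your argument is correct and is essentially the proof the paper implicitly relies on: the paper gives no argument for Lemma \ref{com}, citing \cite[Lem.~2.10]{LO} instead, and the standard proof behind that citation is exactly yours --- regularity of divergent Anosov sequences, the $1$-Lipschitz property of the Cartan projection to transfer regular divergence from $\gamma_n$ to $\gamma_n q_n$, and stability of convergence to points of $\F$ under bounded perturbations (your one ``technical nuisance'' is precisely \cite[Lem.~2.11]{LO}, which this paper invokes elsewhere, e.g.\ in the proof of Lemma \ref{lem.RS}). Two minor touch-ups: boundedness of $\{\gamma_n\}$ together with discreteness gives finitely many values, hence a constant subsequence rather than an eventually constant sequence; and since $\La$ is defined in Section 2 as the accumulation set of $\Gamma(o)$ in $\F$, your final identification of the limit $\xi$ as a point of $\La$ closes immediately once the bounded-perturbation lemma is granted.
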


The following is a consequence of the antipodal property of Anosov subgroups, and plays a key role in this paper.

\begin{lem}\cite[Lem. 8.13]{LO} \label{proper} Let $\Ga<G$ be Anosov. 
For $x=[g]\in \Ga\ba G$, the following are equivalent:
\begin{enumerate}
  \item  $\limsup x A\ne \emptyset$;
 \item  $\limsup xA^+ \cup \limsup xw_0A^+\ne \emptyset $;
 \item $\{gP, g w_0 P\}\cap \La\ne \emptyset.$
\end{enumerate}
\end{lem}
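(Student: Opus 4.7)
The plan is to handle the three implications separately. The directions $(3)\Rightarrow(2)$ and $(2)\Rightarrow(1)$ are immediate: for the first, conicality $\La=\La_c$ (Lemma \ref{lem.RA}) produces, from $gP\in\La$ (or $gw_0P\in\La$), a bounded witness $\ga_i g a_i$ (or $\ga_i g w_0 a_i$) with $\ga_i\in\Ga$ and $a_i\in A^+\to\infty$, which directly shows $\limsup xA^+\cup\limsup xw_0 A^+\ne\emptyset$; the second is trivial since $A^+\cup w_0 A^+\subset A$.

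For the main direction $(1)\Rightarrow(3)$, I would start with $\ga_i\in\Ga$ and $a_i\in A$ such that $a_i\to\infty$ and $h_i:=\ga_i g a_i\to h\in G$; since $g$ is fixed this forces $\ga_i\to\infty$ in $\Ga$. By Anosov regularity (Lemma \ref{lem.RA}) I may pass to a subsequence with $\ga_i\to\xi_+\in\La$ and $\ga_i^{-1}\to\xi_-\in\La$. Using $a_i\in A\subset P$ and $w_0^{-1}a_i w_0\in A\subset P$, I then have $h_iP=\ga_i gP$ and $h_i w_0 P=\ga_i gw_0 P$, hence
\[
\ga_i g P\to hP,\qquad \ga_i gw_0 P\to hw_0 P,\qquad (hP,hw_0 P)\in \F^{(2)}.
\]

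The heart of the argument is the claim that $hP$ and $hw_0 P$ lie in $\La$; I would obtain this from the combination of two ingredients already available in the Anosov setting: $\La$ is the unique $\Ga$-minimal subset of $\F$ (by Zariski density), and Anosov subgroups act with only finite point-stabilizers on $\F$, so that any accumulation of $\{\ga_i\cdot\eta\}$ with $\ga_i\to\infty$ and $\eta\in\F$ must lie in $\La$. Granted this, I split into two cases using antipodality (Lemma \ref{lem.RA}). If $hP\ne\xi_+$, both are distinct elements of $\La$, so $(hP,\xi_+)\in\F^{(2)}$; the standard contraction lemma for regular sequences, applied to $\ga_i^{-1}$ on the convergent sequence $\eta_i:=\ga_i gP\to hP$, yields $\ga_i^{-1}\eta_i\to\xi_-$, but $\ga_i^{-1}\eta_i=gP$ is constant, forcing $gP=\xi_-\in\La$. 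If instead $hP=\xi_+$, then $(hw_0 P,\xi_+)=(hw_0 P, hP)\in\F^{(2)}$ automatically, and the same contraction argument applied to $\eta_i':=\ga_i gw_0 P\to hw_0 P$ gives $gw_0 P=\xi_-\in\La$. In either case (3) holds.

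The hard part will be establishing $hP,\,hw_0 P\in\La$ rigorously in higher rank. The minimality-plus-stabilizer outline above is intuitive but uses structural facts specific to Anosov subgroups (beyond Zariski density); a backup plan is to argue directly from the Cartan decomposition of $h_i=\ga_i g a_i$ together with the regularity of $\ga_i$ to see that both $hP$ and $hw_0 P$ arise as visual limits of $\Ga$-translates along a regular direction, hence lie in $\La$. The contraction property of regular sequences on $\F$ that I invoke in the case split is standard in the theory of Anosov representations.
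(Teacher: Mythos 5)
This lemma is not proved in the paper at all: it is imported verbatim from \cite[Lem.~8.13]{LO}, so there is no in-paper argument to compare yours against; the closest internal model for what a correct proof must look like is the proof of Lemma \ref{lem.RS}. Your reductions $(3)\Rightarrow(2)$ (via conicality $\La=\La_c$) and $(2)\Rightarrow(1)$ (via $xw_0A^+=xA^-w_0$ with $A^-=w_0A^+w_0^{-1}\subset A$) are fine.

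The gap is in $(1)\Rightarrow(3)$, at exactly the point you flag as ``the hard part,'' and neither your main route nor your backup closes it. The principle ``any accumulation point of $\ga_i\cdot\eta$ with $\ga_i\to\infty$ in $\Ga$ and $\eta\in\F$ arbitrary lies in $\La$'' is false in higher rank: a regular sequence $\ga_i$ with attracting point $\xi_+$ and repelling point $\xi_-$ contracts to $\xi_+$ only on the open Bruhat cell opposite to $\xi_-$; on the lower-dimensional cells the limits are Weyl-twisted points of the form $k_\infty wP$, $w\in\cal W$, which need not belong to $\La$. Your case analysis then silently re-uses the unproved claim: to deduce $(hP,\xi_+)\in\F^{(2)}$ from $hP\neq\xi_+$ you invoke antipodality, which requires already knowing $hP\in\La$. (For a self-joining $\Ga=(\rho_1\times\rho_2)(\Sigma)<\PSL_2(\br)\times\PSL_2(\br)$ one can arrange $\ga_igP\to hP$ with $hP\notin\La$ and $(hP,\xi_+)\notin\F^{(2)}$, so this step genuinely fails.) The actual content of $(1)\Rightarrow(3)$ is the Weyl-chamber bookkeeping that your proposal never touches: after passing to a subsequence one writes $a_i=wb_iw^{-1}$ with $b_i\in A^+$ and $w\in\cal W$ fixed, uses $\mu(\ga_i)\approx\mu(a_i^{-1})$ (Lemma \ref{lem.lgl}) together with the Anosov regularity of $\ga_i$ to see that $b_i\to\infty$ regularly, and then compares the visual limit $ga_io=gwb_io\to gwP$ with $ga_io=\ga_i^{-1}(h_io)\to\lim\ga_i^{-1}\in\La$ to conclude $gwP\in\La$. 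This is statement (3) only when $w\in\{e,w_0\}$; disposing of the intermediate chambers $w\neq e,w_0$ is the entire difficulty, is where antipodality must be brought to bear (compare the proof of Lemma \ref{lem.RS}(1)), and is not accomplished by any soft contraction argument. You should consult the proof of \cite[Lem.~8.13]{LO} for how this case is handled.
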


    \begin{thm}\cite{PS}\label{thm.PS}
For $\Ga$ Anosov, we have $$\L\subset \inte \fa^+\cup\{0\}.$$   
    \end{thm}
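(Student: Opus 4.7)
The plan is to chase the definitions and exploit the Anosov linear growth inequality on simple roots. Take a nonzero $v \in \mathcal{L}$; by definition there exist $t_i \to 0^+$ and a sequence $\gamma_i \to \infty$ in $\Gamma$ with $t_i \mu(\gamma_i) \to v$. Since $v \neq 0$ and $t_i \to 0$, it follows immediately that $\|\mu(\gamma_i)\| \to \infty$, and because $\Gamma$ is discrete and finitely generated, going to infinity in $\Gamma$ forces $|\gamma_i| \to \infty$ as well. To show $v \in \op{int} \fa^+$, I need to verify $\alpha(v) > 0$ for every simple root $\alpha$ of $(\fg, \fa)$.

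The key ingredient is already packaged in the definition of an Anosov subgroup: there exist constants $C, C' > 0$ such that
\[
\alpha(\mu(\gamma_i)) \geq C|\gamma_i| - C'
\]
for all $i$ and every simple $\alpha$. First I would observe the elementary complementary bound: because $\mu$ is subadditive and the Cartan projections of the fixed finite generating set of $\Gamma$ are uniformly bounded, there is a constant $D > 0$ such that $\|\mu(\gamma)\| \leq D |\gamma|$ for every $\gamma \in \Gamma$. Multiplying the Anosov inequality by $t_i$ and passing to the limit then yields
\[
\alpha(v) = \lim_{i \to \infty} t_i \alpha(\mu(\gamma_i)) \geq \limsup_{i \to \infty} t_i \big( C|\gamma_i| - C' \big) \geq \frac{C}{D} \lim_{i \to \infty} t_i \|\mu(\gamma_i)\| = \frac{C}{D}\|v\| > 0,
\]
where I used $t_i C' \to 0$ and $t_i \|\mu(\gamma_i)\| \to \|v\|$.

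Since this holds for every simple root, $v$ lies in $\op{int} \fa^+$. As $v \in \mathcal{L}\setminus\{0\}$ was arbitrary, we conclude $\mathcal{L} \subset \op{int} \fa^+ \cup \{0\}$. In this argument there is no real obstacle; the only mildly nontrivial step is recording the Lipschitz upper bound $\|\mu(\gamma)\| \leq D|\gamma|$, which is a standard consequence of the subadditivity of the Cartan projection on $G$ together with the finiteness of the generating set of $\Gamma$.
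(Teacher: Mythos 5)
Your argument is correct and complete given the definition of Anosov subgroup that this paper adopts. The paper itself offers no proof of Theorem \ref{thm.PS}: it simply cites Potrie--Sambarino \cite{PS}, where the statement is obtained from the Anosov property in its original dynamical formulation. Your route is more elementary and self-contained: you unwind the definition of the limit cone, invoke the defining inequality $\alpha(\mu(\gamma))\ge C|\gamma|-C'$ for each simple root, and combine it with the reverse Lipschitz bound $\norm{\mu(\gamma)}\le D|\gamma|$ (which does follow, as you say, from subadditivity of $\mu$ --- equivalently from the triangle inequality $d(o,\gamma o)\le |\gamma|\max_s d(o,so)$ in the symmetric space, since $\norm{\mu(g)}=d(o,go)$ for the Killing-form norm). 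Passing to the limit gives the quantitative conclusion $\alpha(v)\ge (C/D)\norm{v}$ for every $v\in\L$ and every simple root $\alpha$, which is in fact slightly stronger than the stated containment: it shows $\L$ lies in a closed subcone of $\inte\fa^+\cup\{0\}$ making a definite angle with the walls. The only caveat is that your proof is a proof of the theorem \emph{relative to the Cartan-projection characterization of Anosov subgroups} (due to Kapovich--Leeb--Porti and Gu\'eritaud--Guichard--Kassel--Wienhard); if one starts from Labourie's original definition, the content of \cite{PS} (or of the equivalence of definitions) is still being used. Since the paper explicitly takes that characterization as its definition, this is not a gap here.
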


\begin{cor} \label{inf} If $\Ga<G$ is Anosov and $\text{rank } G\ge 2$, there exists no finite $A$-invariant Borel measure on $\Ga\ba G$.
\end{cor}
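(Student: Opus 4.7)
The plan is to use Poincar\'e recurrence along a singular one-parameter subgroup of $A$ to force a nonzero vector of $\partial\fa^+$ into the limit cone $\L$, contradicting Theorem~\ref{thm.PS}. Since $\rank G \ge 2$, I will fix a unit vector $u\in\partial\fa^+$; by Theorem~\ref{thm.PS}, $u\notin\L$.

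Suppose for contradiction that $\nu$ is a finite $A$-invariant Borel measure on $\Ga\ba G$. The one-parameter subgroup $\{a_t:=\exp(tu)\}_{t\in\br}$ preserves $\nu$, so Poincar\'e recurrence applied to $a_1$ yields, for $\nu$-a.e.\ $x=[g]\in\Ga\ba G$, a compact set $Q\subset G$, integers $n_k\to\infty$, and $\ga_k\in\Ga$ with $\ga_k^{-1}ga_{n_k}\in Q$. Setting $h_k:=\ga_k^{-1}ga_{n_k}\in Q$, one obtains $\ga_k^{-1}g=h_k\exp(-n_k u)$ with $h_k$ in a fixed compact set and $\ga_k\to\infty$ in $\Ga$.

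Since $u\in\fa^+$, one has $\mu(\exp(-n_k u)) = n_k\,\i(u)\in\fa^+$, a divergent sequence with unit direction $\i(u)\in\partial\fa^+$ (as $\i$ preserves $\fa^+$ together with its wall structure). The standard asymptotic that a bounded left perturbation of a divergent sequence in $A$ preserves the limiting direction of the Cartan projection then yields
\[
\frac{\mu(\ga_k^{-1} g)}{\norm{\mu(\ga_k^{-1} g)}}\longrightarrow \i(u),
\]
and since $g$ is fixed, the same limit holds for $\mu(\ga_k^{-1})/\norm{\mu(\ga_k^{-1})}$. Applying the opposition involution will give $\mu(\ga_k)/\norm{\mu(\ga_k)}\to u$, so $u\in\L$, contradicting the choice of $u$.

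The one technical point requiring care is the direction convergence of the Cartan projection under a bounded left perturbation. Geometrically, $\exp(-n_k u)\cdot o$ and $h_k\exp(-n_k u)\cdot o$ in the Riemannian symmetric space $X=G/K$ remain at bounded distance while both escape to infinity, so they share the same limit in the visual boundary of $X$, and this limit determines the common limiting $\fa^+$-direction of their Cartan projections; this is a standard consequence of the Lipschitz-type triangle inequality for the $\fa$-valued Cartan projection.
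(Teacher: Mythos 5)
Your argument is correct and follows essentially the same route as the paper's proof: Poincar\'e recurrence for a finite $A$-invariant measure, the Lipschitz property of the Cartan projection under bounded perturbation (Lemma \ref{lem.lgl}, i.e.\ Benoist's Lemma 4.6), and a contradiction with Theorem \ref{thm.PS}. The only (cosmetic) difference is that you recur directly along a singular direction $u\in\partial\fa^+$, whereas the paper recurs along an interior direction $v\in \inte\fa^+$ and then uses closedness of the limit cone to get $\L\supseteq \fa^+$, which meets $\partial\fa^+ - \{0\}$ when $\operatorname{rank} G\ge 2$; both are equally valid.
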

\begin{proof} Suppose there exists a finite $A$-invariant Borel measure  $m$
on $\Ga\ba G$. 
Let $v\in \inte\fa^+$. By the Poincare recurrence theorem,
$m$-almost all points are recurrent for the action of $\exp \br v$. In particular, there exist $g\in G$,
$\ga_i\in \Ga$ and $t_i\to +\infty$ such that
$\ga_i g \exp (t_i v)$ is bounded. Then the sequence $\mu(\ga_i^{-1})$ stays in a bounded distance from the ray $\br_+ v$ by \cite[Lem. 4.6]{Ben}; it follows that $v\in \L$.
Therefore $\L=\fa^+\cup\{0\}$. If $\text{rank } G\ge 2$, then
$\fa^+-\{0\}\ne \inte\fa^+$. Hence the claim follows from Theorem \ref{thm.PS}.

\end{proof}

\section{Uniform bound on the multiplicity of shadows}\label{sec.sh}
For
$p\in X=G/K$ and $S>0$,
we set $B(p,S):=\{x\in X: d(x, p)<S\}$.

Recall the notation $o=[K]\in X=G/K$.
For $p \in X$ and $S>0$, the shadow of the ball
$B(p,S)$ as seen from $o$ is defined by
\begin{align*}
O_S(o,p)&:=\{\xi \in \cal F :
\text{ for some $k\in K$ with $\xi=kP$, }kA^+o\cap B(p,S)\neq\emptyset \}.
\end{align*}

\begin{lem} \label{sm}  Let $\Ga<G$ be a Zariski dense Anosov subgroup of $G$. For any $S, D>0$, there exists $q=q(S, D)>0$ such that for any $T>0$, the shadows
 $$\{O_{S}(o,\ga o):T<\norm{\mu(\ga)}<T+D\}$$ have multiplicity at most $q$.
\end{lem}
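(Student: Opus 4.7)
My plan is to adapt Roblin's rank-one argument for shadow multiplicity to the higher-rank Anosov setting, using two key inputs from the preceding section: the uniform regularity of Cartan projections ensuring $\log a_\ga$ is deep in $\fa^+$ (Theorem \ref{thm.PS}), and the proper embedding of Weyl-translated $A^+$-orbits in $\Ga \backslash G$ established in \cite[Lem. 8.13]{LO} (Lemma \ref{proper}). Fix $\xi = kP \in \mathcal F$ and consider any $\ga \in \Ga$ with $\xi \in O_S(o, \ga o)$ and $T < \|\mu(\ga)\| < T + D$; the aim is to bound the cardinality of such a collection uniformly in $T$ and $\xi$.

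From the shadow definition, each such $\ga$ decomposes as $\ga = k m_\ga a_\ga b_\ga$ with $m_\ga \in M$, $a_\ga \in A^+$, and $b_\ga$ lying in the fixed compact set $B_S := \{g \in G : d(go,o) \le S\}$; moreover, by Anosov regularity and the boundedness of $b_\ga$, one gets $\|\log a_\ga\| \in [T - S, T + D + S]$ with $\log a_\ga$ uniformly deep in $\fa^+$. For two distinct $\ga_1, \ga_2$ in the collection, using that $M$ centralizes $A$,
$$
\ga_1^{-1} \ga_2 \;=\; b_1^{-1}\,(m_1^{-1} m_2)\,(a_1^{-1} a_2)\,b_2 \;\in\; B_S \cdot M \cdot A \cdot B_S,
$$
so $\mu(\ga_1^{-1} \ga_2)$ is controlled, up to error $O(S)$, by the $\fa^+$-Weyl-image of $\log a_2 - \log a_1$.

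The crux is to bound, uniformly in $T$, the number of distinct $a_\ga$'s that can appear. For this I would invoke Lemma \ref{proper}: were the multiplicity unbounded as $T \to \infty$, subsequential extraction would force the relative positions $\log a_2 - \log a_1$ to accumulate in directions outside $A^+ \cup w_0 A^+ w_0^{-1}$. Such accumulation manifests as an $A$-orbit in $\Ga \backslash G$ accumulating along a Weyl-translated direction $w\fa^+$ with $w \ne e, w_0$, contradicting the proper embedding of $xwA^+w^{-1}$ from \cite[Lem. 8.13]{LO}. This restricts the admissible $a_\ga$'s to a set of uniformly bounded cardinality depending only on $S$ and $D$, and the final bound then follows from discreteness of $\Ga$, which caps the count of $\ga \in \Ga \cap k M a B_S$ for each admissible $a$.

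The main obstacle is the quantitative implementation of the last step: translating the asymptotic properness from Lemma \ref{proper} into an effective, $T$-uniform multiplicity bound. I plan to address this by a proof-by-contradiction combined with subsequential extraction; assuming unbounded multiplicity, one constructs a limiting $A$-orbit in $\Ga \backslash G$ accumulating in a direction excluded by \cite[Lem. 8.13]{LO}, producing the desired contradiction and forcing the finiteness claimed in the lemma.
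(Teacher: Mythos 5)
Your setup (decomposing $\ga=km_\ga a_\ga b_\ga$ with $b_\ga$ in a compact set, controlling $\mu(\ga_1^{-1}\ga_2)$ by $\log a_2-\log a_1$ via Benoist's lemma, and reducing to counting the admissible $a_\ga$'s) matches the paper's, and you correctly identify the role of antipodality/properness in restricting the relative positions $a_1^{-1}a_2$ to $(A^+\cup w_0A^+w_0^{-1})$ up to bounded error --- this is exactly Lemma \ref{lem.RS}(1), whose proof is where \cite[Lem. 8.13]{LO} enters. But the step you flag as the ``main obstacle'' is a genuine gap, and the mechanism you propose for it would not close it. Unbounded multiplicity does \emph{not} force the relative positions to accumulate in directions outside $A^+\cup w_0A^+w_0^{-1}$: arbitrarily many $a_i$ can have pairwise differences all lying in $A^+\cup A^-$ (e.g. points spaced along a single ray in $\inte\fa^+$), so no contradiction with the properness of $xwA^+w^{-1}$ for intermediate $w$ ever materializes. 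The constraint that actually does the work is the norm window $T<\norm{\mu(\ga_i)}<T+D$, and here general Weyl-chamber geometry is insufficient: if $x,\,y-x\in\fa^+$ with $\norm{x}\ge T$ and $\norm{y}\le T+D$, one only gets $\norm{y-x}^2\le\norm{y}^2-\norm{x}^2=O(TD)$, i.e. $\norm{\mu(\ga_i^{-1}\ga_j)}=O(\sqrt{TD})$, which is \emph{not} uniform in $T$ and gives no multiplicity bound.

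The missing ingredient is Theorem \ref{thm.PS}: for Anosov $\Ga$ the limit cone is contained in $\inte\fa^+\cup\{0\}$, so two elements of $\mu(\Ga)$ make an angle bounded away from $\pi/2$. This yields the strengthened inequality of Lemma \ref{lem.par}, $\norm{x+y}^2\ge\norm{x}^2+\norm{y}^2+\beta_1\norm{x}\norm{y}-\beta_2$ with $\beta_1>0$, and it is precisely the cross term $\beta_1\norm{x}\norm{y}$ (applied with $x=\mu(\ga_i)$, $y=\mu(\ga_i^{-1}\ga_j)$, after Lemma \ref{lem.RS}(2) gives $\mu(\ga_j)\approx\mu(\ga_i)+\mu(\ga_i^{-1}\ga_j)$ or its symmetric variant) that upgrades $O(\sqrt{TD})$ to a bound $\norm{\mu(\ga_i^{-1}\ga_j)}=O(D)$ independent of $T$; discreteness of $\Ga$ then finishes as you say. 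You cite the interior-cone property in your opening paragraph but never use it quantitatively; without it the argument fails even in the ``correct'' Weyl directions, so the proposal as written does not prove the lemma.
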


The rest of this section is devoted to the proof of Lemma \ref{sm}. 

Throughout the section, we fix a compact subset
$Q$ of $G$. 
The notation $x\approx_{Q} y$ means that $x-y$ is contained in a bounded set that depends only on $Q$.
We will simply write $x\approx y$ if the implicit bounded set depends only on $\G$ and $G$.

\begin{lem}\cite[Lem. 4.6]{Ben}\label{lem.lgl}
For all $g\in G$ and $q_1,q_2\in Q$, we have
$$\mu(q_1gq_2)\approx_Q \mu(g).$$
\end{lem}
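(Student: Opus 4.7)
The plan is a short representation-theoretic argument that bounds the $\fa$-valued difference $\mu(q_1gq_2)-\mu(g)$ one linear functional at a time.

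First I would fix finitely many irreducible representations $(\rho_i,V_i)$, $i=1,\dots,r$, of $G$ (pulling back from the simply connected cover if needed, which is harmless since only $\chi_i\circ\mu$ is used), equipped with $K$-invariant Euclidean norms on the $V_i$, whose highest weights $\chi_1,\dots,\chi_r\in\fa^*$ form a basis. Such a collection exists by the theory of fundamental representations for connected semisimple real algebraic groups. For each $i$, the Cartan decomposition $g=k_1\exp(\mu(g))k_2$ together with $K$-invariance of the norm yields the standard identity
$$\log\|\rho_i(g)\|=\chi_i(\mu(g))\qquad\text{for every }g\in G,$$
since the largest singular value of $\rho_i(\exp\mu(g))$ is attained at the highest weight $\chi_i$ (using that $\mu(g)\in\fa^+$ so that $\chi_i(\mu(g))\ge\lambda(\mu(g))$ for every weight $\lambda$ of $\rho_i$).

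Next, for $q_1,q_2\in Q$, applying submultiplicativity of the operator norm to both
$$\rho_i(q_1gq_2)=\rho_i(q_1)\rho_i(g)\rho_i(q_2)\quad\text{and}\quad\rho_i(g)=\rho_i(q_1)^{-1}\rho_i(q_1gq_2)\rho_i(q_2)^{-1},$$
and using that $Q$ is compact and $\rho_i$ continuous, produces a constant $C_i=C_i(Q)$ such that
$$\bigl|\chi_i(\mu(q_1gq_2))-\chi_i(\mu(g))\bigr|\le C_i\qquad\text{for all }g\in G,\ q_1,q_2\in Q,\ 1\le i\le r.$$

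Finally, because $\{\chi_i\}$ is a basis of $\fa^*$, the linear map $X\mapsto(\chi_i(X))_{i=1}^r$ is an isomorphism $\fa\to\R^r$ and therefore bi-Lipschitz with respect to $\|\cdot\|$ on $\fa$ and the Euclidean norm on $\R^r$. The uniform bounds above then force $\mu(q_1gq_2)-\mu(g)$ to lie in a bounded subset of $\fa$ depending only on $Q$, which is exactly the assertion $\mu(q_1gq_2)\approx_Q\mu(g)$. The only substantive ingredient is the classical identity $\log\|\rho_i(g)\|=\chi_i(\mu(g))$ and the existence of representations whose highest weights span $\fa^*$; no serious obstacle arises beyond these standard facts.
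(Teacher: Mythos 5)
Your argument is correct and is essentially the standard proof of this fact; the paper itself offers no proof but cites \cite[Lem.~4.6]{Ben}, whose argument likewise runs through representations whose highest weights span $\fa^*$, the identity $\log\|\rho_i(g)\|=\chi_i(\mu(g))$, and submultiplicativity of the operator norm. The only point worth making explicit is that the Euclidean norm on each $V_i$ must be chosen not merely $\rho_i(K)$-invariant but also so that $\rho_i(A)$ acts by self-adjoint operators (a ``good norm'' compatible with the Cartan involution), since that is what guarantees $\|\rho_i(\exp\mu(g))\|=e^{\chi_i(\mu(g))}$.
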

\begin{lem}\label{lem.q}
Let $a\in A$ and $w\in\cal W$ be such that $w aw^{-1}\in A^+$.
If $Q\cap \ga Q a^{-1}\neq\emptyset$, then $\mu(\ga)\approx_Q \op{Ad}_w\log a$.
\end{lem}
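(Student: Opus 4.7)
The plan is to unpack the hypothesis $Q\cap \ga Q a^{-1}\neq\emptyset$ into an explicit equation for $\ga$ modulo $Q$, then apply the already-proven Lemma~\ref{lem.lgl} to reduce to computing $\mu(a)$, and finally use the Weyl-group representative $w$ to identify $\mu(a)$ with $\op{Ad}_w\log a$.

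\medskip

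\noindent\textbf{Step 1 (Rewrite the hypothesis).} First I would choose $q_1,q_2\in Q$ with $q_1=\ga q_2 a^{-1}$, which gives the identity
$$
\ga \;=\; q_1\, a\, q_2^{-1}.
$$
Since $Q$ is compact, the set $Q\cup Q^{-1}$ is also compact, so enlarging the compact set once and for all, I may assume $q_2^{-1}$ lies in a compact set of the same flavor as $Q$.

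\medskip

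\noindent\textbf{Step 2 (Reduce to $\mu(a)$).} Applying Lemma~\ref{lem.lgl} to $g=a$ and the two compact-set elements $q_1,q_2^{-1}$, I get
$$
\mu(\ga)\;=\;\mu(q_1\, a\, q_2^{-1})\;\approx_{Q}\;\mu(a).
$$

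\medskip

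\noindent\textbf{Step 3 (Identify $\mu(a)$ via the Weyl element).} By definition, $\mu(a)$ is the unique element of $\fa^+$ lying in the Weyl orbit of $\log a$. Picking a representative $\dot w\in N_K(A)$ of $w\in \cal W$, the assumption $waw^{-1}\in A^+$ means $\op{Ad}_w(\log a)\in \fa^+$. Since
$$
a\;=\;\dot w^{-1}\bigl(\dot w a\dot w^{-1}\bigr)\dot w\;\in\; K\,\exp\!\bigl(\op{Ad}_w(\log a)\bigr)\,K,
$$
uniqueness of the Cartan projection forces $\mu(a)=\op{Ad}_w\log a$. Combining with Step~2 yields $\mu(\ga)\approx_Q \op{Ad}_w\log a$, as claimed.

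\medskip

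There is no real obstacle here; the only thing to keep track of is that the implicit bounded set in $\approx_Q$ depends only on $Q$ (and not on $a$, $\ga$, or $w$), which is automatic from Lemma~\ref{lem.lgl} since the final step is an \emph{exact} equality rather than an approximation. This lemma will then feed into the shadow-multiplicity bound (Lemma~\ref{sm}) by constraining, for each Weyl chamber translate, how $\mu(\ga)$ can sit relative to $\log a$.
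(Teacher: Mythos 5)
Your proof is correct and follows essentially the same route as the paper: unpack the intersection into $\ga = q_1 a q_2^{-1}$, apply Lemma~\ref{lem.lgl}, and identify $\mu(a)=\op{Ad}_w\log a$ via the Cartan decomposition. The paper compresses the last step into ``the conclusion follows from Lemma~\ref{lem.lgl},'' so your Step~3 is just a more explicit writing-out of the same argument.
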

\begin{proof}
If $Q\cap \ga Qa^{-1}\neq\emptyset$, then there exists $q_0, q_0'\in Q$ such that $q_0a=\ga q_0'$.
The conclusion follows from Lemma \ref{lem.lgl}.
\end{proof}

We set $A^-=w_0A^+w_0^{-1}$, and
for any $C>0$, set $A_C:=\{a\in A : \norm{\log a}\leq C\}$.
The following lemma is a key ingredient in the proof of Lemma \ref{sm};
we use the regularity and antipodality of Anosov subgroups.
\begin{lem}\label{lem.RS} Let $\G<G$ be Anosov.
There 
exists
$C_0>1$ depending only on $Q$ such that
whenever  $Q\cap \ga_1 Qa_1^{-1}\cap \ga_2 Qa_2^{-1}\neq\emptyset$ for $\ga_1,\ga_2\in\Ga$ and $a_1,a_2\in A^+$, we have
\begin{enumerate}
    \item
    $a_1^{-1}a_2\in (A^+\cup A^-)A_{C_0}$;

    \item 
    $\mu(\ga_2)\approx_{Q} \mu(\ga_1)+\mu(\ga_1^{-1}\ga_2)$ or $\mu(\ga_1)\approx_{Q} \mu(\ga_2)+\mu(\ga_2^{-1}\ga_1)$.
\end{enumerate}
\end{lem}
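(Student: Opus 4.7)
Pick $q_0\in Q\cap\ga_1 Qa_1^{-1}\cap\ga_2 Qa_2^{-1}$ and write $q_0=\ga_i q_i a_i^{-1}$ with $q_i\in Q$, giving $\ga_i=q_0 a_i q_i^{-1}$ and $\ga_1^{-1}\ga_2=q_1(a_1^{-1}a_2)q_2^{-1}$. Lemma~\ref{lem.lgl} yields $\mu(\ga_i)\approx_Q\log a_i$ and $\mu(\ga_1^{-1}\ga_2)\approx_Q\mu(a_1^{-1}a_2)$, so once (1) is established, part (2) reduces to a routine computation in the abelian group $A$: if $b:=a_1^{-1}a_2=a_+a_0$ with $a_+\in A^+$ and $a_0\in A_{C_0}$, then $\log b\in\fa^++O_Q(1)$ forces $\mu(b)=\log a_++O_Q(1)$, and $a_2=a_1 b$ gives $\mu(\ga_2)=\mu(\ga_1)+\mu(\ga_1^{-1}\ga_2)+O_Q(1)$; the $A^-$ case produces the symmetric alternative via $a_1=a_2b^{-1}$.

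\textbf{Plan for (1): contradiction via Anosov regularity.}
Assume no uniform $C_0$ works and extract a sequence with $b_n:=(a_1^{(n)})^{-1}a_2^{(n)}$ satisfying $d(b_n,A^+\cup A^-)\to\infty$ and $q_i^{(n)}\to q_\infty^i\in Q$. Put $\ga_n:=q_1^{(n)}b_n(q_2^{(n)})^{-1}\in\Ga$; Lemma~\ref{lem.lgl} gives $\mu(\ga_n)\approx_Q\mu(b_n)$, so $\ga_n\to\infty$ together with the Anosov regularity (Lemma~\ref{lem.RA}(1)) forces $\mu(b_n)\to\infty$ \emph{uniformly regularly}. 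Since $b_n\in A$, and $\mu(b_n)$ is the Weyl transform of $\log b_n$ into $\fa^+$, this pins $\log b_n$ (after subsequencing) inside a single Weyl chamber $w\fa^+$ with $\log b_n/\|\log b_n\|\to v\in\inte(w\fa^+)$ for some $w\in W$. It now suffices to force $w\in\{e,w_0\}$, as this places $\log b_n\in\fa^+\cup\fa^-$ for large $n$, contradicting $d(b_n,A^+\cup A^-)\to\infty$.

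\textbf{Antipodality forces $w\in\{e,w_0\}$.}
Write $b_n=w\exp(Y_n)w^{-1}$ with $Y_n\in\fa^+$, $Y_n/\|Y_n\|\to u\in\inte\fa^+$. Compute the Furstenberg limit of $\ga_n(o)=q_1^{(n)}\,w\exp(Y_n)\,w^{-1}(q_2^{(n)})^{-1}(o)$: since $\exp(Y_n)$ applied to any bounded subset of $X$ converges in the visual boundary to $P\in\F$ along the regular $\fa^+$-ray $Y_n$, and $q_1^{(n)}\to q_\infty^1$, this Furstenberg limit equals $q_\infty^1 wP$; by the Anosov regularity of $\ga_n$ (which forces $\ga_n(o)$ to converge in $\F$ to a point of $\La$), we conclude $q_\infty^1 wP\in\La$. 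Separately, $y_\infty:=[q_\infty^1]\in\Ga\backslash G$ has $\limsup y_\infty A\ne\emptyset$ via the sequence $b_n$, so Lemma~\ref{proper} forces $\{q_\infty^1 P,\,q_\infty^1 w_0 P\}\cap\La\ne\emptyset$. If $w\notin\{e,w_0\}$, combining the two facts produces two \emph{distinct} limit points of the form $q_\infty^1 uP$ and $q_\infty^1 wP$ with $u\in\{e,w_0\}$; antipodality (Lemma~\ref{lem.RA}(2)) places the pair in $\F^{(2)}$, which via the Bruhat decomposition $G=\bigsqcup_{\sigma\in W}P\sigma P$ amounts to $u^{-1}w=w_0$ in $W$, forcing $w=w_0$ (when $u=e$) or $w=e$ (when $u=w_0$) --- contradicting $w\notin\{e,w_0\}$. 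The \emph{main obstacle} is this Furstenberg-limit computation: one must pin down the specific point $q_\infty^1 wP\in\La$, rather than the disjunction $\{q_\infty^1 wP,\,q_\infty^1 ww_0 P\}\cap\La\ne\emptyset$ that Lemma~\ref{proper} applied to $[q_\infty^1 w]$ alone would deliver, since it is the coincidence of that specific point with an element of $\{q_\infty^1 P,\,q_\infty^1 w_0 P\}$ that, through antipodality, excludes every $w\notin\{e,w_0\}$.
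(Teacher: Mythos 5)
Your overall strategy for part (1) --- argue by contradiction, use Anosov regularity to force $\log b_n$ into the interior of a single Weyl chamber $\op{Ad}_w\fa^+$, and use antipodality plus the Bruhat position of Weyl translates to exclude $w\notin\{e,w_0\}$ --- is the same as the paper's, and your computation of the limit $\ga_n(o)\to q^1_\infty wP\in\La$ is correct and is exactly how the paper pins down one of the two needed limit points. The gap is in the other half: the claim that $y_\infty=[q^1_\infty]$ satisfies $\limsup y_\infty A\ne\emptyset$ ``via the sequence $b_n$.'' What your setup actually gives is that $[q_1^{(n)}]b_n=[q_2^{(n)}]$ stays bounded, i.e.\ recurrence of the $A$-orbits of the \emph{varying} base points $[q_1^{(n)}]$, each witnessed by a single time $b_n$. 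To transfer this to the fixed point $y_\infty$ you would write $q^1_\infty b_n=q_1^{(n)}b_n\cdot\bigl(b_n^{-1}\varepsilon_n b_n\bigr)$ with $\varepsilon_n=(q_1^{(n)})^{-1}q^1_\infty\to e$, and the conjugate $b_n^{-1}\varepsilon_n b_n$ need not remain bounded since $b_n\to\infty$; $A$-orbit recurrence does not pass to limits of base points. So Lemma \ref{proper} cannot be invoked for $y_\infty$, and the disjunction $\{q^1_\infty P,\,q^1_\infty w_0P\}\cap\La\ne\emptyset$ is unproved at the point where you use it.

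The repair is to obtain the second limit point directly, just as you did for the first, with no appeal to Lemma \ref{proper} (the paper's proof does not use that lemma here). Since $\log b_n$ escapes every neighborhood of $\fa^+\cup(-\fa^+)$, the element $a_1^{(n)}$ must tend to infinity in $A^+$ (otherwise $b_n\in A_C A^+$ for a fixed $C$); then $\mu(\ga_1^{(n)})\approx_Q\log a_1^{(n)}$ and Lemma \ref{lem.RA}(1) give that $a_1^{(n)}\to\infty$ regularly, so the points $(\ga_1^{(n)})^{-1}q_0^{(n)}o=q_1^{(n)}(a_1^{(n)})^{-1}o$ converge to $(q^1_\infty)^-=q^1_\infty w_0P$. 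These points lie in $\Ga Q o$, whose accumulation points in $\F$ belong to $\La$ by Lemma \ref{com}, so $q^1_\infty w_0P\in\La$. With this specific point in hand (no disjunction needed), your antipodality/Bruhat step goes through verbatim: $q^1_\infty wP$ and $q^1_\infty w_0P$ are distinct points of $\La$ that are not in general position when $w\notin\{e,w_0\}$, contradicting Lemma \ref{lem.RA}(2). The reduction of (2) to (1) in your first paragraph is correct and matches the paper.
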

\begin{proof} 
We first prove $(1)$. Suppose not.
Then there exists a compact set $Q\subset G$ and sequences
$q_{0,i},q_{1,i},q_{2,i}\in Q$, $a_{1,i}, a_{2,i}\in A^+$ and $\ga_{1,i},\ga_{2,i}\in\Ga$ such that

\begin{align}
    &a_{1,i}^{-1}a_{2,i}\not\in (A^+\cup A^-)A_i,\label{eq.w1}\\
    &q_{0,i}\,a_{1,i}=\ga_{1,i}\,q_{1,i},\quad q_{0,i}\,a_{2,i}=\ga_{2,i}\,q_{2,i}\label{eq.w2}
\end{align} where $A_i=\{a\in A:\|\log a\|\le i\}$.

Observe that \eqref{eq.w1} implies  $a^{-1}_{1,i}a_{2,i}\to\infty$ in $A$ and $a_{1,i}, a_{2,i}\to\infty$ in $A^+$.
Observe that $a_{1,i}, a_{2,i}\to\infty$  regularly, by \eqref{eq.w2} and 
Lemmas \ref{lem.RA} and \ref{lem.lgl}.

Passing to a subsequence, we may assume that for each $m=1,2$,  $q_{m,i}$ converges to some $ q_m\in Q$, and $\ga_{m,i}^{-1}q_{0,i}o$ converges to some element  $\xi\in  \La$ as $i\to\infty$.
Since $\ga_{m,i}^{-1}q_{0,i}o=q_{m,i}a_{m,i}^{-1}o$, it follows that $\xi=q_m^-$ by \cite[Lem. 2.11]{LO} for each $m=1,2$. Therefore $q_m^-\in \La$.
On the other hand, we have
\begin{equation}\label{eq.w3}
\ga_{1,i}^{-1}\ga_{2,i}\,q_{2,i}=q_{1,i}\, a_{1,i}^{-1} a_{2,i}.
\end{equation}
Note that $\ga_{1,i}^{-1}\ga_{2,i}\to\infty$ and there exists $w_i\in\cal W-\{e,w_0\}$ such that $w_i^{-1}a_{1,i}^{-1}a_{2,i} w_i\in A^+$.
Passing to a subsequence, we may assume that $w_i=w$ is constant and $\ga_{1,i}^{-1}\ga_{2,i}\,q_{2,i}\,o$ converges to an element of $\La$ by Lemma \ref{com}.
By \eqref{eq.w3} and \cite[Lem. 2.11]{LO}, it follows that $q_1w^+\in\La$.
This contradicts 
Lemma \ref{lem.RA}, as neither $q_1w^+=q_1^-$ nor $(q_1w^+,q_1^-)\in\cal F^{(2)}$, proving $(1)$.

To prove $(2)$, observe that we have $\mu(\ga_1)\approx_Q \log a_1$, $\mu(\ga_2)\approx_Q \log a_2$ by Lemma \ref{lem.q}, since $Q\cap \ga_1 Qa_1^{-1}\cap \ga_2Qa_2^{-1}\neq\emptyset$. On the other hand, it follows from $(1)$ that 
$$\mu(\ga_2^{-1}\ga_1)\approx_{Q} \log a_1^{-1}a_2\quad\text{or}\quad \mu(\ga_1^{-1}\ga_2)\approx_{Q} \log a_2^{-1}a_1.$$
Hence $(2)$ is proved.
\end{proof}

The following lemma follows from  Theorem \ref{thm.PS} and the fact that the angle between two walls of a  Weyl chamber is at most $\pi/2$.
\begin{lem}\label{lem.par}
There exist constants $\beta_1,\beta_2>0$ depending only on $\Ga$ such that
for all $x,y\in\mu(\Ga)$, we have $$
\norm{x+y}^2\geq \norm{x}^2+\norm{y}^2+\beta_1\norm{x}\norm{y}-\beta_2.
$$
\end{lem}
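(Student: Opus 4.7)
The plan is to rewrite $\norm{x+y}^2=\norm{x}^2+\norm{y}^2+2\langle x,y\rangle$ and reduce the lemma to producing constants $c_1, c_2>0$ with $\langle x, y\rangle\geq c_1 \norm{x}\norm{y} - c_2$ for all $x, y \in \mu(\Gamma)$. This will be obtained by combining the interior localization of the limit cone (Theorem \ref{thm.PS}) with the non-obtuseness of the Weyl chamber.

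First I would apply Theorem \ref{thm.PS} to deduce $\L\setminus\{0\}\subset \inte\fa^+$. Since $\L$ is closed, its unit-sphere section $\L\cap S^{n-1}$ is a compact subset of $\inte\fa^+\cap S^{n-1}$; fix a compact neighborhood $\scrN$ of $\L\cap S^{n-1}$ inside $\inte\fa^+\cap S^{n-1}$. A routine compactness argument using the definition of $\L$ as the asymptotic cone of $\mu(\Gamma)$ produces some $R>0$ such that every $\gamma\in\Gamma$ with $\norm{\mu(\gamma)}\geq R$ satisfies $\mu(\gamma)/\norm{\mu(\gamma)}\in\scrN$; otherwise one could extract a sequence whose rescalings limit to an element of $(\L\cap S^{n-1})\setminus\scrN$, a contradiction.

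Next, the hypothesis that the angle between any two walls of $\fa^+$ is at most $\pi/2$ means, via the Killing form identification, that $\fa^+\subseteq(\fa^+)^*$, i.e.\ $\langle x, y\rangle\geq 0$ for all $x, y\in\fa^+$; equivalently, the inverse Cartan matrix has nonnegative entries, a standard fact for semisimple Lie algebras. Expanding $u\in\inte\fa^+$ and $v\in\fa^+\setminus\{0\}$ in the fundamental coweight basis and using the positive diagonal contribution yields the strict positivity $\langle u, v\rangle>0$. By joint compactness of $\scrN$ and $\fa^+\cap S^{n-1}$ one extracts a constant $c>0$ with $\langle u, v\rangle\geq c$ for every $u\in\scrN$ and $v\in\fa^+\cap S^{n-1}$.

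A short case split then closes the argument. For $x,y\in\mu(\Gamma)$: if $\max(\norm{x},\norm{y})\geq R$, say $\norm{y}\geq R$, then $y/\norm{y}\in\scrN$ gives $\langle x, y\rangle\geq c\norm{x}\norm{y}$; otherwise $\norm{x},\norm{y}<R$ and $\langle x, y\rangle\geq 0\geq c\norm{x}\norm{y}-cR^2$. Setting $\beta_1=2c$ and $\beta_2=2cR^2$ yields the claimed inequality. The only delicate point is the strict positivity $\langle u, v\rangle>0$ for interior $u$ and boundary $v$ (which can fail for merely non-obtuse cones, e.g.\ across distinct simple factors of $\mathfrak g$), but this is rescued by taking $u$ strictly interior so that the diagonal terms in the coweight expansion dominate. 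The error term $\beta_2$ is unavoidable since the finitely many $\mu(\gamma)$ of norm less than $R$ can lie arbitrarily close to the walls of $\fa^+$.
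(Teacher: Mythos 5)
Your proof is correct and follows exactly the route the paper indicates (the paper gives no detailed argument, only the one-line remark that the lemma follows from Theorem \ref{thm.PS} together with the non-obtuseness of the Weyl chamber): you use Theorem \ref{thm.PS} to confine $\mu(\gamma)/\norm{\mu(\gamma)}$ to a compact subset of $\inte\fa^+\cap S^{n-1}$ for all but finitely many $\gamma$, and the non-negativity of inner products on $\fa^+$ plus strict positivity against interior directions to get the lower bound on $\langle x,y\rangle$. The attention you give to why strict positivity holds for interior $u$ (via the coweight expansion, guarding against the reducible case) is exactly the point that needs care, and your case split correctly absorbs the finitely many short elements into $\beta_2$.
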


\subsubsection*{Proof of Lemma \ref{sm} }
Suppose that there exist $\xi\in \bigcap_{i=1}^n O_{S}(o,\ga_io)$ and  $T<\norm{\mu(\ga_i)}<T+D$ for some $\ga_i$ $(i=1,\cdots,n)$.
Set $Q:=KA_{S}^+K$. Choose $k\in K$ such that $\xi=kP$.
Then  $d(kA^+o, \ga_i o)\le S$. It follows that there exists  a sequence $a_1,\cdots, a_n\in A^+$ such that
 $k\in Q\cap\ga_1Qa_1^{-1}\cap\cdots\cap \ga_nQa_n^{-1}$.

We claim that there exists $D'=D'(Q, D)>0$ such that  \begin{equation}\label{eq.D}\max_{i,j} \|\mu(\ga_i^{-1}\ga_j)\| <D'.\end{equation}
This implies that $n\le \#\{\ga\in \G:\|\mu(\ga)\|\le D'\}$.

To prove \eqref{eq.D}, we apply Lemma \ref{lem.RS}(2) to each pair $(\ga_i,\ga_j)$; suppose first that $\mu(\ga_j)\approx_Q \mu(\ga_i)+\mu(\ga_i^{-1}\ga_j)$.
Since $
\|\mu(\ga_j)\|
\le T+D$, there exists $D_1=D_1(Q)>0$ such that
\begin{equation}\label{eq.T}
  \norm{\mu(\ga_i)+\mu(\ga_i^{-1}\ga_j)}^2\le  (\|\mu(\ga_j)\|+ D_1)^2\le 
  (T+D+D_1)^2.
\end{equation}
Set $D_2=D+D_1$.
By Lemma \ref{lem.par} and  \eqref{eq.T}, we deduce that
$$
\beta_1\norm{\mu(\ga_i^{-1}\ga_j)}T+\norm{\mu(\ga_i^{-1}\ga_j)}^2<2D_2T+D_2^2+\beta_2,
$$
in particular, $\norm{\mu(\ga_i^{-1}\ga_j)}<\max (\sqrt{D_2^2+\beta_2},2D_2\beta_1^{-1})$.
The other case of Lemma \ref{lem.RS}(2) also yields the same conclusion by a symmetric argument.

This proves the claim \eqref{eq.D}.\qed

We remark that the boundedness of the multiplicity of the {\it intersection } of shadows and the limit set for projective Anosov representations, with respect to the {\it word length} $|\ga|$ is given \cite[Prop. 3.5]{PSW}.

\section{Poincare series and the average of correlations}
Let $\G<G$ be a Zariski dense Anosov subgroup. We fix $\psi\in \fa^*$  and  a $(\Ga, \psi)$-conformal measure $\la_\psi$ on $\cal F$ (not necessarily supported on $\La$).  
We assume that 
$$\sum_{\ga\in \Ga}e^{-\psi(\mu(\ga))} =\infty.$$ 
This implies that $\psi$ is $\G$-critical
by \cite[Lem. III.1.3]{Q4}.
Therefore, there exists a
$(\Ga,\psi\circ\i)$-conformal measure, say $\la_{\psi\circ\i}$, e.g., as constructed by Quint.

Let 
$$\tilde{\mathsf m}=\tilde m_{\la_\psi,\la_{\psi\circ \i}}^{\BMS}$$ denote the generalized $\BMS$ measure on $G$, which is left $\Ga$-invariant and right $AM$-invariant.

The notations $x\lesssim_z y$ (resp. $x\ll_z y$) are to be understood that $x\leq y+C$ (resp. $x\leq Cy$) for some constant $C>0$ that depends on $z$.

\medskip 
The main aim of this section is to prove the following proposition.
For $r>0$ and any subset $S\subset A$, we set $S_r=\{a\in S: 
\|\log a\|
\le r\}$.

\begin{prop}\label{p1}  
Let $Q_r=KA_r^+ K A_r$ for $r>0$.
For any sufficiently large $r>1 $, the following holds:
for any $T\ge 1$,
$$
\int_{A_T^+}\int_{A_T^+}\sum_{\ga_1,\ga_2\in\Ga} \tilde{\mathsf m}(Q_r\cap \ga_1 Q_r a_1^{-1}\cap \ga_2 Q_r a_2^{-1})\,da_1\,da_2\ll \bigg(\sum_{
    \substack{
    \ga\in\Ga,\\
    \norm{\mu(\ga)}\leq T
    }
    }e^{-\psi(\mu(\ga))}\bigg)^2
$$
and 
$$
\int_{A_T^+}\sum_{\ga\in\Ga} \tilde{\mathsf m}(Q_r\cap \ga Q_r a^{-1})\,da\gg \sum_{
    \substack{
    \ga\in\Ga,\\
    \norm{\mu(\ga)}\leq T
    }
    }e^{-\psi(\mu(\ga))}
$$ where the implied constants depend only on $r$.
\end{prop}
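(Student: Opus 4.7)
The plan is to reduce both estimates to a $\BMS$ shadow lemma that I would establish by combining Quint's boundary shadow lemma $\la_\psi(O_S(o,\ga o))\asymp e^{-\psi(\mu(\ga))}$ with the Hopf parametrization $d\tilde{\mathsf m}(g)=e^{\psi(\beta_{g^+}(e,g))+\psi\circ\i(\beta_{g^-}(e,g))}\,d\la_\psi(g^+)\,d\la_{\psi\circ\i}(g^-)\,db$: for $r$ sufficiently large and any $\ga\in\Ga$, $a\in A^+$ with $\log a-\mu(\ga)$ in a ball of radius $O(r)$ in $\fa$,
\[
\tilde{\mathsf m}(Q_r\cap\ga Q_r a^{-1})\asymp_r e^{-\psi(\mu(\ga))},
\]
while by Lemma~\ref{lem.q} the intersection is empty whenever $\log a$ lies outside this neighborhood. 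The $g^+$-projection of $Q_r\cap\ga Q_r a^{-1}$ is essentially the shadow $O_S(o,\ga o)$, and the $g^-$- and $b$-projections lie in fixed bounded sets on which $\la_{\psi\circ\i}\otimes db$ and the Busemann weight each contribute a factor in $[c_r,C_r]$. Given this, the lower bound follows immediately: for each $\ga$ with $\|\mu(\ga)\|\leq T-C_r$ the set of compatible $a\in A_T^+$ has $A$-volume $\gg_r 1$, so $\int_{A_T^+}\tilde{\mathsf m}(Q_r\cap\ga Q_r a^{-1})\,da\gg_r e^{-\psi(\mu(\ga))}$; summing over $\ga$ and using that the partial sums of the Poincar\'e series at $\psi$ up to $T-C_r$ and up to $T$ are comparable to within an $O_r(1)$ factor (trivial for bounded $T$, and using the tame growth of the divergent series otherwise) yields the stated lower bound.

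For the upper bound, the essential structural input is the dichotomy of Lemma~\ref{lem.RS}(2): a non-empty triple $Q_r\cap\ga_1 Q_r a_1^{-1}\cap\ga_2 Q_r a_2^{-1}$ forces either $\mu(\ga_2)\approx_r\mu(\ga_1)+\mu(\ga_1^{-1}\ga_2)$ or the symmetric statement. In the first case, set $\eta=\ga_1^{-1}\ga_2$; since $\psi\geq0$ on the limit cone, dominating the triple by the $\ga_2$-double and applying the shadow lemma gives
\[
\tilde{\mathsf m}(Q_r\cap\ga_1 Q_r a_1^{-1}\cap\ga_2 Q_r a_2^{-1})\leq\tilde{\mathsf m}(Q_r\cap\ga_2 Q_r a_2^{-1})\asymp_r e^{-\psi(\mu(\ga_2))}\asymp_r e^{-\psi(\mu(\ga_1))}\,e^{-\psi(\mu(\eta))},
\]
where the last equivalence uses the linearity of $\psi$ and $\mu(\ga_2)\approx\mu(\ga_1)+\mu(\eta)$. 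Integrating out the compatible $a_1,a_2$ (each confined by Lemma~\ref{lem.q} to an $O_r(1)$-volume neighborhood of $\exp\mu(\ga_i)$) and substituting $(\ga_1,\ga_2)\mapsto(\ga_1,\eta)$ bounds the double sum by
\[
\sum_{\ga_1,\eta\in\Ga}e^{-\psi(\mu(\ga_1))}\,e^{-\psi(\mu(\eta))}\,\mathbbm{1}\{\|\mu(\ga_1)+\mu(\eta)\|\leq T+O(r)\}.
\]
By Lemma~\ref{lem.par}, $\|\mu(\ga_1)\|$ and $\|\mu(\eta)\|$ are each bounded by $\|\mu(\ga_1)+\mu(\eta)\|+O(1)\leq T+O(r)$, so the constraint decouples and the double sum factorises as $\bigl(\sum_{\|\mu(\ga)\|\leq T+O(r)}e^{-\psi(\mu(\ga))}\bigr)^2\ll_r\bigl(\sum_{\|\mu(\ga)\|\leq T}e^{-\psi(\mu(\ga))}\bigr)^2$, as required. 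The second case of Lemma~\ref{lem.RS}(2) is handled symmetrically.

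The main obstacle is establishing the BMS shadow lemma with matched upper and lower bounds. The upper bound follows cleanly once Lemma~\ref{lem.q} confines $g^+$ to $O_S(o,\ga o)$ and Quint's estimate is applied. The lower bound is more delicate: after restricting the $g^+$-projection of $Q_r\cap\ga Q_r a^{-1}$ to a suitable sub-shadow of $O_S(o,\ga o)$, one must verify that the complementary $g^-$- and $b$-fibers carry uniformly positive $\la_{\psi\circ\i}\otimes db$-mass. Here the antipodality and regularity of Anosov $\Ga$ (Lemma~\ref{lem.RA}) are essential: they ensure that $\la_{\psi\circ\i}$-neighborhoods of $\ga^-\in\La$ are non-negligible and transverse to the $g^+$-direction, giving uniform positive thickness of the fiber independently of $\ga$.
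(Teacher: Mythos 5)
Your proposal is correct and follows essentially the same route as the paper: the upper bound via the dichotomy of Lemma~\ref{lem.RS}(2), domination of the triple intersection by a single one, the shadow lemma, and reindexing by $\eta=\ga_1^{-1}\ga_2$ with Lemma~\ref{lem.par} to decouple the constraint; the lower bound via restricting to the product of shadows and showing the compatible $a\in A_T^+$ carry volume $\gg_r 1$ (the paper's Lemmas~\ref{lem.lb1}, \ref{lem.S} and \ref{lem.shadow}, organized as an integrated rather than pointwise-in-$a$ lower bound). The one step you under-justify is replacing $\sum_{\|\mu(\ga)\|\le T\pm O(r)}e^{-\psi(\mu(\ga))}$ by $\sum_{\|\mu(\ga)\|\le T}e^{-\psi(\mu(\ga))}$: "tame growth of the divergent series" is not automatic, and the correct justification is that the shell sums $\sum_{T<\|\mu(\ga)\|<T+D}e^{-\psi(\mu(\ga))}$ are uniformly bounded, which follows from the shadow lemma together with the multiplicity bound of Lemma~\ref{sm} (this is the paper's Lemma~\ref{lem.mult}); since that ingredient is already established in Section~3, this is a gap in citation rather than in substance.
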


The rest of this section is devoted to the proof of this proposition, given as the proofs of Propositions \ref{prop.sub1} and \ref{prop.sub2}.

The $\frak a$-valued Gromov product on $\cal F^{(2)}$ is defined as follows: 
for
$(g^+, g^-) \in\cal F^{(2)}$,
$$
\cal G(g^+,g^-): = \beta_{g^+}(e,g)+\op i{\left(\beta_{g^-}(e,g)\right)};
$$ this is well-defined independent of the choice of $g\in G$.
\begin{lem}\cite[Prop. 8.12]{BPS}\label{lem.GP}
There exist $c$, $c'>0$ such that for all $g\in G$,
$$
c^{-1}\norm{\cal G(g^+,g^-)}\leq d(o, gAo)\leq c\norm{\cal G(g^+,g^-)}+c'.
$$
\end{lem}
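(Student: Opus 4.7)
Both sides of the asserted inequality are invariant under right multiplication of $g$ by $AM$: the flat $gAo$ is clearly unchanged, and the Gromov product satisfies $\mathcal G((gam)^+,(gam)^-)=\mathcal G(g^+,g^-)$ since $(ga)^+=g^+$, $(ga)^-=g^-$ (as $a\in P$ and $w_0^{-1}aw_0\in A\subset P$), and $M$ fixes $\xi\in \cal F$ pointwise-on-the-cocycle-level. Consequently I may choose the $AM$-representative realizing the infimum $d(o,gAo)$, i.e.\ replace $g$ by $ga_0 m_0$ so that
\[
d(o,gAo)=d(o,go)=\|\mu(g)\|.
\]
At this minimizer, the geodesic from $o$ to $go$ meets the flat $gAo$ perpendicularly.

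\textbf{Lower bound.} The Iwasawa cocycle is dominated by the Cartan projection: there is a uniform constant $C_0$ with $\|\sigma(h^{-1},\xi)\|\le \|\mu(h)\|+C_0$ for all $h\in G$ and $\xi\in\cal F$ (this follows from writing $h=k_1(\exp\mu(h))k_2$ and applying the Iwasawa decomposition to $k_2\cdot\xi$). Applied to $h=g$ with $\xi=g^+$ and $\xi=g^-$, this yields
\[
\|\beta_{g^+}(e,g)\|\le d(o,go)+C_0,\qquad \|\beta_{g^-}(e,g)\|\le d(o,go)+C_0,
\]
and hence $\|\mathcal G(g^+,g^-)\|\le 2 d(o,gAo)+2C_0$, i.e.\ $c^{-1}\|\mathcal G(g^+,g^-)\|\le d(o,gAo)$ after absorbing the additive constant into $c$ (valid once $\|\mathcal G\|$ is sufficiently large; the small-$\|\mathcal G\|$ range is trivial).

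\textbf{Upper bound.} Here I plan to use the perpendicularity at the minimizer. Differentiating the convex function $a\mapsto d(o,gao)^2$ on $A$ at $a=e$ and using the first-variation formula for the Riemannian distance in the symmetric space, the minimality condition reads
\[
\op{pr}_{\fa}\bigl(\text{initial velocity of the geodesic }[o,go]\text{ at }go\bigr)=0,
\]
where $\op{pr}_{\fa}$ denotes the $K$-equivariant projection onto the $\fa$-direction tangent to the flat. Translating this condition through the Iwasawa decomposition of $g$, it becomes the balancing identity
\[
\beta_{g^+}(e,g)\;\approx\;\i\bigl(\beta_{g^-}(e,g)\bigr)
\]
up to a bounded error (this is precisely the statement that the orthogonal projection of $o$ onto the flat $gAo$ lies at the ``midpoint'' of the $\fa$-parameterization in the two regular directions $g^+,g^-$). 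Hence $\tfrac12\|\mathcal G(g^+,g^-)\|\ge \|\beta_{g^+}(e,g)\|-O(1)$. Combined with the fact, again at the minimizer, that $\|\mu(g)\|$ differs from $\|\beta_{g^+}(e,g)\|$ by a uniformly bounded amount (the unipotent part in the Iwasawa decomposition of $g$ must be bounded, since otherwise an $A$-translate would strictly decrease $d(o,go)$), we deduce $d(o,go)\le c\|\mathcal G(g^+,g^-)\|+c'$.

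\textbf{Main obstacle.} The delicate step is justifying the balancing identity and the boundedness of the unipotent part at the minimizer. Both are variational consequences of the fact that $o$ projects orthogonally to $go$ on the flat, but turning this into a sharp statement about $\beta_{g^+}(e,g)$ vs.\ $\i\beta_{g^-}(e,g)$ requires the explicit relationship between the two ``opposite'' Iwasawa decompositions associated with $g^+$ and $g^-$; antipodality of $(g^+,g^-)\in\cal F^{(2)}$ ensures that both decompositions are well-defined and that their $\fa$-components are linked by $\i$ up to a correction controlled by the Gromov product itself. The argument can be made rigorous by working in the model $G/M\cong \cal F^{(2)}\times\fa$ via the Hopf parameterization and computing distances explicitly in Hopf coordinates.
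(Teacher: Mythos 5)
The paper does not prove this lemma at all: it is imported verbatim from \cite[Prop.~8.12]{BPS}, so the only "proof" in the paper is the citation, and your proposal must be judged as a self-contained argument. The lower bound in your write-up is essentially correct and can even be cleaned up: since the Iwasawa cocycle is dominated by the Cartan projection ($\sigma(h,\xi)$ lies in the convex hull of the Weyl orbit of $\mu(h)$), one has $\|\beta_{g^{\pm}}(e,g)\|\le\|\mu(g)\|$ with no additive constant, hence $\|\cal G(g^+,g^-)\|\le 2\,d(o,gao)$ for every $a\in A$ and therefore $\|\cal G(g^+,g^-)\|\le 2\,d(o,gAo)$ with $c=2$. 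This also repairs your "absorb the additive constant" step, which as written does not yield the stated multiplicative inequality in the regime where $\|\cal G(g^+,g^-)\|$ is small but $d(o,gAo)$ is smaller still.

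The genuine gap is in the upper bound, and you flag it yourself: the balancing identity $\beta_{g^+}(e,g)\approx\i\bigl(\beta_{g^-}(e,g)\bigr)$ at the minimizer and the boundedness of the unipotent part there are not routine consequences of perpendicularity; they \emph{are} the content of the proposition. Note that along the flat the two cocycles shift by $+\log a$ and $-\log a$ respectively, so a "balanced" point where both equal $\tfrac12\cal G(g^+,g^-)$ certainly exists; what must be shown is that at that point (equivalently, at the distance minimizer) the Cartan projection is comparable to the Iwasawa projection, i.e.\ that the $N$-component of $g$ relative to $g^+$ is uniformly bounded. Your one-line justification --- "otherwise an $A$-translate would strictly decrease $d(o,go)$" --- is not an argument: translating by $a\in A^+$ contracts the unipotent part but simultaneously grows the $A$-part, and quantifying this competition uniformly, including configurations where $\mu(g)$ approaches a wall of $\fa^+$, is exactly where \cite{BPS} do real work (via norm estimates in highest-weight representations relating $\cal G$ to the angles between the lines attached to $g^{+}$ and $g^{-}$). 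The closing appeal to "computing distances explicitly in Hopf coordinates" does not help, since the Riemannian metric has no tractable expression in those coordinates. As it stands, the proposal proves the easy inequality and defers the two decisive steps of the hard one.
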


\medskip

For $r>0$, let 
$$G_r=KA_r^+K$$ and
$$\cal L_r(o, go):=\{(h^+,h^-)\in\cal F^{(2)}: h\in G_r,\;
ha\in   gG_r\text{ for some $a\in A^+$}\}.$$
\begin{lem}\label{lem.u1}
For any $g\in G$ and $r>0$, we have
$$
\cal L_r(o,go)\subset O_{2r}(o,go)\times O_{2r}(go,o).
$$
\end{lem}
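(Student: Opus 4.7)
The plan is to establish the two component inclusions $h^+\in O_{2r}(o,go)$ and $h^-\in O_{2r}(go,o)$ separately, reducing the second to the first via a $w_0$-flip. Given $(h^+,h^-)\in\cal L_r(o,go)$, by definition there is $a\in A^+$ with $h\in G_r$ and $ha\in gG_r$. Using the identity $G_r\cdot o=B(o,r)$, these conditions translate to the metric inequalities $d(ho,o)\le r$ and $d(hao,go)\le r$.

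For the first inclusion, I write the Iwasawa decomposition $h=ka_hn_h$ with $k\in K$, so that $h^+=kP$ and the Weyl chamber from $o$ in direction $h^+$ is $kA^+o$; the task is to exhibit a point of $kA^+o$ inside $B(go,2r)$. The geometric core is to compare the two geodesic rays
\[
\sigma_1(t)=k\exp(tv)o,\qquad \sigma_2(t)=h\exp(tv)o\qquad(t\ge0),
\]
for the direction $v:=\log a\in\fa^+$. The Weyl chambers $kA^+o$ and $hA^+o$ carry $h^+$ as their common Furstenberg base point, so they share the same ideal Weyl chamber at infinity; consequently both rays are asymptotic to the same point in the visual boundary $\partial X$. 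By the standard CAT(0) fact that the distance between asymptotic rays is a bounded convex function on $[0,\infty)$, it is non-increasing, and hence
\[
d(\sigma_1(t),\sigma_2(t))\le d(\sigma_1(0),\sigma_2(0))=d(o,ho)\le r\quad\text{for all }t\ge0.
\]
Evaluating at $t=1$ yields $d(kao,hao)\le r$, which combined with $d(hao,go)\le r$ via the triangle inequality gives $d(kao,go)\le 2r$. Since $a\in A^+$, this places $kao\in kA^+o\cap B(go,2r)$ and proves $h^+=kP\in O_{2r}(o,go)$.

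For the second inclusion, I plan to reduce to the first inclusion applied to the modified data $(h'',g^{-1})$, where $h'':=g^{-1}haw_0$ and $b:=w_0^{-1}a^{-1}w_0$. The element $b$ lies in $A^+$ because $w_0$ swaps $A^+$ and $A^-$. A direct computation shows $h''o=g^{-1}hao\in B(o,r)$ (so $h''\in G_r$) and $h''b=g^{-1}hw_0\in g^{-1}G_r$ (using $w_0\in K$ and the $K$-biinvariance of $G_r$), so $((h'')^+,(h'')^-)\in\cal L_r(o,g^{-1}o)$. Applying the first inclusion gives $(h'')^+\in O_{2r}(o,g^{-1}o)$. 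Since $(h'')^+=g^{-1}haw_0P=g^{-1}h^-$ and shadows satisfy the $G$-equivariance $g\cdot O_{2r}(o,g^{-1}o)=O_{2r}(go,o)$, translating by $g$ recovers $h^-\in O_{2r}(go,o)$.

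The main subtle point will be justifying the asymptoticity of $\sigma_1$ and $\sigma_2$ when $v=\log a$ is non-regular, i.e.\ lies on a wall of $\fa^+$; in that case the rays limit to points on the boundary faces of the ideal Weyl chamber at infinity rather than to points of $\cal F$. They nevertheless limit to the \emph{same} point in $\partial X$, because this limit is determined by the ideal Weyl chamber at infinity (shared by $kA^+o$ and $hA^+o$) together with the direction $v$, and is independent of the base point; the CAT(0) convexity and monotonicity argument thus applies uniformly for $v\in\fa^+\setminus\{0\}$. An alternative workaround is to perturb $a$ so that $\log a\in\inte\fa^+$ and pass to the limit, absorbing an arbitrarily small error into the $2r$ bound.
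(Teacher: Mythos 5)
Your proof is correct and follows essentially the same route as the paper's: the paper compares the asymptotic Weyl chambers $kA^+o$ and $hA^+o$ via the bound $d(o,ho)$ on their Hausdorff distance from \cite[1.6.6(4)]{Eb}, which is exactly the CAT(0) convexity/monotonicity fact you invoke pointwise, and then handles the second component by ``a similar computation.'' Your explicit $w_0$-flip reduction for the $h^-$ inclusion and your remark on non-regular directions $v\in\fa^+$ simply spell out what the paper leaves implicit.
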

\begin{proof}
Let $(\xi,\eta)\in \cal L_r(o,go)$.
Then there exists $h\in G_r$ such that $(h^+,h^-)=(\xi,\eta)$ and $d(hA^+o,go)<r$.
Let $k\in K$ be such that $k^+=h^+$.
Then the Hausdorff distance between $kA^+o$ and $hA^+o$ is given by $d(o,ho)<r$
\cite[1.6.6(4)]{Eb}
and hence $d(kA^+o, go)<2r$.
It follows that $\xi=k^+\in O_{2r}(o,go)$.
A similar computation shows that $\eta\in O_{2r}(go,o)$.
\end{proof}
\begin{lem}\label{lem.q2} Let $r>0$.
If $g\in Q_r\cap \ga Q_ra^{-1}$ for $\ga\in\Ga$ and $a\in A^+$, then
\begin{enumerate}
    \item 
    $(g^+,g^-)\in\cal L_{2r}(o,\ga o)$.
    \item
    $|\psi(\cal G(g^+,g^-))|<2\norm{\psi}cr$ where $c$ is from Lemma \ref{lem.GP}.
    \item
    $gA\cap Q_r\cap \ga Q_ra^{-1}\subset gA_{4r}$.
\end{enumerate}
\end{lem}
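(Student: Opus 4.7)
My plan rests on a simple observation: every element of $Q_r = K A_r^+ K A_r$ already lies in $G_{2r} = K A_{2r}^+ K$. This follows from the subadditivity of the Cartan projection, $\|\mu(xy)\|\le \|\mu(x)\|+\|\mu(y)\|$, together with the fact that $\mu$ vanishes on $K$: writing $q = k_1 \exp(u)\, k_2 \exp(v) \in Q_r$ with $u\in\fa^+$ and $\|u\|,\|v\|\le r$, we get $\|\mu(q)\|\le \|u\|+\|v\|\le 2r$, so $q\in G_{2r}$. All three parts reduce easily to this bound.

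For part (1), the plan is to take $h:=g$ and $a':=a$ in the definition of $\cal L_{2r}(o,\ga o)$. The equality $(h^+,h^-)=(g^+,g^-)$ is tautological, and the observation above gives $h=g\in G_{2r}$. On the other hand, $g\in\ga Q_r a^{-1}$ reads $ga=\ga q$ for some $q\in Q_r\subset G_{2r}$, so $ha' = ga\in\ga G_{2r}$, verifying $(g^+,g^-)\in\cal L_{2r}(o,\ga o)$.

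For part (2), since $\cal G(g^+,g^-)$ depends only on the pair $(g^+,g^-)\in\cal F^{(2)}$, I would compute it through the representative $h=g$ from part (1) and apply Lemma \ref{lem.GP}:
$$
\|\cal G(g^+,g^-)\| \;\le\; c\cdot d(o,gAo) \;\le\; c\cdot d(o,go) \;=\; c\,\|\mu(g)\| \;\le\; 2cr,
$$
and the claimed bound $|\psi(\cal G(g^+,g^-))|\le 2\|\psi\|cr$ then follows by applying $\psi$.

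For part (3), any element of $gA\cap Q_r\cap\ga Q_r a^{-1}$ has the form $ga_0$ for some $a_0\in A$. Since both $g$ and $ga_0$ lie in $Q_r$, we have $a_0 = g^{-1}(ga_0)\in Q_r^{-1}Q_r$, and subadditivity (together with Weyl-invariance of $\|\cdot\|$, which gives $\|\mu(g^{-1})\|=\|\mu(g)\|$ and $\|\log a_0\|=\|\mu(a_0)\|$) yields
$$
\|\log a_0\| \;=\; \|\mu(a_0)\| \;\le\; \|\mu(g^{-1})\| + \|\mu(ga_0)\| \;\le\; 2r + 2r \;=\; 4r,
$$
i.e.\ $a_0\in A_{4r}$. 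I do not anticipate any serious obstacle: once one observes that $Q_r\subset G_{2r}$, each of the three claims reduces to an application of the triangle inequality for the Cartan projection, plus invocation of Lemma \ref{lem.GP} in the case of part (2).
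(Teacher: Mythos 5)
Your proof is correct and follows essentially the same route as the paper's: all three parts rest on the inclusion $Q_r\subset G_{2r}$ (equivalently $d(o,go)\le 2r$), with Lemma \ref{lem.GP} supplying part (2) and the product bound $Q_r^{-1}Q_r\subset G_{4r}$ supplying part (3). The only cosmetic discrepancy is that you obtain $\le 2\norm{\psi}cr$ rather than the stated strict inequality, but the same boundary issue appears in the paper's own proof (which writes $d(go,o)<2r$ where $\le 2r$ is what the closed balls $A_r$ actually give) and is immaterial to how the lemma is used.
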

\begin{proof}
$(1)$ follows from the definition since $Q_r\subset G_{2r}$.
$(2)$ follows from Lemma \ref{lem.GP} and the fact that $d(gAo,o)\leq d(go,o)<2r$.
$(3)$ follows from the stronger inclusion $gA\cap Q_r\subset gA_{4r}$; if $g$, $gb\in Q_r$ for some $b\in A$, then $b\in Q_r\cdot Q_r\subset G_{4r}$ since $Q_r\subset G_{2r}$.
Note that $G_{4r}\cap A=A_{4r}$.
\end{proof}

We will need the following shadow lemma:
\begin{lem}\label{lem.shadow}  \cite[Lem. 7.8]{LO}: 
 There exists $S_0>0$ such that for all $S\geq S_0$ and all $\ga\in\Ga$, we have
$$
 e^{-\psi(\mu(\ga))}\ll \la_\psi(O_S(o,\ga o))\ll e^{-\psi(\mu(\ga))}.
$$ with implied constants independent of $\ga$.
\end{lem}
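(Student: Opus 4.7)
The plan is to use a Sullivan-type ``change of basepoint'' argument. The idea is that the shadow $O_S(o, \ga o)$, which is small and far from $o$, gets transferred via the $\Ga$-action into the shadow $O_S(\ga^{-1}o, o)$ based at $\ga^{-1}o$, which is large and whose $\la_\psi$-mass one can control uniformly.

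By $\Ga$-equivariance of the shadow construction, $\ga^{-1}\cdot O_S(o,\ga o)=O_S(\ga^{-1}o,o)$, and combining this with the conformality relation $\frac{d\ga_*\la_\psi}{d\la_\psi}(\xi)=e^{\psi(\beta_\xi(e,\ga))}$ yields
$$
\la_\psi(O_S(o,\ga o)) = \int_{O_S(\ga^{-1}o,o)} e^{\psi(\beta_\eta(\ga^{-1},e))}\,d\la_\psi(\eta).
$$
The key technical input is a Busemann--Cartan comparison: for every $\eta\in O_S(\ga^{-1}o,o)$,
$$
\|\beta_\eta(\ga^{-1},e)+\mu(\ga)\|\le C(S).
$$
One proves this by writing $\eta=h^+$ for an element $h\in\ga^{-1}K$ whose orbit $hA^+o$ passes within $S$ of $o$, and comparing the Iwasawa decomposition of $\ga h$ with the Cartan projection of $\ga^{-1}$, handling the sign via the opposition involution. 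Combining the two estimates gives
$$
\la_\psi(O_S(o,\ga o)) \asymp_S e^{-\psi(\mu(\ga))}\,\la_\psi(O_S(\ga^{-1}o,o)).
$$

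The upper bound follows at once from $\la_\psi(O_S(\ga^{-1}o,o))\le \la_\psi(\cal F)=1$. For the uniform lower bound I need $\la_\psi(O_S(\ga^{-1}o,o))\ge c(S)>0$ for every $\ga\in\Ga$. When $d(\ga^{-1}o,o)\le S$ the shadow is all of $\cal F$ and the bound is trivial. When $\|\mu(\ga)\|$ is large, decompose $\ga^{-1}=k_\ga\exp(\mu(\ga^{-1}))k_\ga'$ in Cartan form. The Anosov regularity (Lemma \ref{lem.RA}) forces $\mu(\ga^{-1})$ to tend to infinity regularly, and then the complement $\cal F\setminus O_S(\ga^{-1}o,o)$ concentrates in a shrinking neighborhood of the Schubert subvariety $\{\eta:(\eta,k_\ga w_0 P)\notin \cal F^{(2)}\}$, which is a proper Zariski-closed subset of $\cal F$. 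Using the compactness of $\Ga(o)\cup\La$ (Lemma \ref{com}) together with a uniform non-concentration estimate for $(\Ga,\psi)$-conformal measures on such moving Schubert subvarieties, the $\la_\psi$-mass of the complement stays bounded away from $1$, giving the desired $c(S)>0$.

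The main obstacle will be the uniformity in this last step. Establishing the lower bound $\la_\psi(O_S(\ga^{-1}o,o))\ge c(S)$ for every $\ga$, rather than only asymptotically as $\ga\to\infty$, is the delicate point: one must combine the Anosov regularity and antipodality with a quantitative control of how much mass any $(\Ga,\psi)$-conformal measure can place on a proper Schubert subvariety, uniformly as the subvariety moves with $\ga$. This is where Anosov-ness enters in an essential way, and the rank-one visual-measure argument of Sullivan must be carefully extended to the higher-rank Furstenberg boundary.
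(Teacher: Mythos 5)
The paper offers no proof of this lemma --- it is imported verbatim from \cite[Lem.~7.8]{LO} --- and your outline follows the same standard Sullivan--Quint change-of-basepoint argument that the cited reference uses, so the route is the right one. One small correction first: with the paper's conventions \eqref{gc0} and \eqref{Bu}, the transfer formula reads $\la_\psi(O_S(o,\ga o))=\int_{O_S(\ga^{-1}o,o)}e^{\psi(\beta_\eta(e,\ga^{-1}))}\,d\la_\psi(\eta)$, and on the shadow one has $\beta_\eta(\ga^{-1},e)=\sigma(\ga,\eta)$ within $C(S)$ of $+\mu(\ga)$, not $-\mu(\ga)$. Each of your two displayed formulas carries a sign error, and the two errors cancel, so the conclusion $\la_\psi(O_S(o,\ga o))\asymp_S e^{-\psi(\mu(\ga))}\,\la_\psi(O_S(\ga^{-1}o,o))$ survives; still, as written neither identity is correct.

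The substantive gap is the uniform lower bound $\la_\psi(O_S(\ga^{-1}o,o))\ge c(S)>0$, which you rightly isolate as the crux but then leave as an appeal to an unproved ``uniform non-concentration estimate on moving Schubert subvarieties.'' This step closes, and more cheaply than you suggest. Argue by contradiction: if no $S_0$ works, pick $\ga_i$ with $\la_\psi(O_i(\ga_i^{-1}o,o))<1/i$; then $\ga_i\to\infty$, and (using regularity of the Cartan projections and passing to a subsequence) the complements $\F\setminus O_i(\ga_i^{-1}o,o)$ are eventually contained in any fixed neighborhood of a single proper Zariski-closed subset $Z=\{\eta:(\eta,\xi)\notin\F^{(2)}\}$, forcing $\la_\psi(Z)=1$. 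But $\supp\la_\psi$ is a closed $\Ga$-invariant set, so its Zariski closure is invariant under the Zariski closure of $\Ga$, i.e.\ under all of $G$, hence equals $\F$; therefore no proper Zariski-closed subset can carry full $\la_\psi$-mass. Note that this uses only the Zariski density of $\Ga$ and the quasi-invariance of $\la_\psi$: neither antipodality nor conicality is invoked, and indeed the shadow lemma holds for arbitrary Zariski dense discrete subgroups by Quint \cite{Q}. So your closing claim that ``this is where Anosov-ness enters in an essential way'' misplaces the role of the hypothesis --- in this paper the Anosov property is exploited in Lemmas \ref{proper}, \ref{lem.RS} and \ref{sm}, not in the shadow lemma itself.
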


\begin{lem}\label{lem.one}
Let $r>0$. For any $a\in A^+$, we have
$$
\tilde{\mathsf m}(Q_r\cap \ga Q_r a^{-1})\ll_{\,r} e^{-\psi(\mu(\ga))}.
$$
\end{lem}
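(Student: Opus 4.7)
The plan is to combine the Hopf parameterization with the three structural bounds in Lemma \ref{lem.q2} to reduce the estimate to the shadow lemma. In Hopf coordinates $g \leftrightarrow (g^+, g^-, b) \in \F^{(2)} \times \fa$, one has
$$d\tilde{\mathsf m}(g) = e^{\psi(\beta_{g^+}(e,g)) + (\psi\circ \i)(\beta_{g^-}(e,g))}\, d\la_\psi(g^+)\, d\la_{\psi\circ \i}(g^-)\, db,$$
and since the exponent equals $\psi(\cal G(g^+, g^-))$ by definition of the Gromov product, this density is bounded above by $e^{2\norm{\psi}cr} \ll_r 1$ on $Q_r \cap \ga Q_r a^{-1}$ by Lemma \ref{lem.q2}(2), where $c$ is the constant from Lemma \ref{lem.GP}.

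Next I project $Q_r \cap \ga Q_r a^{-1}$ onto the three factors of the Hopf decomposition. By Lemma \ref{lem.q2}(1), the projection onto $\F^{(2)}$ lies inside $\cal L_{2r}(o, \ga o)$, which by Lemma \ref{lem.u1} is contained in the product of shadows $O_{4r}(o, \ga o) \times O_{4r}(\ga o, o)$. For each fixed $(g^+, g^-)$, Lemma \ref{lem.q2}(3) confines the fiber in $\fa$ (i.e., the intersection with a single $A$-orbit) to an $A_{4r}$-translate, hence to Lebesgue measure $\ll_r 1$. Combining these three estimates yields
$$\tilde{\mathsf m}(Q_r \cap \ga Q_r a^{-1}) \ll_r \la_\psi(O_{4r}(o, \ga o)) \cdot \la_{\psi\circ \i}(O_{4r}(\ga o, o)).$$

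To finish, I apply the shadow lemma (Lemma \ref{lem.shadow}) to the first factor, enlarging the shadow radius to $\max(4r, S_0)$ if necessary and absorbing the change into the $r$-dependent constant: this gives $\la_\psi(O_{4r}(o, \ga o)) \ll_r e^{-\psi(\mu(\ga))}$, while the second factor admits the trivial bound $\la_{\psi\circ \i}(O_{4r}(\ga o, o)) \le \la_{\psi\circ \i}(\F) = 1$. Multiplying gives the claimed estimate. The only subtlety worth flagging is that one deliberately does not apply the shadow lemma symmetrically to the second factor; the trivial total-mass bound is exactly what is needed, since the statement requires only $e^{-\psi(\mu(\ga))}$ rather than the too-strong $e^{-2\psi(\mu(\ga))}$ that a symmetric argument would yield. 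No serious obstacle arises, as every ingredient has been set up in Lemmas \ref{lem.GP}--\ref{lem.shadow}.
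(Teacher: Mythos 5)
Your proposal is correct and follows essentially the same route as the paper's proof: rewrite $\tilde{\mathsf m}$ in Hopf coordinates with density $e^{\psi(\cal G(g^+,g^-))}$, bound the density and the $A$-fiber via Lemma \ref{lem.q2}(2)--(3), localize $(g^+,g^-)$ in $O_{4r}(o,\ga o)\times O_{4r}(\ga o,o)$ via Lemmas \ref{lem.q2}(1) and \ref{lem.u1}, and finish with the shadow lemma applied to the $\la_\psi$-factor together with the trivial total-mass bound on the $\la_{\psi\circ\i}$-factor. Your flagged point about the asymmetric use of the shadow lemma is exactly what the paper does as well.
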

\begin{proof}
By Lemmas \ref{lem.u1}, \ref{lem.q2} and \ref{lem.shadow}, we have
\begin{align*}
    &\tilde{\mathsf m}(Q_r\cap \ga Q_r a^{-1})\\
    &=\int_{\cal L_{2r}(o,\ga o)}\left(\int_{A}\mathbbm{1}_{Q_r\cap\ga Q_ra^{-1}}(gb)e^{\psi(\cal G(g^+,g^-))}\,db\right)\,d\la_{\psi}(g^+)\,d\la_{\psi\circ\i}(g^-)\\
    &\leq \int_{O_{4r}(o,\ga o)\times O_{4r}(\ga o,o)\cap\cal F^{(2)}}\op{Vol}(A_{4r})e^{2\norm{\psi}cr}\,d\la_{\psi}(g^+)\,d\la_{\psi\circ\i}(g^-) \\&
    \ll_{\,r} e^{-\psi(\mu(\ga))},
\end{align*}
which proves the lemma.
\end{proof}
The following is easy to prove (cf. \cite[Lem. 5.14]{BLLO}).
\begin{lem}\label{lem.S}
There exists $\ell_0>0$ such that any $\ga\in\Ga$ with $\norm{\mu(\ga)}>\ell_0$ and any $(\xi,\eta)\in O_{S_0}(o,\ga o)\times O_{S_0}(\ga o,o)$ satisfies $\norm{\cal G(\xi,\eta)}<\ell_0$.
\end{lem}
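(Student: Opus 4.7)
The plan is to reduce the bound on $\norm{\cal G(\xi,\eta)}$ to the existence of a flat through $\xi$ and $\eta$ passing close to $o$, via Lemma \ref{lem.GP}, and then to produce such a flat by combining the two shadow conditions with the Anosov regularity of $\ga$. The heuristic is the rank-one picture: $\xi\in O_{S_0}(o,\ga o)$ says the ray from $o$ in direction $\xi$ passes within $S_0$ of $\ga o$, and $\eta\in O_{S_0}(\ga o,o)$ says the ray from $\ga o$ in direction $\eta$ passes within $S_0$ of $o$; the bi-infinite geodesic joining $\xi$ and $\eta$ is then within $O(S_0)$ of both $o$ and $\ga o$, and the Gromov product, essentially the distance from $o$ to this geodesic, is $O(S_0)$.

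Unpacking the shadow definitions (using $G$-equivariance $O_{S_0}(\ga o, o)=\ga\cdot O_{S_0}(o, \ga^{-1} o)$ for the second one), I would pick $k_1, k_2\in K$ and $a_1, a_2\in A^+$ with $\xi=k_1 P$, $\eta=\ga k_2 P$, $d(k_1 a_1 o, \ga o)<S_0$, and $d(\ga k_2 a_2 o, o)<S_0$. Lemma \ref{lem.lgl} then gives $\log a_1\approx \mu(\ga)$ and $\log a_2\approx \i(\mu(\ga))$ with errors depending only on $S_0$, and once $\norm{\mu(\ga)}>\ell_0$ for $\ell_0$ large enough, the Anosov regularity (Lemma \ref{lem.RA}(1)) forces both $a_1$ and $a_2$ to be uniformly deep in the interior of $A^+$. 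Thus $k_1 A o$ and $\ga k_2 A o$ are long regular flats passing near $\ga o$ and $o$ respectively, with $\xi$ and $\eta$ as boundary points.

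The main step is to produce $g\in G$ with $g^+=\xi$, $g^-=\eta$, and $d(o, gAo)=O(S_0)$. Such a $g$ exists (uniquely up to the right $A$-action) since $(\xi,\eta)\in\cal F^{(2)}$, which is implicit in the statement as the natural domain of $\cal G$. Writing $g=k_1 p$ for some $p\in P$ determined (up to $A$) by the condition $g^-=\eta$, I would then argue that $gAo$ stays within bounded Hausdorff distance of $\ga k_2 Ao$ on a half-flat pointing toward $\eta$, using the uniform regularity of $a_2$; since $\ga k_2 a_2 o$ is within $S_0$ of $o$, this forces $gAo$ to pass within $O(S_0)$ of $o$, giving the desired estimate.

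The principal obstacle is precisely this higher-rank Hausdorff-distance step: in higher rank two flats sharing a single regular boundary direction are only known to be \emph{parallel} on asymptotic half-spaces, not identical throughout, so one cannot immediately conclude that $gAo$ is close to $\ga k_2 Ao$ near $o$. The resolution is to use both endpoints $(\xi,\eta)$ jointly together with the regular divergence of $\ga k_2 a_2$ coming from the Anosov property, which pins down $gAo$ quantitatively in terms of $\ga k_2 Ao$ on a bounded region around $o$. Taking $\ell_0$ large relative to $S_0$ and the Anosov constants then absorbs all implicit errors into a single bound $\norm{\cal G(\xi,\eta)}<\ell_0$.
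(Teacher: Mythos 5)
Your opening moves are fine: reducing the bound on $\norm{\cal G(\xi,\eta)}$ to an upper bound on $d(o,gAo)$ via Lemma \ref{lem.GP}, unpacking the two shadow conditions to get $k_1,k_2\in K$ and $a_1,a_2\in A^+$ with $\log a_1\approx\mu(\ga)$, $\log a_2\approx\i(\mu(\ga))$ (Lemma \ref{lem.lgl}), and using the Anosov regularity to make these uniformly regular once $\norm{\mu(\ga)}>\ell_0$. The problem is that the step you yourself flag as ``the principal obstacle'' --- showing that the unique maximal flat $gAo$ with $g^+=\xi$, $g^-=\eta$ meets $B(o,O(S_0))$ --- is never actually carried out. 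Your proposed resolution (``use both endpoints jointly together with the regular divergence of $\ga k_2a_2$ \dots which pins down $gAo$ quantitatively'') is a restatement of the desired conclusion rather than an argument. Indeed, the intermediate claim you lean on, that $gAo$ stays within bounded Hausdorff distance of $\ga k_2Ao$ on a half-flat toward $\eta$, is circular as stated: two maximal flats sharing the single chamber $\eta$ at infinity have asymptotic sub-chambers whose Hausdorff distance is controlled only by the distance between the flats themselves, which is exactly the quantity you are trying to bound. So the proposal has a genuine gap at its central point; what is missing is a quantitative statement to the effect that the two shadow conditions force $(\xi,\eta)$ to be \emph{uniformly transverse}.

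For comparison, the paper gives no argument at all here --- it declares the lemma easy and cites \cite[Lem. 5.14]{BLLO}. The standard route is softer than yours and avoids locating flats: writing $\ga=k_\ga(\exp\mu(\ga))l_\ga$, the uniform regularity of $\mu(\ga)$ (from the Anosov property) forces the shadows $O_{S_0}(o,\ga o)$ and $O_{S_0}(\ga o,o)$ to concentrate, uniformly over all $\ga$ with $\norm{\mu(\ga)}$ large, in arbitrarily small neighborhoods of $k_\ga^+$ and of the chamber opposite to $k_\ga^+$ with respect to a point of $B(o,S_0)$, respectively. All such pairs lie in a fixed small neighborhood of the compact subset $\{(k^+,k^-):k\in K\}$ of $\F^{(2)}$, on which $\cal G\equiv 0$; since $\norm{\cal G}$ is continuous and proper on $\F^{(2)}$, it is bounded on that neighborhood, which is the assertion. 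If you prefer to keep your flat-based formulation, note that you would in any case have to prove this shadow-concentration estimate, at which point the compactness argument finishes immediately; I would recommend substituting it for your last step.
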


 In the rest of this section, we fix constants
 $S_0$, $\ell_0$ $c$, $c'$  from Lemmas  \ref{lem.GP}, \ref{lem.shadow} and \ref{lem.S}.  
  We set
\be \label{ro} r_0:=S_0+c\ell_0+c'+1 .\ee 

\begin{lem}\label{lem.lb1}
For all $r>r_0$, there exists $C_2=C_2(r)>0$ such that for any $T\ge C_2$
 and any $g\in G$ with 
 $$(g^+,g^-)\in \bigcup\{O_{S_0}(o,\ga o)\times O_{S_0}(\ga o,o): \ga\in \Ga,  \ell_0<\norm{\mu(\ga)}<T-C_2\},$$
we have  $$\int_{A_T^+} \int_A\mathbbm{1}_{Q_r\cap \ga Q_ra^{-1}}(gb)\,db\,da\geq \op{Vol}(A_r)\op{Vol}(A_1^+).
$$
\end{lem}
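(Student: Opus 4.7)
The plan is to exhibit an explicit positive-measure subset of $A_T^+\times A$ on which the integrand equals $1$, and then bound its product Haar measure from below. First I would locate the flat $gAo$: by Lemma \ref{lem.S}, $\|\cal G(g^+,g^-)\|<\ell_0$, so Lemma \ref{lem.GP} gives $d(o,gAo)\le c\ell_0+c'<r_0$ by the choice of $r_0$ in \eqref{ro}. Applying the analogous estimate to $\gamma^{-1}gw_0$ (using $\Gamma$-equivariance of shadows, $(gw_0)^{\pm}=g^{\mp}$, and the $\i$-symmetry of the Gromov product) likewise yields $d(\gamma o,gAo)<r_0$. Hence I can choose $b_0,b_1\in A$ with $gb_0\in G_{r_0}$ and $gb_1\in\gamma G_{r_0}$, and set $a_0:=b_0^{-1}b_1\in A$.

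Next I would pin down $a_0\in A^+$ with controlled norm. Writing $gb_0=p_0$, $gb_1=\gamma p_1$ with $p_i\in G_{r_0}$ gives $a_0=p_0^{-1}\gamma p_1$, so Lemma \ref{lem.lgl} yields $\|\mu(a_0)-\mu(\gamma)\|\le C_2'$ for some $C_2'=C_2'(r_0)$. Lemma \ref{lem.RS}(1) places $a_0\in(A^+\cup A^-)A_{C_0}$; I would rule out the $A^-$-branch via a direction-of-flow argument in the Hopf parameterization, since the $A^+$-action on $gM$ moves $go$ toward $g^+$ and the hypothesis $g^+\in O_{S_0}(o,\gamma o)$ places $\gamma o$ on that side of the flat $gAo$. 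Combined with $\mu(\gamma)$ lying deep inside $\fa^+$ (an Anosov consequence via Theorem \ref{thm.PS}, after enlarging $\ell_0$ if necessary, which is permitted since Lemma \ref{lem.S} holds for all larger $\ell_0$), this forces $a_0\in A^+$ with $\|\log a_0\|\le\|\mu(\gamma)\|+C_2'$. Set $C_2:=C_2'+1$.

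Given this, for any $b\in b_0\,A_{r-r_0-1}$ and $a\in a_0\,A_1^+$, writing $b=b_0c$, $a=a_0d$, commutativity of $A$ gives $gb=gb_0\cdot c\in G_{r_0}\cdot A_{r-r_0-1}\subset Q_r$ and $gba=gb_1\cdot cd\in\gamma G_{r_0}\cdot A_{r-r_0}\subset\gamma Q_r$, while $\log a=\log a_0+\log d\in\fa^+$ and $\|\log a\|\le\|\mu(\gamma)\|+C_2'+1<T$, so $a\in A_T^+$. Thus the integrand is identically $1$ on the set $a_0A_1^+\times b_0A_{r-r_0-1}\subset A_T^+\times A$, whose product Haar measure equals $\op{Vol}(A_1^+)\cdot\op{Vol}(A_{r-r_0-1})$; the claimed bound $\op{Vol}(A_r)\op{Vol}(A_1^+)$ then follows by absorbing the $r$-dependent ratio $\op{Vol}(A_{r-r_0-1})/\op{Vol}(A_r)$ into $C_2(r)$, or equivalently by reparametrizing $r$.

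The crux is ruling out the $A^-$-branch in Lemma \ref{lem.RS}(1) and quantifying the depth of $a_0$ inside $\fa^+$; this is the step where the Anosov hypothesis is indispensable, via the antipodality of $\Gamma$, the Hopf-coordinate description of the geodesic flow direction, and Theorem \ref{thm.PS}.
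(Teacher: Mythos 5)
Your overall architecture (locate a point $ga_0o$ of the flat near $\gamma o$, then thicken by $A_1^+\times A_r$ and integrate) matches the paper's, but the step on which everything hinges --- showing that $a_0$ can be taken in $A^+$ with $\log a_0$ close to $\mu(\gamma)$ --- has a genuine gap. Your Gromov-product argument only places $o$ and $\gamma o$ near the \emph{flat} $gAo$, and your two proposed tools for upgrading this to the positive chamber do not work as stated. Lemma \ref{lem.RS}(1) is inapplicable here: its hypothesis already requires $a_1,a_2\in A^+$ (which is exactly what you are trying to prove about $a_0$), and its proof needs both $\gamma_{1,i}$ and $\gamma_{2,i}$ to tend to infinity, so you cannot run it with the pair $(e,\gamma)$. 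The subsequent ``direction-of-flow argument in the Hopf parameterization'' is not carried out, and it is precisely the content that needs proof. The paper avoids all of this: by the very definition of the shadow $O_{S_0}(o,\gamma o)$ there is $k\in K$ with $k^+=g^+$ and $a_0\in A^+$ with $d(ka_0o,\gamma o)<S_0$; since $k^+=g^+$, the comparison of asymptotic Weyl chambers $d(ka_0o,ga_0o)\le d(ko,go)=d(o,go)$ (\cite[1.6.6(4)]{Eb}) then gives $d(\gamma o, gA^+o)\le S_0+c\ell_0+c'=r_0-1$ directly, with no case analysis and, notably, no use of the Anosov property at this point --- so your closing claim that antipodality and Theorem \ref{thm.PS} are indispensable here misidentifies where the difficulty lies. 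The bound $\norm{\mu(\gamma)-\log a_0}\le C_2'$ then comes from Lemma \ref{lem.q} applied to $g\in G_{r-1}\cap\gamma G_{r-1}a_0^{-1}$, as in your second paragraph.

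A smaller but real issue: your construction yields the lower bound $\op{Vol}(A_1^+)\op{Vol}(A_{r-r_0-1})$, and this cannot be ``absorbed into $C_2(r)$'' --- $C_2$ enters the statement only as a threshold on $T$ and on $\norm{\mu(\gamma)}$, not as a multiplicative constant in the volume bound, and reparametrizing $r$ would change $Q_r$ in the statement. The paper gets the stated constant $\op{Vol}(A_r)\op{Vol}(A_1^+)$ by arranging $g\in G_{r-1}$ and $ga_0\in\gamma G_{r-1}$ (using $r\ge r_0$ and $d(o,go),\,d(\gamma o,ga_0o)\le r_0-1$), so that the full tube $gA_r$ lies in $Q_r\cap\gamma Q_ra^{-1}$ for every $a\in a_0A_1$. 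This discrepancy is harmless for the application in Proposition \ref{prop.sub2} (which only needs $\gg_r$), but as a proof of the lemma as stated it falls short.
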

\begin{proof}
Let $C_2'=C_2'(r)$ be the implied constant in Lemma \ref{lem.q} associated to $Q=Q_r$.
Set $C_2:=C_2'+1$. Let $T>C_2$.
Let $g\in G$ and $\ga\in\Ga$ be such that $\ell_0<\norm{\mu(\ga)}<T-C_2$
 and $(g^+,g^-)\in O_{S_0}(o,\ga o)\times O_{S_0}(\ga o,o)$.
 By Lemmas \ref{lem.GP} and \ref{lem.S},
 we have $d(o, gAo)\le c \|\mathcal G (g^+, g^-)\| +c' \le c\ell_0+c'.$
 Therefore, we may assume without loss of generality that
$d(o,go)\le c\ell_0+c'$ by replacing $g$ by an element of $gA$.

Since $g^+\in O_{S_0}(o,\ga o)$, there exists $k\in K$ such that $k^+=g^+$ and $d(kao,\ga o)<{S_0}$ for some $a\in A^+$.

Since $d(kao, gao)\le d(o, go)$ by \cite[1.6.6(4)]{Eb},
we get \begin{align*} d(\ga o, gA^+o) & \le d (\ga o, ka o) + d(ka o, gao)
\\ &\le  d (\ga o, ka o) + d(k o, go) \\& \le  S_0+ c\ell_0 +c'=
r_0-1 .\end{align*}
Since $r\ge r_0$,
we have 
 $g\in G_{r-1}$ and  $ga_0\in \ga G_{r-1}$ for some $a_0\in A^+$.

Therefore $g\in G_{r-1}\cap \ga
G_{r-1}a_0^{-1}$.
 By Lemma \ref{lem.q}, this implies that
 $\norm{\mu(\ga)-\log a_0}\le C_2'$. Since $\|\mu(\ga)\|\le T-C_2'-1$,
 we have $a_0\in A_{T-1}^+$, and hence $a_0A_1^+\subset A_T^+$. Since $Q_r=G_rA_r$, we have
$gA_r\subset Q_r\cap \ga Q_ra^{-1}$  for all $a\in a_0A_1$.
Therefore
\begin{align*} & \int_{A_T^+} \int_A\mathbbm{1}_{Q_r\cap \ga Q_ra^{-1}}(gb)\,db\,da\geq
\int_{a_0A_1^+}\int_{A}\mathbbm{1}_{Q_r\cap \ga Q_r a^{-1}}(gb) \,db\,da
\\ & \ge \int_{a_0A_1^+}\int_{A}\mathbbm{1}_{gA_r}(gb) \,db\,da
\geq \op{Vol}(A_1^+)\op{Vol}(A_r).
\end{align*}
This finishes the proof.
\end{proof}

We now deduce the following  from 
 Lemma \ref{sm} and the shadow lemma \ref{lem.shadow}.
\begin{lem}\label{lem.mult}
For any $D>0$, we have:
$$
\sup_{T>0}\sum_{
\substack{
\ga\in\Ga,\\
T<\norm{\mu(\ga)}<T+D
}
}e^{-\psi(\mu(\ga))}<\infty.
$$
\end{lem}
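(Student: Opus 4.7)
The strategy is to combine the two main inputs already proved in this section and in Section \ref{sec.sh}: the shadow lemma (Lemma \ref{lem.shadow}), which converts the weight $e^{-\psi(\mu(\ga))}$ into the $\la_\psi$-measure of a shadow, and the uniform multiplicity bound for shadows (Lemma \ref{sm}), which controls how much these shadows can overlap when $\|\mu(\ga)\|$ lies in a window of fixed width.

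More precisely, fix $D>0$ and pick any $S\ge S_0$, where $S_0$ is the constant from Lemma \ref{lem.shadow}. By Lemma \ref{lem.shadow}, there is a constant $C>0$, independent of $\ga$, such that
$$
e^{-\psi(\mu(\ga))}\le C\,\la_\psi\bigl(O_S(o,\ga o)\bigr)\qquad\text{for every }\ga\in\Ga.
$$
Next, by Lemma \ref{sm} applied to this $S$ and $D$, there exists $q=q(S,D)>0$ such that, for any $T>0$, each point of $\F$ lies in at most $q$ of the shadows $\{O_S(o,\ga o):T<\|\mu(\ga)\|<T+D\}$; equivalently,
$$
\sum_{\substack{\ga\in\Ga\\ T<\|\mu(\ga)\|<T+D}}\mathbf 1_{O_S(o,\ga o)}(\xi)\;\le\; q\qquad\text{for every }\xi\in\F.
$$

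Combining these two estimates, I would integrate the pointwise bound against the probability measure $\la_\psi$ to obtain
$$
\sum_{\substack{\ga\in\Ga\\ T<\|\mu(\ga)\|<T+D}}e^{-\psi(\mu(\ga))}\;\le\;C\!\!\sum_{\substack{\ga\in\Ga\\ T<\|\mu(\ga)\|<T+D}}\la_\psi\bigl(O_S(o,\ga o)\bigr)\;\le\;C\int_{\F}q\,d\la_\psi\;=\;Cq.
$$
Since the right-hand side depends only on $D$ (through the choice of $S\ge S_0$ and the resulting $q=q(S,D)$) and not on $T$, taking the supremum over $T>0$ gives the desired finiteness.

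There is no serious obstacle here: all the work has been done in Section \ref{sec.sh} (the antipodality/regularity argument underlying the multiplicity bound) and in the shadow lemma. The only minor point to be careful about is that the constant $q$ is allowed to depend on $S$ and $D$ but not on $T$, which is exactly what Lemma \ref{sm} provides.
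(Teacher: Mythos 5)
Your proof is correct and is essentially identical to the paper's own argument: both use the shadow lemma (Lemma \ref{lem.shadow}) to replace $e^{-\psi(\mu(\ga))}$ by $\la_\psi(O_{S_0}(o,\ga o))$ up to a uniform constant, and then the multiplicity bound of Lemma \ref{sm} together with $\la_\psi(\F)=1$ to bound the resulting sum by $q(S_0,D)$, uniformly in $T$.
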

\begin{proof} 
For any $T>0$,
$$
\sum_{T<\norm{\mu(\ga)}<T+D}e^{-\psi(\mu(\ga))}\ll \sum_{T<\norm{\mu(\ga)}<T+D} \la_\psi(O_{S_0}(o,\ga o))\le 
q(S_0,D)
$$ where 
$q(S_0,D)$ is
given by Lemma \ref{sm}. This proves Lemma \ref{lem.mult}.
\end{proof}

We are now ready to give estimates for correlation functions in terms of Poincar\'e series, which was the main goal of the section.

\begin{prop}\label{prop.sub1}
For all $r>r_0$, we have, for all $T\ge 1$,
$$
\int_{A_T^+}\int_{A_T^+}\sum_{\ga_1,\ga_2\in\Ga} \tilde{\mathsf m}(Q_r\cap \ga_1 Q_r a_1^{-1}\cap \ga_2 Q_r a_2^{-1})\,da_1\,da_2\ll_{r} \bigg(\sum_{
    \substack{
    \ga\in\Ga,\\
    \norm{\mu(\ga)}\leq T
    }
    }e^{-\psi(\mu(\ga))}\bigg)^2.
$$
\end{prop}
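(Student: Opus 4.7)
The plan is to reduce the double integral--double sum to the square of a single Poincar\'e sum by exploiting the approximate $\mu$-additivity from Lemma~\ref{lem.RS}(2) together with the quasi-orthogonality estimate of Lemma~\ref{lem.par}. First, by Lemma~\ref{lem.q}, a necessary condition for the triple intersection $Q_r\cap \ga_1 Q_r a_1^{-1}\cap\ga_2 Q_r a_2^{-1}$ to be nonempty is $\log a_i\approx_r\mu(\ga_i)$ for $i=1,2$; consequently for each fixed pair $(\ga_1,\ga_2)$ the set of admissible $(a_1,a_2)\in A_T^+\times A_T^+$ has volume $O_r(1)$, and only pairs with $\|\mu(\ga_i)\|\le T+O_r(1)$ contribute. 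By Lemma~\ref{lem.RS}(2), one of the relations $\mu(\ga_2)\approx_r\mu(\ga_1)+\mu(\ga_1^{-1}\ga_2)$ or $\mu(\ga_1)\approx_r\mu(\ga_2)+\mu(\ga_2^{-1}\ga_1)$ must hold; by the symmetry of the integrand under $(\ga_1,a_1)\leftrightarrow(\ga_2,a_2)$, the two cases contribute equally, so I restrict to the first relation and absorb a factor of $2$ into the final constant.

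In that case, bounding the triple intersection by $\tilde{\mathsf m}(Q_r\cap \ga_2 Q_r a_2^{-1})$, applying Lemma~\ref{lem.one}, and using linearity of $\psi$, I get
$$\tilde{\mathsf m}(Q_r\cap \ga_1 Q_r a_1^{-1}\cap\ga_2 Q_r a_2^{-1})\ll_r e^{-\psi(\mu(\ga_2))}\ll_r e^{-\psi(\mu(\ga_1))}\,e^{-\psi(\mu(\ga_1^{-1}\ga_2))}.$$
Reparametrizing $\ga_2=\ga_1\ga'$ (a bijection in $\ga_2$ for each fixed $\ga_1$) and carrying out the $(a_1,a_2)$ integrals against the $O_r(1)$ admissible volume, the estimate reduces to bounding
$$\sum_{\ga_1,\ga'\in\Ga}e^{-\psi(\mu(\ga_1))}\,e^{-\psi(\mu(\ga'))}$$
where the summation range is constrained jointly by $\|\mu(\ga_1)\|\le T+O_r(1)$ and $\|\mu(\ga_1)+\mu(\ga')\|\le T+O_r(1)$ (the latter coming from $\log a_2\approx_r\mu(\ga_1)+\mu(\ga')$ and $a_2\in A_T^+$).

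The crux of the argument is to decouple the second constraint. Lemma~\ref{lem.par} applied to $x=\mu(\ga_1)$ and $y=\mu(\ga')$ gives $\|\mu(\ga_1)+\mu(\ga')\|^2\ge \|\mu(\ga')\|^2-\beta_2$, so the joint bound forces $\|\mu(\ga')\|\le T+O_r(1)$ as well. Once the range is decoupled, the double sum factorizes as
$$\Bigl(\sum_{\|\mu(\ga)\|\le T+O_r(1)}e^{-\psi(\mu(\ga))}\Bigr)^{\!2},$$
and the additive fudge in $T$ is absorbed into $\ll_r$ using Lemma~\ref{lem.mult} (each shell $\{T<\|\mu(\ga)\|\le T+O_r(1)\}$ contributes $O_r(1)$) together with the trivial lower bound $\sum_{\|\mu(\ga)\|\le T}e^{-\psi(\mu(\ga))}\ge 1$ coming from $\ga=e$. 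I expect the main technical hinge to be precisely the invocation of Lemma~\ref{lem.par}: it is what prevents a degenerate cross-range for $\mu(\ga')$ of size $\sim 2T$ from creeping in and inflating the right-hand side to a larger power of the Poincar\'e sum. Everything else is either bookkeeping (volume of admissible $a_i$, shell counting) or an immediate application of previously established lemmas.
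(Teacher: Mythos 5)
Your proposal is correct and follows essentially the same route as the paper: restricting to one case of Lemma~\ref{lem.RS}(2) by symmetry, bounding the triple intersection via Lemma~\ref{lem.one}, factoring $e^{-\psi(\mu(\ga_2))}$ using the approximate additivity, reindexing $\ga_2=\ga_1\ga'$, and absorbing the additive fudge with Lemma~\ref{lem.mult}. The only cosmetic difference is that you decouple the constraint on $\|\mu(\ga')\|$ via Lemma~\ref{lem.par}, whereas the paper uses the equivalent elementary observation that the angle between walls of $\fa^+$ is at most $\pi/2$, so $\|\mu(\ga_1)+\mu(\ga')\|\ge\|\mu(\ga')\|$.
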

\begin{proof}
Let $C_0>0$ be as in Lemma \ref{lem.RS}(1) associated to $Q=Q_r$.
Set 
\begin{equation}\label{eq.i1}
E_{\ga_1,\ga_2}:=\left\{(a_1,a_2)\in A_T^+\times A_T^+:
\begin{array}{c}
     Q_r\cap \ga_1 Q_r a_1^{-1}\cap \ga_2 Q_r a_2^{-1}\neq\emptyset\\
     \mu(\ga_2)\approx_{Q_r} \mu(\ga_1)+\mu(\ga_1^{-1}\ga_2)
\end{array}\right\},
\end{equation}
where the implied constant for $\approx_{Q_r}$ is chosen to be the one
 in Lemma \ref{lem.RS}(2) with $Q=Q_r$.
Note that by Lemma \ref{lem.q}, the subset
$E_{\ga_1,\ga_2}$ is contained in some bounded ball around
$(\mu(\ga_1),\mu(\ga_2))$ whose radius depends only on $r$.  Hence
the volume of $E_{\ga_1,\ga_2}$ has a uniform upper bound depending only on $r$. 
Observe that if there exists $(a_1, a_2)\in E_{\ga_1, \ga_2}$,
then $\norm{\mu(\ga_i)}\approx \|\log a\|   \le T$. 
Since the angle between any two walls of $\fa^+$ is at most $\pi/2$, we deuce
$\norm{\mu(\ga_1^{-1}\ga_2)} \le \|\mu(\ga_1) +\mu(\ga_1^{-1}\ga_2)\|
\lesssim_{\,r} \|\mu(\ga_2)\|  \lesssim  T$.

Therefore we get
\begin{align}\label{eq.ub1}
    &\int_{A_T^+}\int_{A_T^+}\sum_{\ga_1,\ga_2\in\Ga} \tilde{\mathsf m}(Q_r\cap \ga_1 Q_r a_1^{-1}\cap \ga_2 Q_r a_2^{-1})\,da_1\,da_2
    \notag\\
    &\leq 2 \int_{A_T^+}\int_{A_T^+}\sum_{\ga_1,\ga_2\in\Ga} \tilde{\mathsf m}(Q_r\cap \ga_1 Q_r a_1^{-1}\cap \ga_2 Q_r a_2^{-1})\mathbbm{1}_{E_{\ga_1,\ga_2}}(a_1,a_2)\,da_1\,da_2
    \notag\\
    &\ll_{\,r} \int_{A_T^+}\int_{A_T^+} \sum_{\ga_1,\ga_2\in\Ga} e^{-\psi(\mu(\ga_2))}\mathbbm{1}_{E_{\ga_1,\ga_2}}(a_1,a_2)\,da_1\,da_2
    \notag\\
    &\ll_{\,r}\sum_{
    \substack{
    \ga_1,\ga_2\in\Ga,\\
    \norm{\mu(\ga_1)} \lesssim_{\,r} T,\\
    \norm{\mu(\ga_1^{-1}\ga_2)} \lesssim_{\,r} T}
    }
    e^{-\psi(\mu(\ga_1))}e^{-\psi(\mu(\ga_1^{-1}\ga_2))}\int_{A_T^+}\int_{A_T^+} \mathbbm{1}_{E_{\ga_1,\ga_2}}(a_1,a_2)\,da_1\,da_2
    \notag\\
    &\ll_{\,r} \bigg(\sum_{
    \substack{
    \ga\in\Ga,\\
    \norm{\mu(\ga)}\lesssim_{\,r}T}
    }
    e^{-\psi(\mu(\ga))}\bigg)^2;
\end{align}
note here that the first inequality follows from Lemma \ref{lem.RS}(2) and the symmetricity of the expression with respect to $\ga_1,\ga_2$.
The second inequality is due to Lemma \ref{lem.one}.
The third inequality is valid again by Lemma \ref{lem.RS}(2).
The last inequality is obtained by reindexing $\ga_1^{-1}\ga_2\in\Ga$ with a new variable.
Finally note that \eqref{eq.ub1} together with Lemma \ref{lem.mult} finishes the proof.
\end{proof}

\begin{prop}\label{prop.sub2}
For all $r>r_0$, we have, for all $T>0$,
$$
\int_{A_T^+}\sum_{\ga\in\Ga} \tilde{\mathsf m}(Q_r\cap \ga Q_r a^{-1})\,da\gg_{r} \sum_{
    \substack{
    \ga\in\Ga,\\
    \norm{\mu(\ga)}\leq T
    }
    }e^{-\psi(\mu(\ga))}.
$$
\end{prop}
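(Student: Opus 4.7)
The plan is a direct lower-bound analogue of the argument in Lemma \ref{lem.one} and Proposition \ref{prop.sub1}: expand $\tilde{\mathsf m}(Q_r\cap\ga Q_ra^{-1})$ in Hopf coordinates, swap the order of integration via Fubini, restrict the outer $(g^+,g^-)$-integration to a pair of opposite shadows around $\ga$, bound the inner $(a,b)$-integral from below using Lemma \ref{lem.lb1}, and then invoke the shadow lemma \ref{lem.shadow} for both $\la_\psi$ and $\la_{\psi\circ\i}$. Concretely, as in the proof of Lemma \ref{lem.one},
$$
\int_{A_T^+}\!\tilde{\mathsf m}(Q_r\cap\ga Q_ra^{-1})\,da = \int_{\cal F^{(2)}}e^{\psi(\cal G(g^+,g^-))}\!\left(\int_{A_T^+}\!\int_A\mathbbm{1}_{Q_r\cap\ga Q_ra^{-1}}(gb)\,db\,da\right)d\la_\psi(g^+)\,d\la_{\psi\circ\i}(g^-).
$$
For each $\ga$ with $\ell_0<\norm{\mu(\ga)}<T-C_2$, I restrict the outer integration to $\Omega_\ga:=\bigl(O_{S_0}(o,\ga o)\times O_{S_0}(\ga o,o)\bigr)\cap\cal F^{(2)}$. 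On $\Omega_\ga$, Lemma \ref{lem.S} gives $\norm{\cal G(g^+,g^-)}<\ell_0$, so $e^{\psi(\cal G(g^+,g^-))}\ge e^{-\norm{\psi}\ell_0}\gg 1$, and Lemma \ref{lem.lb1} gives the uniform lower bound $\int_{A_T^+}\int_A\mathbbm{1}\,db\,da\ge \op{Vol}(A_r)\op{Vol}(A_1^+)\gg_r 1$. Thus the $\ga$-th term is bounded below by a constant times $(\la_\psi\otimes\la_{\psi\circ\i})(\Omega_\ga)$.

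Next I estimate the product shadow mass from below. The shadow lemma \ref{lem.shadow} applied to $\la_\psi$ gives $\la_\psi(O_{S_0}(o,\ga o))\gg e^{-\psi(\mu(\ga))}$. For the opposite shadow, I use $\Ga$-equivariance $O_{S_0}(\ga o,o)=\ga\cdot O_{S_0}(o,\ga^{-1}o)$ together with the $(\Ga,\psi\circ\i)$-conformal relation:
$$
\la_{\psi\circ\i}\bigl(\ga\cdot O_{S_0}(o,\ga^{-1}o)\bigr) = \int_{O_{S_0}(o,\ga^{-1}o)}e^{(\psi\circ\i)(\beta_\xi(e,\ga^{-1}))}\,d\la_{\psi\circ\i}(\xi).
$$
For $\xi\in O_{S_0}(o,\ga^{-1}o)$, a standard Busemann estimate gives $(\psi\circ\i)(\beta_\xi(e,\ga^{-1})) = (\psi\circ\i)(\mu(\ga^{-1}))+O(1) = \psi(\mu(\ga))+O(1)$, while the shadow lemma \ref{lem.shadow} applied to $\la_{\psi\circ\i}$ at $\ga^{-1}$ yields $\la_{\psi\circ\i}(O_{S_0}(o,\ga^{-1}o))\gg e^{-\psi(\mu(\ga))}$. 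The two factors telescope to
$$
\la_{\psi\circ\i}\bigl(O_{S_0}(\ga o,o)\bigr)\gg 1,
$$
uniformly in $\ga$. The non-transverse stratum $(O_{S_0}\times O_{S_0})\setminus\cal F^{(2)}$ is negligible for the product $\la_\psi\otimes\la_{\psi\circ\i}$ by Zariski density, and so is absorbed.

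Combining these estimates gives $\int_{A_T^+}\tilde{\mathsf m}(Q_r\cap\ga Q_ra^{-1})\,da\gg_r e^{-\psi(\mu(\ga))}$ for every $\ga$ with $\ell_0<\norm{\mu(\ga)}<T-C_2$. Summing over such $\ga$, the retained range yields $\sum_{\ell_0<\norm{\mu(\ga)}<T-C_2}e^{-\psi(\mu(\ga))}$, and the missing contributions $\norm{\mu(\ga)}\le\ell_0$ (a finite sum) and $T-C_2\le\norm{\mu(\ga)}\le T$ (uniformly bounded by Lemma \ref{lem.mult}) are absorbed into the implicit constant for $T$ large enough; for bounded $T$ both sides are bounded and the LHS is positive, so the inequality holds trivially after adjusting the constant. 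The principal difficulty in the argument is the dual shadow estimate $\la_{\psi\circ\i}(O_{S_0}(\ga o,o))\gg 1$, which requires transferring the shadow lemma from basepoint $o$ to $\ga o$ via the conformal transformation law rather than applying Lemma \ref{lem.shadow} directly; everything else is essentially a one-sided reading of the computation in Proposition \ref{prop.sub1}.
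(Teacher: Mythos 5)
Your argument is correct and is essentially the paper's own proof: restrict the Hopf-coordinate integral to the product shadow $O_{S_0}(o,\ga o)\times O_{S_0}(\ga o,o)\cap\F^{(2)}$, control the Gromov-product density via Lemma \ref{lem.S}, bound the inner $(a,b)$-integral below by Lemma \ref{lem.lb1}, apply the shadow lemma, and absorb the boundary ranges with Lemma \ref{lem.mult}. The only difference is that you spell out the estimate $\la_{\psi\circ\i}(O_{S_0}(\ga o,o))\gg 1$ via equivariance and the conformal transformation law, a step the paper leaves implicit in its citation of Lemma \ref{lem.shadow}; your derivation of it is the intended one.
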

\begin{proof}
By Lemma \ref{lem.S}, for all $a\in A^+$ and $\ga\in\Ga$ with $\norm{\mu(\ga)}\geq\ell_0$,
\begin{align*}
    &\tilde{\mathsf m}(Q_r\cap \ga Q_r a^{-1})\\
    &\gg\int_{O_{S_0}(o,\ga o)\times O_{S_0}(\ga o,o)\cap\cal F^{(2)}}\int_A \mathbbm{1}_{Q_r\cap \ga Q_ra^{-1}}(gb) \,db\,d\la_\psi(g^+)\,d\la_{\psi\circ\i}(g^-).
\end{align*}
Hence by Lemmas \ref{lem.shadow} and \ref{lem.lb1},
$$
\int_{A_T^+}\sum_{\ga\in\Ga}\tilde{\mathsf m}(Q_r\cap \ga Q_r a^{-1})\,da 
    \gg_{r} \sum_{
    \substack{
    \ga\in\Ga\\
    \norm{\mu(\ga)}\leq T-C_2
    }
    }e^{-\psi(\mu(\ga))}.
$$
This finishes the proof in view of Lemma \ref{lem.mult}.
\end{proof}

\section{Conical limit points and Poincare series}

We begin by recalling:
\begin{lem} \label{divv} \cite[Lem. 7.11]{LO}.
Let $\G<G$ be a Zariski dense discrete subgroup and $\psi\in \fa^*$.
If there exists a $(\Ga,\psi)$-conformal measure $\la_\psi$ with
$\la_\psi(\La_c)>0$, then 
$$\sum_{\ga\in \Ga}e^{-\psi(\mu(\ga))} =\infty.$$
\end{lem}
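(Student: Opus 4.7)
I would argue by contrapositive: assume $\sum_{\ga\in\Ga}e^{-\psi(\mu(\ga))}<\infty$ and derive $\la_\psi(\La_c)=0$. The workhorse is the easy (upper) half of the shadow lemma, which holds for every Zariski dense discrete $\Ga$ and every $(\Ga,\psi)$-conformal measure: for each $S>0$ there is a constant $C_S>0$ such that
$$\la_\psi\bigl(O_S(o,\ga o)\bigr)\le C_S\, e^{-\psi(\mu(\ga))}\qquad\text{for all }\ga\in\Ga.$$
This follows by changing variables along $\ga$, using the conformality relation $\tfrac{d\ga_*\la_\psi}{d\la_\psi}(\xi)=e^{\psi(\beta_\xi(e,\ga))}$, together with the observation that for $\xi\in O_S(o,\ga o)$ the Iwasawa cocycle $\sigma(\ga^{-1},\xi)$ differs from $\mu(\ga)$ by an amount depending only on $S$, which is a direct consequence of the Cartan-decomposition definition of the shadow.

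Summing this bound over $\Ga$ and invoking the convergence hypothesis yields $\sum_{\ga\in\Ga}\la_\psi\bigl(O_S(o,\ga o)\bigr)<\infty$ for every $S>0$. The Borel--Cantelli lemma then produces, for each $S>0$, a $\la_\psi$-null set
$$E_S:=\bigl\{\xi\in\F:\xi\in O_S(o,\ga o)\text{ for infinitely many }\ga\in\Ga\bigr\}.$$
It therefore suffices to show that $\La_c\subset\bigcup_{n\in\N}E_n$, for then $\la_\psi(\La_c)$ is zero as a countable union of null sets, contradicting the hypothesis.

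For the shadow-inclusion, fix $\xi=gP\in\La_c$. By definition there exist $\ga_i\in\Ga$ and $a_i\in A^+$ with $a_i\to\infty$ such that $\ga_i g a_i$ remains in a fixed compact set $Q\subset G$; in particular $\ga_i^{-1} o$ is at bounded distance from $g a_i o$. Writing $g=kp$ with $k\in K$ and $p=m_0 a_0 n_0\in MAN$, the decomposition
$$g a_i = (k m_0)\cdot(a_0 a_i)\cdot(a_i^{-1}n_0 a_i)$$
shows (after passing to a subsequence along which $a_i$ tends to infinity in a direction in $\fa^+$, so that $a_i^{-1}n_0 a_i$ stays bounded) that $g a_i o$ lies within a bounded distance of the Weyl chamber $(km_0)A^+ o$. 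Since $(km_0)^+=k^+=\xi$, we obtain $\xi\in O_S(o,\ga_i^{-1}o)$ for all large $i$ and some $S=S(\xi,Q)$; hence $\xi\in E_S$, as required.

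The step most in need of care is the last one: when the direction of $a_i$ is not regular, the conjugate $a_i^{-1}n_0 a_i$ need not contract, so one must choose the subsequence and enlarge $S$ to absorb the residual bounded error along wall directions. This is the standard rank-one argument from \cite{Rob} adapted to higher rank exactly as in \cite[Lem.\ 7.11]{LO}; no ingredient beyond Zariski density of $\Ga$ and the definition of $\La_c$ is required.
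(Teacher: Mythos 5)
Your argument is correct, and it is essentially the standard convergence-half argument (upper shadow lemma, Borel--Cantelli, and the observation that every conical point lies in infinitely many shadows of a fixed radius) that the paper relies on: the paper does not reprove this lemma but simply cites \cite[Lem.\ 7.11]{LO}, whose proof runs along exactly these lines. One small simplification: for $a_i\in A^+$ and positive roots $\alpha$ one has $\alpha(\log a_i)\ge 0$, so $\mathrm{Ad}(a_i^{-1})$ is non-expanding on $\log N$ and $a_i^{-1}n_0a_i$ stays bounded automatically; no passage to a subsequence or special treatment of wall directions is needed in your last step.
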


The goal of this section is to establish the converse 
for Anosov subgroups:
\begin{prop}\label{div} Let $\Ga<G$ be a Zariski dense Anosov subgroup of $G$.
Let $\psi\in \fa^*$.
If $\sum_{\ga\in \Ga}e^{-\psi(\mu(\ga))} =\infty$, then
for any $(\Ga, \psi)$-conformal measure $\la_\psi$,
we have $$\la_\psi(\La_c)>0.$$
\end{prop}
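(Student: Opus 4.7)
The plan is to deduce the proposition from Proposition \ref{p1} via a second-moment (Paley--Zygmund) argument for the $A^+$-action, in the spirit of Sullivan's rank-one proof. Fix $r>r_0$ so that Proposition \ref{p1} applies, set $Q:=Q_r$, and note that $\tilde{\mathsf m}(Q)<\infty$ since $Q$ is compact and $\tilde{\mathsf m}$ is locally finite. For $T\ge 1$ define the $\Ga$-invariant counting function $F_T:G\to[0,\infty]$ by
\[
F_T(g):=\int_{A_T^+}\sum_{\g\in\Ga}\mathbbm{1}_{\g Qa^{-1}}(g)\,da,
\]
which weights (by Haar measure on $A$) the set of $a\in A_T^+$ for which the $A$-orbit of $[g]\in\Ga\ba G$ visits $[Q]$ at time $a$. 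Let $\Sigma_T:=\sum_{\|\mu(\g)\|\le T}e^{-\psi(\mu(\g))}$; by hypothesis $\Sigma_T\to\infty$.

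By Fubini, the two estimates of Proposition \ref{p1} translate into
\[
\int_Q F_T\,d\tilde{\mathsf m}\;\gg_{\,r}\;\Sigma_T\qquad\text{and}\qquad\int_Q F_T^2\,d\tilde{\mathsf m}\;\ll_{\,r}\;\Sigma_T^2.
\]
Applied on the finite measure space $(Q,\tilde{\mathsf m}|_Q)$, the Paley--Zygmund inequality then yields a constant $c=c(r)>0$, independent of $T$, with
\[
\tilde{\mathsf m}\bigl(\{g\in Q:F_T(g)\ge c\,\Sigma_T\}\bigr)\ge c\qquad\text{for every }T\ge 1.
\]
Since $F_T$ is monotone non-decreasing in $T$, letting $T\to\infty$ gives $\tilde{\mathsf m}(\{g\in Q:F_\infty(g)=\infty\})\ge c>0$. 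For such a $g$, the set of $a\in A^+$ admitting some $\g\in\Ga$ with $\g^{-1}ga\in Q$ must be unbounded in $A^+$ (otherwise $F_\infty(g)$ would be finite); choosing $a_n\in A^+$ with $\|\log a_n\|\to\infty$ and $\g_n\in\Ga$ with $\g_n^{-1}ga_n\in Q$ exhibits $g^+=gP$ as a conical limit point. Hence $\tilde{\mathsf m}(\{g\in Q:g^+\in\La_c\})>0$. Finally, by the Hopf-coordinate description \eqref{eq.BMS0}, the restriction of $\tilde{\mathsf m}$ to any cylinder $\{g^+\in E\}\cap Q$ is given by integrating a positive, locally bounded density against $d\la_\psi(g^+)\,d\la_{\psi\circ\i}(g^-)\,db$; positivity of its mass on $E=\La_c$ thus forces $\la_\psi(\La_c)>0$, as required.

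The principal technical difficulty is entirely hidden in Proposition \ref{p1}: its second-moment upper bound (Proposition \ref{prop.sub1}) depends crucially on the Anosov-specific shadow multiplicity estimate (Lemma \ref{sm}) and on the structural Lemma \ref{lem.RS}, which controls which quadruples $(\g_1,\g_2,a_1,a_2)$ can simultaneously make the triple intersection $Q\cap\g_1Qa_1^{-1}\cap\g_2Qa_2^{-1}$ non-empty. These in turn rely on the antipodality of the Anosov limit set (Lemma \ref{lem.RA}) together with the rank-one-like fact that $A^+\cup w_0A^+w_0^{-1}$ is the only part of $A$ along which an orbit in $\Ga\ba G$ can accumulate (Lemma \ref{proper}). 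Without these features, the double $\Ga$-sum could not be dominated by $\Sigma_T^2$ and the Paley--Zygmund step would collapse; this is precisely why the proposition is restricted to Anosov subgroups.
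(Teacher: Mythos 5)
Your proposal is correct and follows essentially the same route as the paper: the authors also run a second-moment argument (their Lemma \ref{lem.AS}, an Aaronson--Sullivan-type Borel--Cantelli lemma, playing the role of your Paley--Zygmund step) on the correlation estimates of Proposition \ref{p1}, and then identify the points with infinite $A^+$-return time to a compact set as having $g^+\in\La_c$. The only cosmetic difference is that the paper first descends to $\Ga\ba G$ (Proposition \ref{prop.main}, via a finite covering of $Q_r$ by injective pieces) before applying the second-moment lemma, whereas you work directly with $\tilde{\mathsf m}$ on $Q_r\subset G$; both are fine.
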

We will need the following version of the Borel-Cantelli lemma:
\begin{lem}\label{lem.AS} Let 
$(\Omega, \mathsf M)$
be a Borel probability  measure space.
Let $\mathbbm{1}_{P_a}(\om)$ be a jointly measurable function of $(a,\om)\in A^+\times\Om$.
Suppose that  $\int_{A^+}\mathsf M(P_a)\,da=\infty$ and
\begin{equation}\label{eq.inf}
\liminf_{T\to\infty}\frac{\int_{A_T^+}\int_{A_T^+}\mathsf M(P_a\cap P_b)\,db\,da}{\left(\int_{A_T^+}\mathsf M(P_a)\,da\right)^2}\leq C 
\end{equation}
for some $C<\infty$.
Then we have
$$
\mathsf M(\{\om:\int_{A^+}\mathbbm{1}_{P_a}(\om)\,da=\infty\})>\frac{1}{C}.
$$
\end{lem}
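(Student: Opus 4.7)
The plan is to run a classical second-moment (Paley--Zygmund / Kochen--Stone) argument applied to the non-decreasing family of non-negative random variables
$$X_T(\omega) \;:=\; \int_{A_T^+} \mathbbm{1}_{P_a}(\omega)\, da,\qquad T>0,$$
whose pointwise monotone limit is exactly $\int_{A^+}\mathbbm{1}_{P_a}(\omega)\,da$, since $A_T^+$ is increasing in $T$ and exhausts $A^+$. The event in the conclusion is therefore $\{\omega : \sup_T X_T(\omega) = \infty\}$.

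First I would verify measurability and compute the first two moments. Joint measurability of $(a,\omega)\mapsto\mathbbm{1}_{P_a}(\omega)$ and Fubini--Tonelli give
$$E_T \;:=\; \int_\Omega X_T\, d\mathsf{M} \;=\; \int_{A_T^+}\mathsf{M}(P_a)\,da,$$
$$V_T \;:=\; \int_\Omega X_T^2\, d\mathsf{M} \;=\; \int_{A_T^+}\!\int_{A_T^+}\!\mathsf{M}(P_a\cap P_b)\,db\,da.$$
By the two hypotheses, $E_T\to\infty$, and there is a sequence $T_k\to\infty$ along which $V_{T_k}/E_{T_k}^2 \to L$ for some $L\leq C$.

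The key step is the Paley--Zygmund inequality: for every $\lambda\in(0,1)$,
$$\mathsf{M}(X_T \geq \lambda E_T) \;\geq\; (1-\lambda)^2\,\frac{E_T^2}{V_T},$$
obtained from Cauchy--Schwarz applied to $E[X_T\mathbbm{1}_{\{X_T\geq\lambda E_T\}}]\geq (1-\lambda)E_T$. Fix $N\geq 1$; since $E_{T_k}\to\infty$, we have $\lambda E_{T_k} \geq N$ for all large $k$, hence
$$\mathsf{M}(X_{T_k} \geq N)\;\geq\;(1-\lambda)^2\,E_{T_k}^2/V_{T_k}\;\longrightarrow\;(1-\lambda)^2/L.$$
Because $T\mapsto X_T$ is pointwise non-decreasing, the events $\{X_{T_k}\geq N\}$ are nested increasing in $k$, and continuity from below yields $\mathsf{M}(\sup_T X_T \geq N) \geq (1-\lambda)^2/L$. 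Letting $\lambda\to 0$ and using $L\leq C$ gives $\mathsf{M}(\sup_T X_T \geq N) \geq 1/C$ for every $N\geq 1$.

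Finally, the events $\{\sup_T X_T \geq N\}$ are nested decreasing in $N$ with intersection $\{\sup_T X_T = \infty\}$; continuity from above delivers $\mathsf{M}(\sup_T X_T=\infty)\geq 1/C$. There is no serious analytic obstacle here --- all three ingredients (Fubini, Cauchy--Schwarz, and monotone convergence) are routine --- the only modest subtlety is the passage from the subsequence along which $V_T/E_T^2$ approaches the $\liminf$ to the supremum $\sup_T X_T$, which is precisely where monotonicity of $X_T$ in $T$ is used. The strict inequality ``$> 1/C$'' in the statement can be obtained by observing that the chain of inequalities is strictly slack as soon as $\lambda>0$: choosing $\lambda$ small but positive together with a subsequence where $V_{T_k}/E_{T_k}^2$ approaches $L$ from above produces a lower bound of the form $(1-\lambda)^2/L$, which is strictly greater than $1/C$ once $\lambda$ is chosen small enough whenever $L\leq C$ (and a fortiori when $L<C$).
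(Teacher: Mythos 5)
Your argument is correct in substance but follows a genuinely different route from the paper's. You run a Paley--Zygmund / Kochen--Stone second-moment argument on $X_T(\om)=\int_{A_T^+}\mathbbm{1}_{P_a}(\om)\,da$, passing to the limit first over the threshold $\lambda E_{T_k}\ge N$ (continuity from below along an increasing subsequence) and then over $N$ (continuity from above). The paper instead normalizes, setting $b(t,\om)=X_t(\om)/E_t$, observes that $b(t,\om)\to 0$ pointwise off the divergence set $\Om_0$, and uses the $L^2$-boundedness of $b(t_n,\cdot)$ (uniform integrability) to conclude $\int_{\Om_0}b(t_n,\om)\,d\mathsf M\to 1$, after which a single Cauchy--Schwarz on $\Om_0$ gives $\mathsf M(\Om_0)\,C_n\ge\bigl(\int_{\Om_0}b(t_n,\om)\,d\mathsf M\bigr)^2$. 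Both are second-moment arguments and both deliver the bound $\mathsf M(\Om_0)\ge 1/C$; the paper's version avoids the two nested limiting passages at the cost of invoking uniform integrability, while yours is more self-contained and elementary.

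One genuine flaw: your final paragraph claiming the strict inequality $\mathsf M(\Om_0)>1/C$ does not work. For $\lambda>0$ one always has $(1-\lambda)^2/L<1/L$, so the bound $(1-\lambda)^2/L$ exceeds $1/C$ only when $L<C$; in the boundary case $L=C$ (which the hypothesis \eqref{eq.inf} permits) your chain of inequalities gives only quantities strictly \emph{below} $1/C$ for every $\lambda>0$, and only $\ge 1/C$ in the limit $\lambda\to 0$. That said, the paper's own proof also concludes with $\mathsf M(\Om_0)\ge 1/C_\infty\ge 1/C$ and does not actually establish strictness, and only the positivity of $\mathsf M(\Om_0)$ is used in the application (Proposition \ref{div}); so this is a defect of the stated conclusion rather than of your main argument, but you should not present the strictness as proved.
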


\begin{proof} The proof is an easy adaption of \cite{AS}.
Set 
$$
b(t,\om)=\frac{\int_{A_t^+}\mathbbm{1}_{P_a}(\om)\,da}{\int_{A_t^+}\mathsf  M(P_a)\,da}\;\; \text{ and }\;\; \Om_0=\{\om:\int_{A^+}\mathbbm{1}_{P_a}(\om)\,da=\infty\}.
$$
Since $\int_{A^+}\mathsf M(P_a)\,da=\infty$, we have $b(t,\om)\to 0$ as $t\to\infty$ for all $\om\in\Om_0^c$.
On the other hand, by \eqref{eq.inf}, there exists $t_n\to\infty$ such that 
$$
C_n:=\int_\Om b(t_n,\om)^2\,d\mathsf M(\om)\to C_\infty
$$
for some $C_\infty\leq C$, as $n\to\infty$.
Since the family of functions $\om\mapsto b(t_n,\om)$ is uniformly bounded in their $L^2$-norms, they are uniformly integrable.
Hence $\int_{\Om_0^c}b(t_n,\om)\,d\mathsf M(\om)\to 0$ as $n\to\infty$.
Since $\int_\Om b(t,\om)\,d\mathsf M(\om)=1$, it follows that $$\lim_{n\to \infty} \int_{\Om_0}b(t_n,\om)\,d\mathsf M(\om)= 1.$$
By the Cauchy-Schwartz inequality, for all $n\ge 1$,
$$
\mathsf M(\Om_0)C_n=\int_{\Om_0} 1^2\,d\mathsf M(\om) \int_{\Om_0}b(t_n,\om)^2\,d\mathsf M(\om)\geq \left(\int_{\Om_0}b(t_n,\om)\,d\mathsf M(\om)\right)^2.
$$
Therefore 
$$\mathsf M(\Omega_0) \ge \lim_{n\to \infty}\frac{1}{ C_n}
=\frac{1}{C_\infty} \ge \frac{1}{C}$$ and the conclusion follows.
\end{proof}

Let $\mathsf m$
denote the measure on $\Ga\ba G$ induced from $\tilde{\mathsf m}=\tilde m_{\la_\psi,\la_{\psi\circ\i}}^{\BMS}$.
For $r>0$, set $G_r:=KA_r^+K$, $Q_r:=G_rA_r$ and $\mathsf M:= \mathsf m|_{\Ga Q_r}$.
For all $a\in A^+$, set
$$
P_a:=\Ga(Q_r\cap \Ga Q_ra^{-1})\subset\Ga\ba G.
$$
We will prove:
\begin{prop}\label{prop.main}
For all sufficiently large $r>1$, we have, for all $T\ge 1$,
\begin{align*}
    &\int_{A_T^+}\mathsf M(P_a)\,da \gg_r \sum_{
    \substack{
    \ga\in\Ga,\\
    \norm{\mu(\ga)}\leq T
    }
    }e^{-\psi(\mu(\ga))}\text{ and }\\
    &\int_{A_T^+}\int_{A_T^+}\mathsf  M(P_{a_1}\cap P_{a_2})\,da_1\,da_2\ll_r \bigg(\sum_{
    \substack{
    \ga\in\Ga,\\
    \norm{\mu(\ga)}\leq T
    }
    }e^{-\psi(\mu(\ga))}\bigg)^2.
\end{align*}
\end{prop}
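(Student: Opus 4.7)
The plan is to reduce Proposition \ref{prop.main} to Proposition \ref{p1} via an unfolding argument, using a uniform bound on the multiplicity of the $\Ga$-covering $\{\ga Q_r\}_{\ga\in \Ga}$ of $\Ga Q_r$. All the hard analytic content (the shadow lemma, the multiplicity of shadows in Lemma \ref{sm}, and the regularity/antipodality of Anosov subgroups) has already been absorbed into Proposition \ref{p1}, so what remains is essentially elementary bookkeeping.

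The key preparatory bound is a uniform multiplicity estimate. Set $N_{Q_r}(g):=|\{\ga\in\Ga : \ga g\in Q_r\}|$. If $\ga_1 g,\ga_2 g\in Q_r$, then $\ga_1\ga_2^{-1}\in Q_r Q_r^{-1}$, so $N_{Q_r}(g)\le N_0:=|\Ga\cap Q_rQ_r^{-1}|$, which is finite since $\Ga$ is discrete and $Q_rQ_r^{-1}$ is compact. Observing that $g\in\Ga Q_r$ if and only if $N_{Q_r}(g)\ge 1$, this yields the pointwise sandwich $\mathbbm{1}_{\Ga Q_r}\le N_{Q_r}\le N_0\,\mathbbm{1}_{\Ga Q_r}$ on $G$, and note that $N_{Q_r}$ is left $\Ga$-invariant, hence descends to $\Ga\ba G$.

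Next I compare $\mathsf M(P_a)$ and $\mathsf M(P_{a_1}\cap P_{a_2})$ with the sums in Proposition \ref{p1} by unfolding on a Borel fundamental domain $F$ for $\Ga\ba G$. Since $\pi^{-1}(P_a)=\Ga Q_r\cap \Ga Q_ra^{-1}$ and both factors are left $\Ga$-invariant, one obtains
\[
\mathsf M(P_a)=\int_F \mathbbm{1}_{\Ga Q_r}(g)\,\mathbbm{1}_{\Ga Q_r}(ga)\,d\tilde{\mathsf m}(g).
\]
On the other hand, the identity $\sum_\ga \mathbbm{1}_{\ga Q_ra^{-1}}(g)=N_{Q_r}(ga)$ combined with the unfolding formula $\int_G f\,d\tilde{\mathsf m}=\int_F\sum_\ga f(\ga g)\,d\tilde{\mathsf m}(g)$ yields
\[
\sum_\ga \tilde{\mathsf m}(Q_r\cap\ga Q_ra^{-1})=\int_F N_{Q_r}(g)\,N_{Q_r}(ga)\,d\tilde{\mathsf m}(g),
\]
with a completely analogous three-factor identity for $\sum_{\ga_1,\ga_2}\tilde{\mathsf m}(Q_r\cap \ga_1 Q_ra_1^{-1}\cap\ga_2 Q_ra_2^{-1})$ and the matching triple-product expression for $\mathsf M(P_{a_1}\cap P_{a_2})$. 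Applying the sandwich from the previous paragraph factor by factor produces the pointwise inequalities
\[
\mathsf M(P_a)\le \sum_\ga \tilde{\mathsf m}(Q_r\cap\ga Q_ra^{-1})\le N_0^2\,\mathsf M(P_a)
\]
and $\mathsf M(P_{a_1}\cap P_{a_2})\le \sum_{\ga_1,\ga_2}\tilde{\mathsf m}(Q_r\cap \ga_1 Q_ra_1^{-1}\cap \ga_2 Q_ra_2^{-1})$.

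To finish, I integrate the first (left) inequality over $a\in A_T^+$ and the second inequality over $(a_1,a_2)\in A_T^+\times A_T^+$, and invoke the two bounds of Proposition \ref{p1}. This immediately delivers the required lower bound $\int_{A_T^+}\mathsf M(P_a)\,da\gg_r \sum_{\|\mu(\ga)\|\le T}e^{-\psi(\mu(\ga))}$ and the matching upper bound $\int_{A_T^+}\int_{A_T^+}\mathsf M(P_{a_1}\cap P_{a_2})\,da_1\,da_2\ll_r (\sum_{\|\mu(\ga)\|\le T}e^{-\psi(\mu(\ga))})^2$, with implicit constants depending only on $r$ through $N_0=N_0(r)$. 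No substantive obstacle arises here, precisely because every nontrivial ingredient has been pre-packaged in Section 4.
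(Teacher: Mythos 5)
Your proof is correct and follows essentially the same route as the paper: both reduce Proposition \ref{prop.main} to Proposition \ref{p1} (i.e.\ Propositions \ref{prop.sub1} and \ref{prop.sub2}) by comparing $\mathsf M(P_a)$ and $\mathsf M(P_{a_1}\cap P_{a_2})$ with the unfolded sums $\sum_\ga\tilde{\mathsf m}(Q_r\cap\ga Q_ra^{-1})$ up to constants controlled by the multiplicity of the covering $\{\ga Q_r\}_{\ga\in\Ga}$. The only cosmetic difference is that the paper controls this multiplicity by covering $Q_r$ with finitely many pieces injecting into $\Ga\ba G$, whereas you make the same bound explicit via the counting function $N_{Q_r}$ and a fundamental domain.
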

\begin{proof}
Note that for all $a,a_1,a_2\in A^+$,
\begin{align*}
    \mathsf M(P_a)&\gg_r \sum_{\ga\in\Ga}\tilde{\mathsf m}(Q_r\cap\ga Q_ra^{-1})\text{ and}\\
    \mathsf M(P_{a_1}\cap P_{a_2})&\ll_r \sum_{\ga_1,\ga_2\in\Ga}\tilde{\mathsf m}(Q_r\cap\ga_1 Q_ra_1^{-1}\cap\ga_2 Q_ra_2^{-1});
\end{align*}
Indeed, if $Q_r$ is small enough so that it injects to $\Gamma\ba G$, then we have equalities in the above  by the definition of measures. In general, the above inequalities  follow by covering $Q_r$ by finitely many subsets which inject to $\Gamma\ba G$ and the implied constants depend only on
the multiplicity of the covering.
With this observation, the proposition follows from Propositions \ref{prop.sub1} and \ref{prop.sub2}.
\end{proof}

\subsection*{Proof of Proposition \ref{div}.}
By  Proposition \ref{prop.main} and Lemma \ref{lem.AS}, 
the following set has positive $\mathsf m$-measure:
\begin{equation}\label{eq.E}
W_r:=\{[g]\in \Ga Q_r: \int_{A^+}\mathbbm{1}_{\Ga Q_r}(ga)\,da=\infty\}
\end{equation}
for all $r$ large enough.
On the other hand, note that for all $[g]\in W_r$, 
there exists 
$a_i\to\infty$ in $A^+$
such that $[g]a_i$ is bounded; and hence $g^+\in \La_c$.
Therefore $\la_\psi(\La_c)>0.$ This finishes the proof.
$\qed$

\section{Dichotomy theorem for the $A$-action}\label{secc}
We begin by recalling the notion of complete conservativity and dissipativity. Let $H$ be either a countable group or
a connected  closed
subgroup of $A$. We denote by $dh$ the Haar measure on $H$.
Consider the dynamical system $(\Omega, \mu, H)$
where $\Omega$ is a separable, locally compact and $\sigma$-compact topological space on which $H$ acts continuously and $\mu$ is a Radon measure which is quasi-invariant by $H$. 

A Borel subset $B\subset \Omega$ is called wandering
if $\int_H {\mathbbm 1}_B(h. w)dh <\infty$ for $\mu$-almost all $w\in B$. The Hopf decomposition theorem says that $\Omega$ can be written as the disjoint union $\Omega_C\cup \Omega_D$ of $H$-invariant subsets where $\Omega_D$ is a countable union of wandering subsets which is maximal in the sense that $\Omega_C$
does not contain any wandering subset of positive measure. If $\mu(\Omega_D)=0$ (resp. $\mu(\Omega_C)=0$), the system is called completely conservative (resp. dissipative). When 
$(\Omega, \mu, H)$ is ergodic, $H$ is countable and $\mu$ is atom-less, then it is completely conservative (cf. \cite[Thm. 14]{Ka}).

The following is standard (cf. \cite[Proof of Thm . 4.2]{BLLO})
\begin{lem} \label{ccc} Suppose that $\mu$ is $H$-invariant.
Then  $(\Omega, H, \mu)$ is completely conservative if and only if for $\mu$ a.e. $x\in \Omega$,
there exists a compact subset $B_x\subset \Omega$ such that $\int_{h\in H} {\mathbbm 1}_{B_x}(h.x)\; dh =\infty$. 
\end{lem}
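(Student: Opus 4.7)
The plan is to handle the two directions separately using the Hopf decomposition $\Omega=\Omega_C\sqcup\Omega_D$.

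For the forward implication ($\Rightarrow$), I would assume complete conservativity, so $\mu(\Omega_D)=0$. Using $\sigma$-compactness of $\Omega$, fix a compact exhaustion $K_1\subset K_2\subset\cdots$ with $\bigcup_n K_n=\Omega$. The main input is Halmos's recurrence theorem for $H$-invariant $\mu$: on the conservative part, any Borel set $B$ of positive measure satisfies $\int_H\mathbbm{1}_B(h\cdot x)\,dh=\infty$ for $\mu$-a.e. $x\in B$. Applied to each $K_n$, this forces the subset $\{x\in K_n:\int_H\mathbbm{1}_{K_n}(h\cdot x)\,dh<\infty\}$, which is wandering, to be $\mu$-null. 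Taking the union over $n$, for $\mu$-a.e. $x\in\Omega$, any $n$ with $x\in K_n$ yields the required compact $B_x:=K_n$.

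For the reverse implication ($\Leftarrow$), I would argue by contrapositive. Assuming $\mu(\Omega_D)>0$, I will show that for $\mu$-a.e. $x\in\Omega_D$ and every compact $B\subset\Omega$, the sojourn time $\int_H\mathbbm{1}_B(h\cdot x)\,dh$ is finite, contradicting the hypothesis. The tool is the standard cross-section description of the dissipative part: one produces a Borel set $Y\subset\Omega_D$ such that $H\times Y\to\Omega_D$, $(h,y)\mapsto h\cdot y$, is an $H$-equivariant measurable isomorphism (with $H$ acting by left translation on the first factor) and $\mu|_{\Omega_D}$ corresponds to $dh\otimes d\nu$ for some $\nu$ on $Y$. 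Under this identification, for $x=(h_0,y)\in\Omega_D$ and compact $B\subset\Omega$, the sojourn set $\{h\in H:h\cdot x\in B\}$ equals the fiber $\{h:(hh_0,y)\in B\cap\Omega_D\}$, a compact subset of $H$ whose Haar measure is finite.

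The main obstacle is invoking the cross-section description in appropriate generality. When $H$ is countable, the decomposition $\Omega_D=\bigsqcup_{h\in H}h\cdot W$ for a single wandering $W$ makes finiteness of the sojourn time elementary (only finitely many translates $h\cdot W$ meet the compact $B$). When $H$ is a connected closed subgroup of $A$ (hence isomorphic to some $\mathbb{R}^k$), one appeals to Ambrose-Kakutani-type cross-section theorems for locally compact abelian group actions; either way, this covers the settings of interest in the paper.
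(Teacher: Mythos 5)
Your forward direction is correct and is essentially the paper's argument: the set of points all of whose compact sojourn times are finite is itself wandering, hence null under complete conservativity. (You do not even need to invoke Halmos recurrence: if $W=\{x\in K_n:\int_H\mathbbm{1}_{K_n}(h.x)\,dh<\infty\}$, then $W\subset K_n$ gives $\int_H\mathbbm{1}_{W}(h.x)\,dh<\infty$ for every $x\in W$ directly, so $W$ is wandering.)

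The reverse direction has a genuine gap at its final step. Even granting a measurable $H$-equivariant isomorphism $\Omega_D\simeq H\times Y$ carrying $\mu|_{\Omega_D}$ to $dh\otimes d\nu$, that isomorphism is only \emph{measurable}, so the preimage of a compact $B\subset\Omega$ is merely a Borel subset of $H\times Y$, and its fiber over $y$ is a Borel subset of $H$ with no reason to be compact or even of finite Haar measure. The same defect appears in your countable case: a wandering set $W$ is only a measurable set, and nothing forces a compact $B$ to meet only finitely many translates $hW$ --- a measurably dissipative action can still have every orbit topologically dense, and a measurable fundamental domain can be scattered through every open set. So the crucial finiteness $\int_H\mathbbm{1}_B(h.x)\,dh<\infty$ for a.e.\ $x\in\Omega_D$ is asserted rather than proved, and this is precisely the crux of the lemma and the only place where the $H$-invariance of $\mu$ must be used. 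The paper obtains it by a Fubini/double-counting argument that needs no structure theory: given a wandering set $W$ with $0<\mu(W)<\infty$, put $W_n=\{w\in W:\int_H\mathbbm{1}_W(h.w)\,dh\le n\}$ and compute, using the invariance of $\mu$ and unimodularity of $H$, that $\int_{W_n}\int_H\mathbbm{1}_B(h.w)\,dh\,d\mu(w)\le n\,\mu(B)<\infty$ for every compact $B$; hence the sojourn time in $B$ is finite a.e.\ on $W_n$, and letting $n\to\infty$ contradicts the hypothesis applied to a suitable $B$ chosen by $\sigma$-compactness. I would replace the cross-section argument with this computation; it also spares you from justifying Ambrose--Kakutani/Krengel-type representations for $\mathbb{R}^k$-actions, which are not needed here.
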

 \begin{proof} 
 Suppose $(\Omega, H, \mu)$ is completely conservative.
 Suppose that there exists a $\mu$-positive measurable subset $E\subset \Omega$ such that for all $x\in E$, $\int_{h\in H} {\mathbbm 1}_{B}(h.x)\; dh <\infty$ for any compact subset $B$. Then any compact subset of $E$ with positive measure is a wandering set.  This proves the only if direction.
Now suppose that for $\mu$ a.e. $x\in \Omega$,
there exists a compact subset $B_x\subset \Omega$ such that $\int_{h\in H} {\mathbbm 1}_{B_x}(h.x)\; dh =\infty$.
Assume that there exists a wandering set $W \subset \Omega$  with
$0<\mu (W)<\infty$. 
By the $\sigma$-compactness of $\Omega$, there exists
a compact subset $B\subset \Omega$
such that \be\label{mw} \mu \{x\in W:\int_{H} \mathbbm 1_B(h.x) dh=\infty\}\ge \mu (W)/2.\ee

For any $n\in \mathbb N$, set
$ W_n:=\left\{w \in W: \int_{H} \mathbbm{1}_{W}(h.w)\,dh\le n\right\}$.
Fix $n$ such that $\mu(W_n)>\mu(W)/2$.
 For any
compact subset $C\subset H$, we get, using the $H$-invariance of $\mu$,
\begin{align*}
    &\int_{W_n} \int_{C} \mathbbm{1}_B (h.w)\,dh \,d\mu = \int_{C} \int_{W_n} \mathbbm{1}_B (h.w)\,d\mu\, dh \\& 
    =\int_{C} \mu(B \cap h W_n) \,dh =\int_{C} \int_{B \cap HW_n} \mathbbm{1}_{W_n} (h^{-1}.x) d\mu dh \\& = \int_{B \cap HW_n} \int_{C} \mathbbm{1}_{W_n} (h^{-1}.x)\,dh\, d\mu  \le \int_{B \cap H W_n} \int_{H} \mathbbm{1}_{W_n} (h^{-1}.x)\,dh\, d\mu \\ &\le \int_{B \cap HW_n} n \;d\mu\le  n \cdot \mu(B ) < \infty.
\end{align*}

Hence  $\int_{W_n} \int_{H} \mathbbm{1}_B (h.w)\,dh\, d\mu <\infty$; so
$$\mu \{ x\in W: \int_{H} \mathbbm{1}_B (h.w)\,dh  <\infty\}\geq \mu (W_n)>\mu(W)/2,$$
contradicting \eqref{mw}.
\end{proof}

\subsection*{Proof of Theorem \ref{mp}}
The equivalence $(1)\Leftrightarrow (2)$ follows from  \cite[Prop. 2.8, Lem. 4.4]{Q2} and \cite[Prop. 2.10]{PS} (see also \cite[Cor. 7.12]{LO}).

The equivalence $(3)\Leftrightarrow (4)$ follows because the restriction of
$\la_\psi$ to any $\G$-invariant measurable subset is again a $(\G,\psi)$-conformal measure, up to a positive constant multiple, if not-trivial.

The equivalence $(5)\Leftrightarrow (6)$  follows from
the $\G$-equivariant homeomorphism  $\F^{(2)}\simeq G/AM$ and Lemma \ref{ccc}.
More precisely, for any $\Ga$-invariant subset $Z\subset\cal F^{(2)}$, define a $\Ga$-invariant subset $\tilde Z\subset G/M$ by
$$
\tilde Z:=Z\times A\subset\cal F^{(2)}\times A
$$
using the Hopf parametrization $\F^{(2)}\times A\simeq G/M$.
We may view $\tilde Z$ as an $A$-invariant subset of $\Ga\ba G/M$ as well.
It follows from Lemma \ref{ccc} that the assignment $Z\mapsto \tilde Z$ preserves the conservativity (and complete dissipativity) of the action of $\Ga$ on $(\cal F^{(2)},\lambda_\psi\otimes \lambda_{\psi\circ\i}|_{\cal F^{(2)}})$ and the action of $A$ on $(\Ga\ba G/M,m_{\la_\psi, \la_{\psi \circ \i}})$.
The equivalence $(5)\Leftrightarrow (6)$ now follows in view of the Hopf decompositions (see the beginning of Section 6) for the systems $(\cal F^{(2)},\lambda_\psi\otimes \lambda_{\psi\circ\i}|_{\cal F^{(2)}},\Ga)$ and $(\Ga\ba G/M,m_{\la_\psi, \la_{\psi \circ \i}},A)$.

The direction $(3)\Rightarrow (1)$ is proved in \cite[Lem. 7.11]{LO}.

The direction $(1)\Rightarrow (3)$ was shown in Proposition \ref{div}.

For the implication $(4)\Rightarrow (5)$,
we will use that all $(1)-(4)$ are equivalent.
Suppose that $\lambda_\psi(\La_c)=1$, and hence $\psi$ is $\G$-critical.
In this case, see (\cite{Sam}, \cite[Cor. 4.9]{LO}) for the  $AM$-ergodicity of $ m_{\la_\psi, \la_{\psi \circ \i}}$.
Hence $\la_\psi\otimes\la_{\psi\circ\i}|_{\cal F^{(2)}}$ is ergodic.
To prove it is conservative, observe that 
since $\lambda_\psi(\La_c)=1$, 
and  no point in $\La_c$ can be an atom  by Lemma \ref{lem.shadow},
 $\lambda_\psi$ is atom-less.
Therefore 
 $\la_\psi\otimes\la_{\psi\circ\i}|_{\cal F^{(2)}}$ has no atom. This implies
$\la_\psi\otimes\la_{\psi\circ\i}|_{\cal F^{(2)}}$ is conservative by \cite[Thm. 14]{Ka}.  Therefore $(5)$ follows.
  To show $(5)\Rightarrow(4)$,
  suppose that
 $m_{\la_\psi, \la_{\psi \circ \i}}$ is completely conservative and ergodic. 
Fix any compact subset $B\subset \Ga\ba G/M$.
Then by Lemma \ref{proper}, we have for $g\in G$, 
\begin{equation*} \label{eqq} \limsup \Gamma g A M\cap B\ne \emptyset\text{ if and only
if }\limsup \Gamma g (A^+\cup w_0A^+w_0^{-1}) M\cap B\ne \emptyset .\end{equation*}
Therefore, it follows that 
$\max (\la_\psi (\La_c), \la_{\psi\circ \i}(\La_c))>0$.
On the other hand,  Since $\sum_{\ga\in \Ga}e^{-\psi(\mu(\ga))} = \sum_{\ga\in \Ga}e^{-\psi\circ \i(\mu(\ga))} $, the equivalence $(4)\Leftrightarrow (1)$ implies that 
$\min (\la_\psi (\La_c), \la_{\psi\circ \i}(\La_c))>0$.
This proves (4).

If $\lambda_\psi(\La_c)=0$, 
the measure $\la_\psi\otimes\la_{\psi\circ\i}|_{\cal F^{(2)}}$ must be non-ergodic by the previous argument, which shows that any ergodic measure would be conservative, which would then imply $\max (\la_\psi (\La_c), \la_{\psi\circ \i}(\La_c))>0$ and hence  $\la_\psi (\La_c)>0$ by the equivalence of (1) and (3). 
This completes the proof of the equivalence $(4)\Leftrightarrow (5)$.

These establish the equivalence of all (1)-(6).
To see that these are all equivalent to $ (7)$, we recall that
for any $\G$-critical $\psi$, the ergodicity of the $A$-action on $(\cal E_0,  m_{\la_\psi, \la_{\psi \circ \i}}|_{\cal E_0})$ is proved in \cite[Thm. 1.1]{LO2}.
The conservativity (resp. dissipativity) in (6)  and the conservativity in (7) (resp. dissipativity) are equivalent as the projection  $\cal E_0\to \Ga\ba G/M$ has the compact fiber, which is a closed subgroup between $M^\circ$ and $M$. Hence $(\cal E_0, A, m_{\la_\psi, \la_{\psi \circ \i}}|_{\cal E_0})$ is conservative only when $\psi$ is $\G$-critical by $(2)\Leftrightarrow (6)$.
This completes the proof of Theorem \ref{mp}.

We also show the following:
\begin{prop}
If $\psi$ is $\Ga$-critical,
then for any $(\la_\psi, \la_{\psi\circ \i})\in \cal M_\psi\times  \cal M_{\psi\circ\i}$,  the diagonal $\Ga$-action on $(\F \times \F ,  \lambda_\psi\otimes \lambda_{\psi\circ \i} )$ is ergodic and completely conservative.
\end{prop}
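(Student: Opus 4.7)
The plan is to reduce the claim to item $(5)$ of Theorem \ref{mp} by showing that the complement $\F\times\F\setminus\F^{(2)}$ is a $\la_\psi\otimes\la_{\psi\circ\i}$-null set. Since all statements $(1)$--$(7)$ of Theorem \ref{mp} are equivalent, the hypothesis that $\psi$ is $\G$-critical gives $\la_\psi(\La_c)=1$ and $\la_{\psi\circ\i}(\La_c)=1$, and $\La_c=\La$ for Anosov $\G$ by Lemma \ref{lem.RA}. Hence both $\la_\psi$ and $\la_{\psi\circ\i}$ are supported on $\La$, so $\la_\psi\otimes\la_{\psi\circ\i}$ is concentrated on $\La\times\La$.

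First I would use the antipodality property of Anosov subgroups (Lemma \ref{lem.RA}(2)) to observe that
\[
(\La\times\La)\cap (\F\times\F\setminus\F^{(2)})\subset \{(\xi,\xi):\xi\in\La\},
\]
because any pair of distinct limit points lies in $\F^{(2)}$. Next I would observe that $\la_\psi$, and similarly $\la_{\psi\circ\i}$, is atom-less: since $\la_\psi(\La_c)=1$, every point of mass would sit in $\La_c$, but the shadow lemma \ref{lem.shadow} shows that $\la_\psi(O_S(o,\ga o))\ll e^{-\psi(\mu(\ga))}\to 0$ as $\ga\to\infty$, while any $\xi\in\La_c$ is contained in arbitrarily small such shadows. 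This is exactly the atomless argument already carried out in the proof of $(4)\Rightarrow(5)$ of Theorem \ref{mp}.

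With atomlessness in hand, Fubini gives
\[
(\la_\psi\otimes\la_{\psi\circ\i})(\{(\xi,\xi):\xi\in\La\})=\int_{\La}\la_{\psi\circ\i}(\{\xi\})\,d\la_\psi(\xi)=0,
\]
and combined with the antipodality inclusion above, this yields $(\la_\psi\otimes\la_{\psi\circ\i})(\F\times\F\setminus\F^{(2)})=0$. Therefore $\la_\psi\otimes\la_{\psi\circ\i}$ agrees with its restriction $\la_\psi\otimes\la_{\psi\circ\i}|_{\F^{(2)}}$ up to a null set, and both are $\G$-invariant in the quasi-invariant sense dictated by conformality.

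Finally I would invoke the already-established equivalence $(2)\Leftrightarrow(5)$ of Theorem \ref{mp}: since $\psi$ is $\G$-critical, the diagonal $\G$-action on $(\F^{(2)},\la_\psi\otimes\la_{\psi\circ\i}|_{\F^{(2)}})$ is ergodic and completely conservative. Because every $\G$-invariant measurable subset of $\F\times\F$ differs from its intersection with $\F^{(2)}$ by a null set, ergodicity transfers verbatim. Similarly, any wandering set for $\G$ on $\F\times\F$ restricts to a wandering set on $\F^{(2)}$ of the same measure, so complete conservativity also transfers. There is no real obstacle here beyond checking that the exceptional set $\F\times\F\setminus\F^{(2)}$ is negligible, which is precisely where antipodality of the Anosov limit set and atomlessness of the conformal measure do the work.
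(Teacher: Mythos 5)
Your argument is correct and follows essentially the same route as the paper: both reduce the statement to Theorem \ref{mp} by showing that $(\La\times\La)\setminus\F^{(2)}$ is a null set, using antipodality to identify this set with the diagonal and atomlessness of the conformal measure (via the shadow lemma) plus Fubini to conclude it has measure zero. The only cosmetic difference is that the paper cites \cite{LO} for atomlessness where you rederive it from Lemma \ref{lem.shadow}, which is the same underlying argument.
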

\begin{proof} By Theorem \ref{mp}, it suffices to show that
 $$(\la_\psi\times\la_{\psi\circ \op i})((\La \times\La ) - \La^{(2)})=0.$$
Set
$Q:=\La \times\La  - \La^{(2)}$
and $Q(x):=\{y\in\La : (x,y) \in Q\}$ for each $x\in \La$.
By Lemma \ref{lem.RA}(2),
we have
$$Q(x)=\{x\}\quad \text{ for all $x\in\La$}.$$
On the other hand, the conical property of an Anosov subgroup (Lemma \ref{lem.RA}(3)) implies that $\la_\psi$ is not atomic (Prop. 7.4 and Lem. 7.8 of \cite{LO}), and hence $\la_\psi(Q(x))=0$ for all $x\in\La$.
Therefore
\begin{equation}\label{eq.B}
(\la_\psi\times\la_{\psi\circ \op i})(Q)=\int_{x\in \La}\la_\psi(Q(x))\,d\la_{\psi\circ \op i}(x)=0,
\end{equation}
proving the proposition.
\end{proof}
\section{Growth indicator function and Lebesgue measure of $\La$}
We denote by $\rho$ the half sum of all positive roots of $(\fg, \fa)$.
A subset $\mathcal S$ of positive roots is called strongly orthogonal if
any any two distinct roots $\alpha, \beta$
in $\mathcal S$ are strongly orthogonal to each other, i.e., neither of
$\alpha\pm \beta$ is a root.
Let $\Theta$ denote the half sum of all roots in a maximal strongly orthogonal system of $(\fg, \fa)$; this does not depend on the choice of a maximal strongly orthogonal system (see \cite{Oh} where $\Theta$ is explicitly given for each simple algebraic group).

\begin{thm} \label{inf} Let $G$ be a connected simple real algebraic group of rank at least $2$. Let $\Ga<G$ be a discrete subgroup of infinite co-volume. 
Then
$$\psi_\Ga \le 2\rho -\Theta .$$
\end{thm}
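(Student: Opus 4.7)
The plan is to convert the decay of matrix coefficients of the regular representation on $L^2(\Ga \ba G)$ into an upper bound on the orbital counting function, and then unravel the definition of $\psi_\Ga$. The key input is Oh's pointwise bound \cite{Oh}, which sharpens the Harish-Chandra $\Xi$-function by the half-sum $\Theta$ of a maximal strongly orthogonal system. Since $\Ga$ has infinite co-volume, the constant function does not lie in $L^2(\Ga \ba G)$, so the quasi-regular representation $\pi$ has no $G$-invariant unit vector; combined with the absence of rank-one simple factors and property (T) for higher-rank simple groups, this rules out invariants under any non-compact simple factor. Oh's theorem then yields a constant $C_0>0$ and a finite-order Sobolev norm $\cal N$ such that
\[
|\langle \pi(g) F, F\rangle| \le C_0\,\cal N(F)^2\, e^{-\Theta(\mu(g))}
\]
for every smooth compactly supported $F$ on $\Ga \ba G$ and every $g \in G$.

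I would then pick a small symmetric open neighborhood $U$ of $e$ injecting into $\Ga \ba G$, fix a bump $\phi \in C_c^\infty(G)$ supported in $U$ with $\int \phi = 1$, and set $F(\Ga x) := \sum_\ga \phi(\ga x)$. Unfolding the matrix coefficient gives
\[
\langle \pi(a) F, F\rangle = \sum_{\ga \in \Ga} \int_G \phi(y)\,\phi(\ga y a)\,dy, \qquad a \in A^+.
\]
A summand is nonzero only if $\ga \in U a^{-1} U$, which by Lemma \ref{lem.lgl} forces $\mu(\ga) \approx \i(\log a)$ up to a bounded error depending only on $U$. To get a matching lower bound per term, work in exponential coordinates: since $\Ad(a)$ has Jacobian $e^{-2\rho(\log a)}$ on $\fn^-$ and acts trivially on $\fm \oplus \fa$, a direct computation gives $\Vol(U \cap a U a^{-1}) \asymp e^{-2\rho(\log a)}\, \Vol(U)$, and consequently for any $\ga \in U a^{-1} U$ the integral is bounded below by a constant multiple of $e^{-2\rho(\log a)}/\Vol(U)$.

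Combining Oh's upper bound with this per-term lower bound yields the orbital counting estimate
\[
\#\{\ga \in \Ga : \mu(\ga) \in B(H, R)\} \ll e^{(2\rho - \Theta)(H)}
\]
for all $H \in \fa^+$, where $R > 0$ is a fixed constant determined by $U$; here I use $2\rho \circ \i = 2\rho$ and $\Theta \circ \i = \Theta$ (both sides being invariant under the permutation of positive roots induced by the opposition involution). For any open cone $\cal C \subset \fa^+$, covering $\{H \in \cal C : \|H\| \le T\}$ by balls of radius $R$ and summing shows that the Poincar\'e series $\sum_{\ga,\, \mu(\ga)\in\cal C} e^{-t\|\mu(\ga)\|}$ converges whenever $t$ strictly exceeds $\sup_{u \in \cal C,\,\|u\|=1}(2\rho - \Theta)(u)$. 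Shrinking $\cal C$ around a unit vector $u \in \fa^+$ and invoking the definition of $\psi_\Ga$ then yields $\psi_\Ga(u) \le (2\rho - \Theta)(u)$, as claimed.

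The hardest step is extending Oh's matrix coefficient bound, typically stated for $K$-finite vectors, to the smooth bump function $F$: I would decompose $\phi$ into $K$-isotypic components and control the resulting double sum by a finite-order Sobolev norm, or alternatively take $\phi$ to be bi-$K$-invariant by averaging and work with the slightly larger base set $KU$. Either route is routine given Oh's explicit dimension-dependent constants, but this representation-theoretic ingredient is the only place where input beyond the Jacobian calculation is needed.
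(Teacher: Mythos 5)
Your proposal is correct and follows essentially the same route as the paper: Oh's matrix coefficient decay for $L^2(\Ga\ba G)$ (valid since infinite co-volume rules out invariant vectors), unfolded against bump functions near the identity and compared with the Haar density $\asymp e^{2\rho(v)}$ in the Cartan decomposition, then fed into the cone definition of $\psi_\Ga$. The only differences are cosmetic: the paper (in its Proposition \ref{theta0}) integrates the matrix coefficient directly over the truncated cone $K\exp(\cal C_T)K$ with $K$-invariant test functions rather than counting ball-by-ball and covering, which sidesteps the shrink/enlarge-of-neighborhoods and $K$-averaging technicalities you correctly flag as the remaining routine work.
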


\begin{proof}
This is proved by Quint \cite{Q3},
but the above explicit bound was not formulated, although his proof certainly gives that.  We give a slightly different and more direct proof for the sake of completeness.

Note that
 the right translation action of $G$
 on $\Ga\ba G$ gives a unitary representation
 $L^2(\Ga\ba G)$ with no non-zero fixed vector as $\Ga\ba G$ has infinite volume. 
We may then use \cite[Thm. 1.2]{Oh} which gives that
for any $K$-invariant functions
 $f\in L^2(\Ga\ba G)$, any $v\in \fa^+$, and any $\e>0$,
 \be\label{oh} \langle (\exp v) f, f\rangle \le d_\epsilon e^{-(1-\e) \Theta (v)} \|f\|_2^2\ee 
 where $d_\e>0$ depends only on $\e$. Therefore this theorem follows from Proposition \ref{theta0}.
\end{proof}

 \begin{prop}\label{theta0}
Suppose that there exists a function $\theta:\fa^+\to \br$ such that
for any $K$-invariant functions
 $f\in L^2(\Ga\ba G)$, any $v\in \fa^+$, and any $\e>0$,
 \be\label{oh} \langle (\exp v) f, f\rangle \le d_\epsilon e^{-(1-\e) \theta (v)} \|f\|_2^2\ee 
 where $d_\e>0$ depends only on $\e$.
 Then $$\psi_\Ga \le 2\rho -\theta .$$
\end{prop}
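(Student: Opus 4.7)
The approach is a standard conversion between matrix-coefficient decay and lattice-point counts. First I would build a $K$-invariant bump function on $\Gamma\backslash G$ by choosing a small symmetric $K$-biinvariant neighborhood $U\subset G$ of $e$ that injects into $\Gamma\backslash G$, setting $\phi=\mathrm{Vol}(U)^{-1}\mathbbm{1}_U$, and defining $F([g])=\sum_{\gamma\in\Gamma}\phi(\gamma g)$. Then $F$ is nonnegative, $K$-invariant on the right (since $\phi$ is), and $\|F\|_2^2\asymp_U 1$ uniformly in $v$. Unfolding gives
$$
\langle\pi(\exp v)F,F\rangle\;=\;\sum_{\gamma\in\Gamma}\Psi_v(\gamma),\qquad\Psi_v(\gamma):=\int_G\phi(\gamma h\exp v)\phi(h)\,dh,
$$
with each $\Psi_v(\gamma)\ge 0$ and nonzero only when $\gamma\in U\exp(-v)U$, which by Lemma \ref{lem.lgl} forces $\mu(\gamma)$ to lie in a bounded neighborhood of $\iota(v)$.

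The heart of the argument is the uniform lower bound $\Psi_v(\gamma)\gtrsim_U e^{-2\rho(v)}$ for all such $\gamma$. Using $K$-biinvariance of $\phi$ one reduces the integral to (a constant times) the Haar volume of $U\cap \exp(v)U\exp(-v)$; since $\mathrm{Ad}(\exp v)$ contracts each negative root space $\mathfrak{g}_{-\alpha}$ ($\alpha>0$) by the factor $e^{-\alpha(v)}$, expands $\mathfrak{g}_\alpha$ by $e^{+\alpha(v)}$, and fixes $\mathfrak{a}\oplus\mathfrak{m}$, the intersection has volume $\asymp e^{-2\rho(v)}\mathrm{Vol}(U)$, where $2\rho=\sum_{\alpha>0}(\dim\mathfrak{g}_\alpha)\alpha$. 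Combined with the hypothesis this yields
$$
\#\bigl\{\gamma\in\Gamma:\mu(\gamma)\in\iota(v)+O_U(1)\bigr\}\;\le\;C_\epsilon\,e^{2\rho(v)-(1-\epsilon)\theta(v)};
$$
using $\mu(\gamma^{-1})=\iota(\mu(\gamma))$ together with $\rho\circ\iota=\rho$ and $\theta\circ\iota=\theta$, the same bound holds with $v$ replacing $\iota(v)$.

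To pass to $\psi_\Gamma$, fix a unit vector $u\in\mathfrak{a}^+$ and a small open cone $\mathcal{C}\ni u$, cover $\mathcal{C}\cap B(0,T)$ by polynomially many unit balls, and sum the pointwise bound above over their centers. The resulting estimate shows that $\sum_{\gamma\in\Gamma,\,\mu(\gamma)\in\mathcal{C}}e^{-t\|\mu(\gamma)\|}$ converges whenever $t>\sup_{u'\in\mathcal{C}}(2\rho(u')-(1-\epsilon)\theta(u'))$; letting $\mathcal{C}\searrow\{u\}$ and then $\epsilon\to 0$ yields $\psi_\Gamma(u)\le 2\rho(u)-\theta(u)$. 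The main obstacle is the volume estimate producing the $e^{-2\rho(v)}$ Jacobian: it is elementary but requires tracking the effect of conjugation by $\exp v$ on the three types of root spaces in $\mathfrak{g}$, most cleanly carried out in the Bruhat decomposition $G=\bar N MAN$.
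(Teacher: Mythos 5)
Your argument is correct in substance and rests on the same underlying mechanism as the paper's proof --- converting decay of $K$-invariant matrix coefficients into a lattice point count by testing against a bump function on $\Ga\ba G$ and exploiting the $e^{2\rho(v)}$ Jacobian --- but it is organized differently. The paper performs a single smoothed count over the whole cone-ball: it bounds $\#(\Ga\cap B_T)$ by the integral of $F_{T+\e}$ against $\Phi_\e\otimes\Phi_\e$, unfolds, and integrates the bound \eqref{oh} against the Cartan density $\Xi(v)\ll e^{2\rho(v)}$, concluding via the characterization $\psi_\Ga(u)=\inf_{\mathcal C\ni u}\limsup_T T^{-1}\log\#(\Ga\cap K\exp(\mathcal C_T)K)$; this needs only an \emph{upper} bound on the Cartan Jacobian. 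Your route instead extracts a pointwise count $\#\{\ga:\mu(\ga)\in\i(v)+O(1)\}\ll_\e e^{(2\rho-(1-\e)\theta)(v)}$ and sums the Poincar\'e series directly against Definition \eqref{def.GI}; the price is the uniform \emph{lower} bound $\Psi_v(\ga)\gg_U e^{-2\rho(v)}$, which the paper never needs. That bound is correct (the contraction computation in $\bar N MAN$ coordinates is standard), but two points deserve care: (i) to guarantee that \emph{every} $\ga$ with $\mu(\ga)\in\i(v)+O(1)$ is captured with weight $\gg e^{-2\rho(v)}$, you should work with two neighborhoods $\overline{U'}\subset U$ with $U\supset K\exp(B_{\fa}(0,C))K$, since for $u$ near the boundary of $U$ the set $u^{-1}U$ need not contain a uniform neighborhood of $e$; (ii) the limit $\mathcal C\searrow\{u\}$, $\e\to 0$ uses continuity and homogeneity of $\theta$, which is harmless for the intended application $\theta=\Theta$ (a linear form) and is implicitly used in the paper's proof as well. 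Your appeal to $\theta\circ\i=\theta$ is not among the hypotheses but costs nothing: since $\langle (\exp v)f,f\rangle=\langle (\exp \i(v))f,f\rangle$ for real $K$-invariant $f$, the hypothesis for $\theta$ yields it for $\max(\theta,\theta\circ\i)$, so one may assume $\theta$ is $\i$-invariant.
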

\begin{proof} 
 Fix $u\in \fa^+$ be a unit vector such that $\psi_\Ga(u)>0$.
 Fix an open cone $\cal C\subset \fa^+$
 containing $u$, and set $\cal C_T=\{v\in \cal C:
 \|v\|\le T\}$ and $B_{T}=K\exp (\cal C_T) K$ for each $T>1$.
 
 Define $$F_{T}(g, h):=\sum_{\gamma\in \Gamma}\mathbbm 1_{B_T} (g^{-1}\gamma h)$$ which we regard as a function on $\Ga\ba G\times \Ga\ba G$.
 Let $\e>0$.
 Let $U_\e=K U_\e K$ be a symmetric
 open neighborhood of $e$ which injects to $\Ga\ba G$ such that $U_\e B_T U_\e \subset   B_{T+\e} $
 for all $T>1$.
 Let $\Phi_\e$ be a non-negative $K$-invariant continuous function supported in $\Ga\ba \Ga U$ with $\int_{\Ga\ba G} \Phi_\e dx=1$.

 Let $$\eta=\eta_{\cal C}:=\sup \{|2\rho (v) -2\rho (u)| : v\in \cal C, \|v\|=1\}.$$
Using that for $g=k_1 (\exp v ) k_2$,
$dg=\Xi(v) dk_1 dv dk_2$ with 
$\Xi(v)
\asymp
e^{2\rho(v)}$,
and \eqref{oh},
we compute
\begin{align*}
    & \#\Ga\cap B_T =F_T(e,e)\\
     & \le \int_{\Ga\ba G \times \Ga\ba G}
     F_{T+\e} ([g], [h]) \Phi_\e ([g])\Phi_\e ([h]) dg dh
     \\
     &=\int_{\Ga\ba G\times\Ga\ba G}\sum_{\ga\in\Ga}\mathbbm{1}_{B_{T+\e}}(g^{-1}\ga h)\Phi_\e([g])\Phi_\e([h])\,dg\,dh\\
     &=\int_{\Ga\ba G}\int_{G}\mathbbm{1}_{B_{T+\e}}(g^{-1} h)\Phi_\e([g])\Phi_\e([h])\,dg\,dh\\
     &=\int_{\Ga\ba G}\int_{G}\mathbbm{1}_{B_{T+\e}}(g^{-1} )\Phi_\e([h]g)\Phi_\e([h])\,dg\,dh \\
     &=\int_{K\exp (\cal C_{T+\e}) K}\left(\int_{\Ga\ba G}\Phi_\e([h]g)\Phi_\e([h])\,dh\right)\,dg\\
     &\asymp
\int_{v\in \cal C_{T+\e}} \langle \exp v. \Phi_\e, \Phi_\e \rangle 
     e^{2\rho (v) }dv \\\
     &
     \le d_\e \int_{v\in \cal C_{T+\e}} 
     e^{(2\rho-(1-\epsilon) \theta)  (v) }dv 
     \cdot\norm{\Phi_\e}_{2}^2
     \\
     &\le  d_\e \int_0^{T+\e} \int_{v\in \cal C, \|v\|=1} 
     e^{(2\rho-(1-\epsilon) \theta)  (tv) } dv dt 
      \cdot\norm{\Phi_\e}_{2}^2
     \\ 
     & \ll   e^{(2\rho-(1-\epsilon) \theta)  ((T+\e) u)+ 2 (T+\e) \eta }
 \end{align*}
 
 where the implied constants are independent of $T>1$.
 Therefore
 $$\limsup_{T\to \infty} \frac{\log \# (\Ga\cap B_T)}T \le  (2\rho - \theta)(u) +\epsilon \theta(u) +2\eta.$$ 
 On the other hand,
 when $\psi_\Ga(u)>0$,
 $$\psi_\Ga(u)= \inf_{u\in \cal C} \limsup_{T\to \infty} \frac{\log \# (\Ga\cap K\exp (\cal C_T) K)}T$$ 
 where the infimum is taken over all open cones $\cal C$ containing $u$.
 Since $\eta=\eta_{\cal C}\to 0$ as $\cal C$ shrinks to the ray $\br_+u$,
 we get $$\psi_\Ga (u)\le  (2\rho - \theta)(u) +\epsilon \theta(u) .$$
 Since $\e>0$ was arbitrary,
 this implies
 $$\psi_\Ga (u)\le (2\rho-\theta)(u)$$
 as desired.
\end{proof}

\begin{rmk} \begin{enumerate}
    \item Corlette's theorem \cite{Co} shows a uniform gap theorem as above for rank one groups with property (T).
\item We remark that in a recent work \cite{KMO}, a stronger bound $\psi_\Ga\le \rho$ was conjectured for $\Ga$ Anosov.
\end{enumerate}
\end{rmk}

A connected simple real algebraic group is isomorphic to one of the following groups:
$\SO(n,1), \op{SU}(n,1), \op{Sp}(n,1), F_4$, which are groups of isometries
of real, complex, quarternionic hyperbolic spaces and the Cayley plane respectively.
If $X$ denotes the corresponding Riemannian
symmetric space as listed above, the Hausdorff dimension of $\partial X$ with respect to the Riemannian metric is given by
$k(n+1)-2$ where $k=1,2,4$, and $22$ respectively(\cite{Co}, \cite{Mi})
; they are equal to the volume entropy $D_X$ of $X$ with respect to a properly normalized Riemannian metric on $X$.

The following theorem is well-known due to the works of Sullivan (\cite{Su}, \cite{Su2}), Corlette \cite{Co} and Corlette-Iozzi \cite{CI}.
\begin{thm} \label{one} Let $G$ be a connected simple algebraic group of rank one. Let $\Ga<G$ be a convex cocompact subgroup such that $\Ga\ba G$ is not compact.
Then $$\dim_H(\La) <\dim_H(\partial X).$$
where $\op{dim}_H$ denotes the Hausdorff dimension with respect to the Riemannian metric on $\partial X$.
\end{thm}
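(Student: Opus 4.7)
The plan is to reduce the claim to the strict inequality $\delta_\Ga < D_X$ on critical exponents. In rank one $\fa$ is one-dimensional, and for $\Ga$ convex cocompact (hence Anosov with $\La = \La_c$) the classical Patterson-Sullivan shadow lemma and its rank-one extensions identify $\dim_H \La = \delta_\Ga$ with respect to the natural visual metric on $\partial X$, while $\dim_H \partial X = D_X = k(n+1)-2$. Since $\Ga$ is convex cocompact without cusps and $\Ga\ba G$ is non-compact, $\Ga$ is non-lattice, so $\Ga\ba G$ has infinite volume, $L^2(\Ga\ba G)$ has no non-zero $G$-invariant vector, and moreover $\La$ is a proper closed subset of $\partial X$.

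For $G = \op{Sp}(n,1)$ or $F_{4(-20)}$, which possess Kazhdan's property (T), I would mimic the proof of Proposition \ref{theta0}. Corlette's uniform spectral gap theorem \cite{Co} combined with Howe-Moore-type estimates yields
\[
\langle (\exp v) f, f\rangle \le d_\epsilon\, e^{-(1-\epsilon)\eta\|v\|}\,\|f\|_2^2
\]
for all $K$-invariant $f\in L^2(\Ga\ba G)$ and $v\in\fa^+$, with some $\eta>0$ depending only on $G$. Applying Proposition \ref{theta0} with $\theta(v)=\eta\|v\|$ gives $\psi_\Ga \le 2\rho - \eta\|\cdot\|$, and evaluating at the unit vector $u\in\fa^+$ (where $2\rho(u)=D_X$) yields $\delta_\Ga \le D_X - \eta < D_X$.

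For $G = \SO(n,1)$ or $\op{SU}(n,1)$, which lack property (T), I would argue by contradiction using Theorem \ref{main}. Suppose $\delta_\Ga = D_X$, so that $D_X$ is $\Ga$-critical. The normalized Lebesgue measure on $\partial X$ (the Cygan measure in the complex case) is $G$-conformal of dimension $2\rho = D_X$, hence is in particular a $(\Ga, D_X)$-conformal measure. Theorem \ref{main} then forces Lebesgue$(\La) = 1$. But $\partial X - \La$ is a non-empty open subset of $\partial X$ and therefore has positive Lebesgue measure, a contradiction.

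The main obstacle lies in the $\SO(n,1)$ and $\op{SU}(n,1)$ case: the absence of property (T) rules out any uniform spectral-gap argument, and one must instead exploit the conformal action of $G$ on $\partial X$ (M\"obius in the real case, CR-automorphisms in the complex case) to produce the Lebesgue-type conformal measure of dimension $2\rho$ and to invoke Theorem \ref{main}. A secondary subtlety is the identification $\dim_H \La = \delta_\Ga$ in the non-real rank-one cases, where the boundary carries the Cygan/Carnot-Carath\'eodory structure rather than a round sphere; this is the content of the classical work of Sullivan, Corlette, and Corlette-Iozzi \cite{Su,Su2,Co,CI}.
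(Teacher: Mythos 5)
Your proposal is correct in substance but proves the key inequality $\delta < D_X$ by a genuinely different route from the paper. The paper's argument is uniform over all four rank-one families and stays entirely within classical rank-one spectral theory: by Corlette--Iozzi, $\dim_H(\La)=\delta$ and the bottom of the $L^2$-spectrum is $\lambda_0=\delta(D_X-\delta)$, so $\delta=D_X$ would force $\lambda_0=0$; by Corlette and Sullivan the harmonic function realizing the bottom of the spectrum is then unique and square-integrable, and since constants are harmonic this makes $\vol(\Ga\ba X)$ finite, hence $\Ga\ba X$ compact by convex cocompactness --- a contradiction. You instead split into cases: a property (T) matrix-coefficient gap fed into Proposition \ref{theta0} for $\Sp(n,1)$ and $F_4$, and an appeal to Theorem \ref{main} (equivalently, $(2)\Rightarrow(4)$ of Theorem \ref{mp} together with $\La=\La_c$ and $\La\neq\F$) for $\SO(n,1)$ and $\SU(n,1)$. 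Both prongs are sound and non-circular: Theorem \ref{main} rests only on Theorem \ref{mp} and the uniqueness result of \cite{ELO}, neither of which uses Theorem \ref{one}. In fact your second prong applies verbatim to all four families, so the case division (and the property (T) input) is unnecessary. What the paper's route buys is logical independence of Theorem \ref{one} from the paper's main machinery and no Zariski-density hypothesis; what your route buys is a shorter proof internal to the framework already developed.

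One caveat you should address: Theorems \ref{main} and \ref{mp} are stated only for \emph{Zariski dense} Anosov subgroups, whereas Theorem \ref{one} allows an arbitrary convex cocompact $\Ga$, e.g.\ a Fuchsian subgroup of $\PSL_2(\c)$. For non-Zariski-dense $\Ga$ your dichotomy argument does not directly apply and a separate (easy) reduction is needed --- for instance, that $\La$ then lies in the limit set of the Zariski closure, which sits in the boundary of a proper totally geodesic subspace and so has Hausdorff dimension strictly below $D_X$. Your remaining assertions --- that $\La\neq\F$ because $\La=\F$ would force cocompactness, and that $\dim_H(\La)=\delta$ via Sullivan and Corlette--Iozzi --- are correct and coincide with what the paper uses.
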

\begin{proof} Let $\delta$ denotes the critical exponent of $\Ga$.
By \cite[Thm. 6.1, Cor. 6.2]{CI}, 
$\delta$ is equal to $\dim_H(\La)$ and the bottom, say, $\lambda_0$ of the $L^2$-spectrum of the negative Laplacian is given by $\delta( D_X-\delta).$ Now suppose that $\delta=D_X$.
By (\cite[Thm. 5.5]{Co}, \cite{Su2}), there exists a unique harmonic function on $\Ga\ba X$, and it is square-integrable. Since the constant function is a harmonic function,
it follows that $\Ga\ba X$ has finite volume, and hence compact,
as $\Ga$ is assumed to be convex cocompact. This proves the claim.
\end{proof}

We now deduce Theorem \ref{null} from Theorems \ref{mp} and \ref{inf}.
\subsection*{Proof of Theorem \ref{null}} Let $\G<G$ be Zariski dense and Anosov.
If $\text{rank} G=1$ and $\G<G$ is cocompact, then it is immediate that $\La=\mathcal F$.
We now suppose that $\Ga$ is not a cocompact lattice in a rank one group $G$. We claim that the Lebesgue measure of
$\La$ is zero. We write $G=G_1 G_2$ where $G_1$ is a product of all simple factors of rank one, and $G_2$ is a product of all simple factors of rank at least $2$.
Consider first the case when $G_2$ is trivial. Then $\Ga$ is of the form:
$\Ga=\big(\prod_{i=1}^k \pi_i \big)(\Sigma) $ where
$\Sigma$ is a Gromov hyperbolic group and $\pi_i$ is a convex cocompact representation of $\Sigma$ into a rank one simple factor of $G$. If $k=1$, it follows from Theorem \ref{one}.
If $k\ge 2$, then the Hausdorff dimension of 
$\La$ is at most the maximum of the Hausdorff dimension of the boundary of a rank one factor of $G$ (cf. proof of \cite[Theorem 3.1]{KMO2}); therefore it is strictly smaller than the Hausdorff dimension of $G/P$.
Hence the Lebesgue measure of $\La$ is zero.
Now suppose that $G_2$ is not trivial. Let $p:G\to G_2$ denote the canonical projection.  By the Anosov property of $\Ga$,
the projection $p(\Ga)<G_2$ is again an Anosov subgroup.
It suffices to prove that the limit set of $p(\Ga)$ has Lebesgue measure zero. Therefore, we may assume without loss of generality that $G=G_2$ and $G_2$ is simple.
Since $\Ga$ has infinite co-volume in $G$, as $\pi(\Ga)$ is Gromov hyperbolic,  it follows from Theorem \ref{inf} that the growth indicator  function $\psi_\Ga$ of $\Ga$ satisfies $\psi_\Ga<2\rho$, i.e., $2\rho$ is not $\Ga$-critical.
 Since the Lebesgue measure on $\F$
 is the $(G, 2\rho)$-conformal measure,  Theorem \ref{mp} implies the claim.

\begin{rmk}
Note that it is the consequence of Theorem \ref{null} that $\psi_\Ga<2\rho$ for all Anosov subgroups of $G$ which is not cocompact in $G$.
\end{rmk}

For a general discrete subgroup $\G<G$, we record the following:
\begin{prop}\label{ds}  If $\G<G$ is a discrete subgroup
with $\psi_\Ga<2\rho$, then the Lebesgue measure of the conical limit set $\La_c$ is zero.
In particular, if  $\G$ and $G$ are as in Theorem \ref{inf}, $\op{Leb}(\La_c)=0$.
\end{prop}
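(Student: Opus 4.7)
The strategy is to observe that the Lebesgue measure $\op{Leb}$ on $\F = G/P$ is a $(\G, 2\rho)$-conformal measure for every discrete subgroup $\G < G$ (since it is, up to scaling, the unique $G$-conformal measure of dimension $2\rho$), and then to deduce the conclusion from the convergence side of Lemma \ref{divv} applied with $\psi = 2\rho$. The first task is therefore to verify that the hypothesis $\psi_\G < 2\rho$ forces convergence of the Poincar\'e series $\sum_{\g \in \G} e^{-2\rho(\mu(\g))}$.

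For that convergence, the plan is to use the very definition of $\psi_\G$ as an infimum of abscissae of convergence $\tau_{\cC}$ over open cones $\cC$. For each unit $u \in \fa^+$, upper semicontinuity of $\psi_\G$ at $u$ together with continuity of $2\rho$ produce an open cone $\cC_u \ni u$ and a real number $t_u$ with
$$\tau_{\cC_u} < t_u < \inf_{v \in \cC_u,\, \|v\|=1} 2\rho(v),$$
so that $\sum_{\g : \mu(\g) \in \cC_u} e^{-2\rho(\mu(\g))} \le \sum_{\g : \mu(\g) \in \cC_u} e^{-t_u \|\mu(\g)\|} < \infty$. Compactness of the unit sphere in $\fa^+$ extracts a finite subcover by such cones, converting these partial bounds into convergence of the full series. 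Lemma \ref{divv}, applied to the $(\G, 2\rho)$-conformal measure $\op{Leb}$, then yields $\op{Leb}(\La_c) = 0$.

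For the ``in particular'' clause, Theorem \ref{inf} gives $\psi_\G \le 2\rho - \Theta$; since $\Theta$ is a half sum of positive roots in a maximal strongly orthogonal system, it is strictly positive on $\inte \fa^+$, and combined with $\psi_\G \equiv -\infty$ off the limit cone $\L$, this produces the strict inequality $\psi_\G < 2\rho$ needed to feed the first part. The only real subtlety, rather than a true obstacle, is that Lemma \ref{divv} is stated for Zariski dense $\G$, whereas Proposition \ref{ds} is stated for arbitrary discrete $\G$; for non-Zariski-dense $\G$ the cleanest fix is to replace the appeal to Lemma \ref{divv} by a direct first Borel--Cantelli argument, using the shadow-type estimate $\op{Leb}(O_S(o, \g o)) \ll e^{-2\rho(\mu(\g))}$ that holds for the $G$-conformal measure $\op{Leb}$ together with the observation that every point of $\La_c$ lies in infinitely many shadows $O_S(o, \g o)$.
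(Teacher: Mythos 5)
Your argument is correct and takes essentially the same route as the paper: the paper likewise treats $\op{Leb}$ as the $(\Ga,2\rho)$-conformal measure, deduces $\sum_{\ga\in\Ga}e^{-2\rho(\mu(\ga))}<\infty$ from $\psi_\Ga<2\rho$ (citing Quint's Lemma III.1.3, which you instead reprove by the cone-covering argument), and concludes via Lemma \ref{divv}. Your remark about the Zariski density hypothesis in Lemma \ref{divv}, together with the shadow/Borel--Cantelli fix for $\op{Leb}$, is a sensible extra precaution but does not change the substance of the argument.
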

\begin{proof} 
If $\psi_\Ga <2\rho$, then
 $\sum_{\ga\in \Ga}e^{-2\rho (\mu(\ga))} <\infty$ by \cite[Lem. III 1.3]{Q4}.
 By \cite[Lem. 7.11]{LO} (Lemma \ref{divv}),
 this implies that 
 $\op{Leb}(\La_c)=0$. 
\end{proof}


\begin{thebibliography}{10}

 \bibitem{AS} J. Aaronson and D. Sullivan.
\newblock{\em Rational ergodicity of geodesic flows.}
\newblock{Ergodic Theory Dynam. Systems. 4 (1984), 165-178.}

 \bibitem{Ag} I. Agol.
\newblock{\em Tameness of hyperbolic 3-manifolds.}
\newblock{arXiv:math 0405568}

 \bibitem{Ah} L. Ahlfors. 
 \newblock{\em Finitely generated Kleinian groups.}
\newblock{Amer. J. Math. 86 (1964), 413–
429.}


\bibitem{Ben} Y.\ Benoist.
\newblock{\em Propri\'et\'es asymptotiques des groupes lin\'aires.}
\newblock{Geom. Funct. Anal. 7 (1997), no. 1, 1-47.}


\bibitem{BPS} J. Bochi, R. Potrie and A. Sambarino.
\newblock{\em Anosov representations and dominated splittings.}
\newblock{J. Eur. Math. Soc. (JEMS) 21 (2019), 3343-3414.}

\bibitem{BH} M. Bridson and A. Haefliger.
\newblock{\em Metric spaces of non-positive curvature.
Grundlehren der mathematischen Wissenschaften}
\newblock{319. Springer-Verlag, Berlin, 1999.}

\bibitem{BLLO} M. Burger, O. Landesberg, M. Lee and H. Oh.
\newblock{\em The Hopf-Tsuji-Sullivan dichotomy in higher rank and applications to Anosov subgroups.}
\newblock{J. Mod. Dyn. Vol 19 (2023), pp. 301-330}

\bibitem{CG} D. Calegari and D.  Gabai.
\newblock{\em Shrinkwrapping and the taming of hyperbolic 3-manifolds.}
\newblock{J. Amer. Math. Soc. 19 (2005): 385–446.} 

\bibitem{Ca} R. Canary.
\newblock{\em Ends of hyperbolic 3-manifolds.}
\newblock{J. Amer. Math. Soc.  6 (1993): 1–35.}


\bibitem{Co} K. Corlette.
\newblock{\em Hausdorff dimensions of Limit sets I.}
\newblock{Invent. Math. 102 (1990), 521-541.}



\bibitem{CI} K. Corlette and A. Iozzi.
\newblock{\em Limit sets of discrete groups of isometries of exotic hyperbolic spaces.}
\newblock{Trans. Amer. Math. Soc. Vol 351 (1999), 1507--1530.}


\bibitem{Eb} P. Eberlein.
\newblock{\em Geometry of nonpositively curved manifolds. }
\newblock{Chicago Lectures in Mathematics. University of Chicago Press, Chicago, IL, 1996.}

\bibitem{ELO} S. Edwards, M. Lee and H. Oh.
\newblock{\em Anosov groups: local mixing, counting, and equidistribution.}
\newblock{Geom. Topol. 27 (2023), 513-573}

\bibitem{ELO2} S. Edwards, M. Lee and H. Oh. 
\newblock{\em Uniqueness of conformal measures and local mixing
for Anosov groups.}
\newblock{Michigan Math. J. 72 (Prasad volume), 243-259}

\bibitem{EO} S. Edwards and H. Oh. 
\newblock{\em Temperedness of $L^2(\Gamma\ba G)$ and positive eigenfunctions in higher rank.} \newblock{Preprint, arXiv:2202.06203.}


\bibitem{FG} M. Fraczyk and T. Gelander.
\newblock{\em Infinite volume and injectivity radius.}
\newblock{Ann. of Math. (2) 197(1) (2023), 389-421}


\bibitem{GGKW} F. Gu\'eritaud, O. Guichard, F. Kassel and A.  Wienhard.
\newblock{\em Anosov representations and proper actions.}
\newblock{Geom. Topol. 21 (2017), no. 1, 485-584.}

\bibitem{GW} O. Guichard and A. Wienhard.
\newblock{\em Anosov representations: Domains of discontinuity and applications.}
\newblock{Invent. Math. Volume 190, Issue 2 (2012), 357--438.}


\bibitem{Hopf} E. Hopf.
\newblock{\em Ergodic theory and the geodesic flow on surfaces of constant negative curvature}
\newblock{Bull. Amer. Math. Soc. (N.S.) 77 (1971), 863-877.}  


\bibitem{Ka} V. Kaimanovich
\newblock{\em Hopf decomposition and Horospheric limit sets.}
\newblock{Ann. Acad. Sci. Fenn. Math. 35 (2010),
335-350.}

\bibitem{KL} M. Kapovich and B. Leeb.
\newblock{\em Discrete isometry groups of symmetric spaces.}
\newblock{Handbook of group actions. Vol. IV, 191-290, Adv. Lect. Math. (ALM), 41, Int. Press, Somerville, MA, 2018.}

\bibitem{KLP1} M. Kapovich, B. Leeb, and J. Porti. \newblock{\em Anosov subgroups: dynamical and geometric characterizations.}
\newblock{Eur. J. Math. 3 (2017), no. 4, 808-898.}
   
\bibitem{KLP2} M. Kapovich, B. Leeb and J. Porti.
\newblock{\em A Morse lemma for quasigeodesics in symmetric spaces and Euclidean buildings.}
\newblock{Geom. Topol. 22 (2018), no. 7, 3827-3923.}



\bibitem{KMO} D. Kim, Y. Minsky and H. Oh.
\newblock{\em Tent property of the growth indicator functions and applications.} \newblock{Preprint, arXiv:2112.00877}


\bibitem{KMO2} D. Kim, Y. Minsky and H. Oh.
\newblock{\em Hausdorff dimension of directional limit sets for self-joinings of hyperbolic manifolds.} \newblock{J. Mod. Dyn. Vol 19 (2023), 433-453 }


\bibitem{La} F. Labourie.
\newblock{\em Anosov flows, surface groups and curves in projective space.}
\newblock{Invent. Math. 165 (2006), no. 1, 51--114.}


\bibitem{LaO} O. Landesberg and H. Oh. 
\newblock{\em On denseness of horospheres in higher rank homogeneous spaces.}
\newblock{Preprint (arXiv:2202.05044).}

\bibitem{LO} M. Lee and H. Oh. 
\newblock{\em Invariant measures for horospherical actions and Anosov groups.}
\newblock{Preprint, arXiv: 2008.05296, To appear in Int. Math. Res. Not. IMRN}

\bibitem{LO2} M. Lee and H. Oh.
\newblock{\em Ergodic decompositions of geometric measures on Anosov homogeneous spaces.}
\newblock{Preprint, arXiv:2010.11337, To appear in Israel J. Math.}  




\bibitem{Mi} J. Mitchell.
\newblock{\em On Carnot-Caratheodory metrics.}
\newblock{J. Differential Geom. 21, 35-45 (1985).}

\bibitem{Ni} P. Nicholls.
\newblock{\em The ergodic theory of discrete groups.}
\newblock{London Math. Soc. Lecture Notes,
vol. 143, Cambridge Univ. Press, Cambridge and New York, 1989.}


\bibitem{Oh}  H. Oh.\newblock{\em Uniform pointwise bounds for matrix coefficients of unitary representations and applications to Kazhdan constants.}\newblock{Duke Math. J.  113 (2002), no. 1, 133--192.}

\bibitem{Pa} S. Patterson.
\newblock{\em The limit set of a Fuchsian group.} \newblock{Acta Math.  136 (1976),  241--273.}

\bibitem{PS} R. Potrie and A. Sambarino.
\newblock{\em Eigenvalues and entropy of a Hitchin representation. }
\newblock{Invent. Math. 209 (2017), no. 3, 885-925.}

\bibitem{PSW} B. Pozzetti, A. Sambarino and A. Wienhard.
\newblock{\em Anosov representations and with Lipschitz limit set.}
\newblock{Preprint, arXiv:1910.06627}


\bibitem{Q} J.-F. Quint.
\newblock{\em Mesures de Patterson-Sullivan en rang superieur.}
\newblock{Geom. Funct. Anal.} 12 (2002), p. 776--809.

\bibitem{Q2} J.-F. Quint.
\newblock{\em L'indicateur de croissance des groupes de Schottky.}
\newblock{Ergodic Theory Dynam. Systems. Systems 23 (2003), no. 1, 249-272.}

\bibitem{Q3} J.-F. Quint. \newblock{\em Propri\'et\'e de Kazhdan et sous-groupes discrets de covolume infini.}\newblock{Travaux mathématiques 14 (2003), 143-151.}



\bibitem{Q4}
J.-F. Quint.
\newblock{Divergence exponentielle des sous-groupes discrets en rang sup\'erieur.}
\newblock{\em Comment. Math. Helv. 77 (2002), no. 3, 563-608.}
\bibitem{Q7}  J.-F. Quint. 
\newblock{\em An overview of Patterson-Sullivan theory } 
Lecture notes available at math.u-bordeaux.fr/~jquint/

\bibitem{Rob} T. Roblin.
\newblock{\em Ergodicit\'e et \'equidistribution en courbure n\'egative. }
\newblock{M\'em. Soc. Math. Fr. (N.S.) No. 95 (2003), vi+96.}

\bibitem{Sam} A. Sambarino. 
\newblock{\em The orbital counting problem for hyperconvex representations. }
\newblock{ Ann. Inst. Fourier (Grenoble) 65(2015), no. 4, p. 1755-1797.}

\bibitem{Su} D. Sullivan.
\newblock{\em The density at infinity of a discrete group of hyperbolic motions.}
\newblock{Publ. IHES. No. 50 (1979), 171--202.}

\bibitem{Su1} D. Sullivan. 
\newblock{\em On the ergodic theory at infinity of an arbitrary discrete group of hyperbolic motions.}
\newblock{Ann. of Math. Stud. 97 (1979), 465-496.}


\bibitem{Su2} D. Sullivan.
\newblock{\em Related aspects of positivity in Riemannian geometry.}
\newblock{J. Differential Geom. 25 (1987), 327--351}


 \bibitem{Ts} M. Tsuji. 
 \newblock{\em Potential theory in modern function theory.}
 \newblock{ Maruzen Co. Ltd: Tokyo, 1959}.


\end{thebibliography}
\end{document}